\definecolor{darkgreen}{rgb}{0,0.45,0}
\definecolor{darkred}{rgb}{0.75,0,0}
\definecolor{darkblue}{rgb}{0,0,0.6}
\newcommand{\adjRelayII}[3][2.2em]{\ensuremath{\SelectTips{cm}{10}\xymatrix@C=#1@1{{#2} \ar@<1ex>[r]^-{\ArgI}^-{}="1" & {#3} \ar@<1ex>[l]^-{\ArgII}^-{}="2" \ar@{}"1";"2"|(.3){\hbox{}}="7" \ar@{}"1";"2"|(.7){\hbox{}}="8" \ar@{|-} "8" ;"7"}}}
\theoremstyle{plain}
\newtheorem{theorem}{Theorem}[section]
\newtheorem{proposition}[theorem]{Proposition}
\newtheorem{lemma}[theorem]{Lemma}
\newtheorem{corollary}[theorem]{Corollary}
\theoremstyle{definition}
\newtheorem{definition}[theorem]{Definition}
\newtheorem{example}[theorem]{Example}
\newtheorem{notation}[theorem]{Notation}
\newtheorem{remark}[theorem]{Remark}
\newcommand{\Gpd}{\mathsf{Gpd}} 
\newcommand{\C}{\mathscr{C}} 
\newcommand{\D}{\mathscr{D}} 
\newcommand{\E}{\mathcal{E}}
\newcommand{\ff}{\mathcal{F}}
\newcommand{\Ll}{\mathcal{L}}
\newcommand{\M}{\mathcal{M}}
\newcommand{\Rr}{\mathcal{R}}
\newcommand{\U}{\mathcal{U}} 
\renewcommand{\phi}{\varphi}
\newcommand{\dom}{\mathrm{dom}} 
\newcommand{\ev}{\mathrm{ev}}  
\newcommand{\name}[1]{{\ulcorner {#1} \urcorner}}
\newcommand{\Ho}{\operatorname{Ho}} 
\newcommand{\id}{\mathrm{id}}
\newcommand{\li}[1]{\overline{#1}} 
\newcommand{\ti}[1]{\widetilde{#1}} 
\newbox\anglebox
\def\angle{\copy\anglebox} 
\newcommand{\intperp}{{\begin{sideways}$\!\Vdash$\end{sideways}}} 
\newcommand{\iperp}{\ \intperp\ } 
\DeclareMathOperator{\Cart}{Cart}
\DeclareMathOperator{\hofib}{hofib}
\DeclareMathOperator{\Eq}{Eq}
\DeclareMathOperator{\idtoequiv}{\mathsf{idtoequiv}}
\DeclareMathOperator{\IsLocal}{\mathsf{IsLocal}}
\DeclareMathOperator{\Path}{Path}
\DeclareMathOperator{\pr}{pr}
\DeclarePairedDelimiterX{\norm}[1]{\lVert}{\rVert}{#1}
\DeclarePairedDelimiterX{\abs}[1]{\lvert}{\rvert}{#1}
\newbox\xrat@below
\newbox\xrat@above
\newcommand{\xrightarrowtail}[2][]{%
  \setbox\xrat@below=\hbox{\ensuremath{\scriptstyle #1}}%
  \setbox\xrat@above=\hbox{\ensuremath{\scriptstyle #2}}%
  \pgfmathsetlengthmacro{\xrat@len}{max(\wd\xrat@below,\wd\xrat@above)+.6em}%
  \mathrel{\tikz [>->,baseline=-.75ex]
                 \draw (0,0) -- node[below=-2pt] {\box\xrat@below}
                                node[above=-2pt] {\box\xrat@above}
                       (\xrat@len,0) ;}}
\begin{document}
\title{Localization Theory in an $\infty$-topos}

\author{Marco Vergura}
\email{mvergura@uwo.ca}
\date{8 July 2019}
\subjclass[2010]{Primary 55P60;
Secondary  18E35}

\begin{abstract}
Inspired by \cite{locinhott}, we develop the theory of \emph{reflective subfibrations} on an $\infty$-topos $\E$. A reflective subfibration $L_\bullet$ on $\E$ is a pullback-compatible assignment of a reflective subcategory $\D_X\subseteq \E_{/X}$, for every $X \in \E$. Reflective subfibrations abound in homotopy theory, albeit often disguised, e.g., as stable factorization systems. We prove that $L$-local maps (i.e., those maps that belong to some $\D_X$) admit a classifying map, and we introduce $L$-\emph{separated} maps, that is, those maps with $L$-local diagonal. $L$-separated maps are the local class of maps for a reflective subfibration $L'_\bullet$ on $\E$. We prove this fact in the companion paper \cite{l'loc}. In this paper, we investigate some interactions between $L_\bullet$ and $L'_\bullet$ and explain when the two reflective subfibrations coincide.
\end{abstract}
\maketitle
\tableofcontents

\section{Introduction}
This is the first of a series of two papers on localizations in an $\infty$-topos $\E$, studied via \emph{reflective subfibrations}. The companion paper is \cite{l'loc}.\par 

In classical homotopy theory, localization of spaces gives tools and techniques analogous to those provided in algebra by localizations of rings and modules. For example, localization of spaces at primes allows one to \emph{simplify} some problems by working locally at each prime, and it introduces \emph{local-to-global principles} and \emph{fracture theorems} in homotopy theory (see \cite{moreconcise} for an overview). Localization can also be seen as a way to \emph{present} certain objects of interest, rather than as a tool for simplifying them. For example, given a site $\C$, we are interested in the study of sheaves, rather than presheaves, over $\C$, and the process of inverting the covering sieves in the category of presheaves is better understood as a presentation of the category of sheaves, rather than as a simplification of the category of presheaves.\par

It is thus interesting to develop a unifying framework for the study of localization theories, with the goal of encompassing and generalizing some of the examples that are already understood, while also providing new insights about them. Our approach, based on the theory of \emph{reflective subfibrations} on an $\infty$-topos $\E$, employs the language of $\infty$\emph{-category theory}, it emphasizes \emph{localization of maps} rather than localization of objects, and it naturally shares ties with \emph{homotopy type theory}, which has models in $\infty$-topoi. In this paper, we carry out a systematic study of reflective subfibrations $L_\bullet$ on $\E$, show how they encompass many classical examples of localizations, and explore some relationships between $L_\bullet$ and another reflective subfibration $L'_\bullet$ associated to it.  

\subsection{Content and Structure}
In \cref{sec:univforlocclassofmaps}, we follow \cite{htt} and \cite{univinloccarclosed} to give an overview of the theory of local classes of maps, and of univalent classifying maps in an $\infty$-topos $\E$. In order to formulate univalence, one needs to introduce an \emph{object of equivalences} between any two objects $X$ and $Y$ of $\E$, defined in \cite[Thm.~2.10]{univinloccarclosed}. We give an alternative characterization of it in \cref{lm:charofeqxy}.\par

In \cref{sec:univfromreflsubf}, we take from \cite{modinhott} the definition of \emph{reflective subfibrations} on an $\infty$-topos $\E$ and explore some of their properties. A reflective subfibration $L_\bullet$ on $\E$ is a pullback-compatible assignment of reflective subcategories $\D_{X}\subseteq \E_{/X}$, with induced localization functor $L_X$, for every $X\in\E$. The collection of all objects in $\D_X$, as $X$ varies in $\E$, form the $L$\emph{-local maps}, and the objects of $\D:=\D_1$ are called $L$\emph{-local objects}. We show that $L$-local maps form a local class of maps in $\E$, and we characterize reflective subfibrations on spaces as ``fiberwise localizations" (\cref{cor:reflsubfinspaces}). Thus, $L$-local maps admit a univalent classifying map. This classifying map links reflective subfibration on $\E$ to reflective subuniverses in HoTT.\par  

In \cref{sec:lconnmaps}, we introduce and study $L$\emph{-connected maps} (\cref{def:lconnmaps}). Our main result in this section is the following theorem, which is proven, in a different flavor and in HoTT, in \cite{modinhott}.
\newtheorem*{thm:stablefactsystandmod}{\cref{thm:stablefactsystandmod}}
\begin{thm:stablefactsystandmod}
Let $\E$ be an $\infty$-topos.
\begin{enumerate}
\item Let $\ff=(\Ll,\Rr)$ be a stable factorization system on $\E$. There exists a modality $L^\ff_\bullet$ on $\E$ whose local maps are exactly the maps in $\Rr$.
\item Let $L_\bullet$ be a modality on $\E$. Let $\Ll$ be the class of $L$-connected maps and $\Rr$ the class of $L$-local maps. Then $\ff_L=(\Ll,\Rr)$ is a stable factorization system on $\E$.
\end{enumerate}
Moreover, the assignments $\ff\mapsto L^\ff_\bullet$ and $L_\bullet\mapsto\ff_L$ are inverse to one another.
\end{thm:stablefactsystandmod}
\par
In \cref{sec:localizwrtmap}, given any \emph{set} $S$ of maps in $\E$, we prove the existence of $S$\emph{-localization} on $\E$ (\cref{prop:existofsloc}). If $S=\{f\}$, where $f$ is a map of spaces, this recovers localization of spaces at the $f$\emph{-equivalences}, and localization of maps of spaces over a fixed base $X$ at the \emph{fiberwise f-equivalences}.\par

In \cref{sec:lsepmaps}, we introduce $L$\emph{-separated maps} as those maps whose diagonals are $L$-local maps. $L$-separated maps inherit a lot of pleasant properties from $L$-local maps. In particular, they also form a local class of maps of $\E$ (\cref{prop:lsepareloc}), hence they satisfy a necessary condition to be themselves the local maps for a reflective subfibration on $\E$. Showing that this is indeed the case is the purpose of our companion  paper \cite{l'loc}.\par

In \cref{sec:conseqofexistofl'}, we present some results that we can state or prove once we know that $L$-separated maps are associated to a reflective subfibration $L'_\bullet$. For example, we give a way to produce new stable factorization systems from old ones (\cref{prop:l'forstablefactsyst}), and we prove a result that shows how $L'_\bullet$ accounts for the lack of commutativity between $L_\bullet$ and loop functors (\cref{cor:locandloop}). We also give an explicit description of $L'_\bullet$ when $L_\bullet$ is localization at a set $S$ of maps in $\E$ (\cref{prop:l'forfloc}).  We make use of the fact that $L'$-localization is almost left exact (\cref{prop:spantol'locspanislequiv}) to give a characterization of \emph{self-separated} reflective subfibrations. These are reflective subfibrations $L_\bullet$ for which $L_\bullet = L'_\bullet$, and they can be characterized in terms of special localizations of $\E$, the \emph{quasi-cotopological} localizations (\cref{def:cotoploc}), as explained by the following result, which does not appear in \cite{locinhott}.
\newtheorem*{thm:l=l'char}{\cref{thm:l=l'char}}
\begin{thm:l=l'char}
The following are equivalent, for a reflective subfibration $L_\bullet$ of $\E$.
\begin{enumerate}
\item $L_\bullet$ is self-separated.
\item $L_\bullet$ is the modality associated to a quasi-cotopological localization of $\E$.
\end{enumerate}
In this case, hypercomplete maps are $L$-local.
\end{thm:l=l'char}

\subsection{Relation to other work}
Our take on localization theory is inspired by \emph{homotopy type theory}, a dependent type theory with homotopy-theoretic features (\cite{hott}). More precisely, the definition of reflective subfibration on an $\infty$-topos $\E$ appears in \cite{modinhott} as the external notion (in higher topos theory, HTT) that captures the internal description (in homotopy type theory, HoTT) of a \emph{reflective subuniverse}. The work in \cite{modinhott} is mainly concerned $\Sigma$\emph{-closed} reflective subuniverses, also known as \emph{modalities}. These correspond to reflective subfibrations $L_\bullet$ for which the composite of two $L$-local maps is again an $L$-local map. In the context of higher topos theory, the authors of \cite{agenBM} use the term ``modality" as a synonym for a stable factorization system on an $\infty$-topos $\E$, and they carry out a systematic study of these factorization systems. On the other hand, the authors of \cite{locinhott} shift their focus back to the general setting of reflective subuniverses, motivated by the study in homotopy type theory of localizations at primes (which are not modalities). We can then depict our work as the (homotopy) pushout square
$$
\bfig
\node rss(0,0)[\{\text{modalities in HoTT,\ \cite{modinhott}}\}]
\node cors(2000,0)[\{\text{localizations in HoTT,\ \cite{locinhott}}\}]
\node abjm(0,-300)[\{\text{modalities in HTT,\ \cite{agenBM}}\}]
\node th(2000,-300)[\{\text{localizations in HTT,\ this work}\}]

\arrow[rss`cors;]
\arrow[rss`abjm;]
\arrow[cors`th;]
\arrow[abjm`th;]

\place(1950,-110)[\begin{rotate}{180}\angle\end{rotate}]
\efig
$$
In \cite{intlang}, Shulman gives a proof of the conjecture that every $\infty$-topos models homotopy type theory. Hence, all statements proven in HoTT can be translated into true statements in any $\infty$-topos $\E$\footnote{ Modulo the initiality conjecture for homotopy type theory.}. This applies, in particular, to the results in \cite{locinhott}.  However, this recent development does not invalidate our work, for several reasons. First of all, many results here are not present in \cite{locinhott}. Secondly, our work on localization in HTT can not be \emph{immediately} recovered from the analogous work in HoTT since the starting points are different (reflective subfibrations in HTT, reflective subuniverses in HoTT). Furthermore, not all proofs we give here are \emph{direct} translations of the HoTT ones, as some type-theoretic arguments do not have an obvious counterpart in the HTT setting. Even for those arguments that parallel more closely the ones in \cite{locinhott}, our proofs can give some working-knowledge on how to use and adapt HoTT reasoning to prove theorems in an $\infty$-topos $\E$, in a spirit similar to \cite{agenBM} and \cite{rezkprookofbm}.

\subsection*{Acknowledgements} We would like to thank Dan Christensen, for his support and guidance, and Mike Shulman, for the careful reading of the material present here, and for many helpful suggestions.

\subsection*{Notation and Conventions} We will make extensive use of the \emph{orthogonality relation} between maps in an $\infty$-category $\C$. We refer the reader to \cite[Def.~3.1.1]{agenBM} for a definition. Given maps $f$ and $g$ in $\C$, we write $f\perp g$ to mean that $f$ is left orthogonal to $g$, and that $g$ is right orthogonal to $f$.\par
When $\C$ has a terminal object $1$, given an object $X$ in $\C$, we simply write $f\perp X$ to mean $f\perp (X\rightarrow 1)$. To minimize the risk of confusion, we use the symbol $\perp_X$ when we want to denote the orthogonality relation in the slice $\infty-$category $\C_{/X}$. For example, if $\alpha\colon p\rightarrow q$ is a map in $\C_{/X}$ and $r$ is an object in $\C_{/X}$, $\alpha\perp_{X} r$ means that $\alpha$ is left orthogonal to the map $r\rightarrow \id_X$ in $\C_{/X}$.\par
Given a class $\M$ of maps in $\C$, we write $\prescript{\perp}{}{\M}$ for the class of maps in $\C$ that are left orthogonal to every map in $\M$, and we write $\prescript{}{}{\M}^{\perp}$ for the class of maps in $\C$ that are right orthogonal to every map in $\M$.\par
By an $\infty$\emph{-topos} we mean an $\infty$-category $\E$ with the following properties.
\begin{itemize}
\item[(a)] $\E$ is a locally presentable $\infty$-category (\cite[Def.~5.5.0.1]{htt}).
\item[(b)] Colimits in $\E$ are universal (\cite[Def.~6.1.1.2]{htt}).
\item[(c)] The class of all maps in $\E$ is a local class of maps (see \cite[Thm.~6.1.3.8]{htt}).
\end{itemize}
This characterization of $\infty$-topoi follows from \cite[Thm.~6.1.6.8]{htt} and is also given in \cite[Def.~2.2.3]{agenBM}.

\section{Univalence for local classes of maps}
\label{sec:univforlocclassofmaps}
In this section, we develop some background on the theory of local classes of maps and their classifying maps, building on \cite{htt} and \cite{univinloccarclosed}. These notions will allow us to link the theory of reflective subfibrations in higher topos theory to the theory of reflective universes in homotopy type theory.
\subsection{Local classes and classifying maps}
\label{sec:localclassandclassmaps}
We introduce here local classes of maps in an $\infty$-topos $\E$. Modulo size issues, these are the classes $S$ of maps in $\E$ that admit a classifying map. We fix an $\infty$-topos $\E$ throughout.\par

\begin{definition}[{\cite[Def.~6.1.3.8, Prop.~6.2.3.14]{htt}}]
\label{def:locclassofmaps}
A class $S$ of maps in $\E$ is called \emph{local} if it closed under small coproducts in $\E^{\bullet\rightarrow\bullet}$, it is stable under pullbacks and satisfies the following condition. Given any pullback square in $\E$
$$
\bfig
\node e(0,0)[E]
\node m(400,0)[M]
\node x(0,-250)[X]
\node y(400,-250)[Y]

\arrow|a|[e`m;g]
\arrow|l|[e`x;q]
\arrow|r|[m`y;p]
\arrow|b|[x`y;f]

\place(70,-90)[\angle]
\efig
$$ 
where $f$ is an effective epimorphism, $p$ is in $S$ if and only if $q$ is in $S$.
\end{definition}
\begin{definition}
\label{def:classifyingmap}
Let $S$ be a pullback-stable class of maps in an $\infty$-topos $\E$. Let $\Cart(S)$ be the sub-$\infty$-category of $\E^{\bullet\rightarrow\bullet}$ having the maps in $S$ as objects and pullback squares as morphisms. A \emph{classifying map} for $S$ is a terminal object of $\Cart(S)$.
\end{definition}
Thus, a classifying map $p\colon E\rightarrow X$ for $S$ is a map in $S$ such that every other map in $S$ is a pullback of $p$ in an essentially unique way.

\begin{example}[{\cite[Prop.~6.1.6.3]{htt}}]
The class of all monomorphisms in an $\infty$-topos $\E$ has a classifying map.
\end{example}

\begin{proposition}[{\cite[Prop.~6.1.6.7]{htt}}]
\label{prop:locclassareclass}
Let $S$ be a local class of maps in an $\infty$-topos $\E$. Then, there are arbitrarily large regular cardinals $\kappa$ such that the class $S_\kappa$ of maps in $S$ that are relatively $\kappa$-compact (\cite[Def.~6.1.6.4]{htt}) is local and has a classifying map.
\end{proposition}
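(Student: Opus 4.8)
The plan is to realize $S_\kappa$ as the class represented by a suitable presheaf on $\E$, and to separately check that $S_\kappa$ is itself local. First I would package the pullback-stable class $S$ into a moduli functor
$$
F_S \colon \E^{\op} \to \ha{\Spaces}, \qquad X \mapsto \text{core of the full subcategory of $\E_{/X}$ spanned by maps in $S$},
$$
valued in (possibly large) spaces, with contravariant functoriality given by pullback, which is well defined since $S$ is pullback-stable. The key reduction is that $S$ being a local class in the sense of \cref{def:locclassofmaps} is equivalent to $F_S$ carrying colimits in $\E$ to limits in $\ha{\Spaces}$, i.e.\ to $F_S$ preserving small limits as a presheaf: closure under small coproducts handles the product case, and the effective-epimorphism clause of \cref{def:locclassofmaps}, combined with the fact that in an $\infty$-topos every colimit is built from coproducts and geometric realizations of \v{C}ech nerves of effective epimorphisms, upgrades this to descent along all colimits. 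A classifying map for $S$ in the sense of \cref{def:classifyingmap} is then precisely a representing object for $F_S$; the only obstruction to representability is that $F_S$ is a priori large-valued, which is what forces the passage to $S_\kappa$.

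Next I would control sizes by cutting down to relatively $\kappa$-compact maps. For a regular cardinal $\kappa$, let $F_{S_\kappa}\subseteq F_S$ be the subfunctor selecting, over each $X$, those maps in $S$ that are relatively $\kappa$-compact in the sense of \cite{htt}. Since relative $\kappa$-compactness is stable under pullback, $F_{S_\kappa}$ is again a presheaf, and because the maps it records have $\kappa$-compact fibers, for $\kappa$ sufficiently large and regular the space $F_{S_\kappa}(X)$ is \emph{small}. Using that $\E$ is locally presentable, one checks that for all such $\kappa$ the presheaf $F_{S_\kappa}$ is moreover accessible and continues to satisfy descent, the point being that relative $\kappa$-compactness is both preserved and detected by the coproduct-and-\v{C}ech-nerve decompositions used above once $\kappa$ is chosen compatibly with a presentation of $\E$.

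With $F_{S_\kappa}\colon \E^{\op}\to\Spaces$ now small-valued, accessible, and limit-preserving, I would invoke the representability theorem for presheaves on a presentable $\infty$-category (\cite{htt}): an accessible presheaf preserving small limits is representable. The representing object, together with its universal family, is a map in $S_\kappa$ of which every map in $S_\kappa$ is a pullback in an essentially unique way, that is, a classifying map for $S_\kappa$ in the sense of \cref{def:classifyingmap}. It then remains to verify that $S_\kappa$ is itself a local class: pullback-stability and closure under small coproducts follow from the corresponding properties of $S$ together with the fact that relative $\kappa$-compactness enjoys these closure properties \cite{htt}, while the effective-epimorphism criterion of \cref{def:locclassofmaps} descends from that for $S$ once we know, again for $\kappa$ large enough, that relative $\kappa$-compactness can be tested after base change along an effective epimorphism.

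The main obstacle is the simultaneous cardinal bookkeeping in the middle step: one must exhibit a proper class of regular $\kappa$ for which $F_{S_\kappa}$ is at once small-valued, accessible, and descent-preserving, and for which relative $\kappa$-compactness is stable under the three operations defining a local class. The existence of \emph{arbitrarily large} such $\kappa$ comes from the fact that each requirement is a closure or cofinality condition cutting out an unbounded (indeed club-like) class of regular cardinals, so that their intersection remains unbounded. Pinning down this cardinal analysis, rather than the essentially formal representability argument, is where the real work lies.
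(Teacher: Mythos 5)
The paper gives no proof of this proposition: it is quoted directly from \cite[Prop.~6.1.6.7]{htt}, and Lurie's proof there proceeds essentially as you outline --- encoding $S$ as a limit-preserving functor $\E^{\op}\to\ha{\Spaces}$ via descent, cutting down to relatively $\kappa$-compact maps to obtain a small-valued accessible presheaf, invoking the representability criterion for presentable $\infty$-categories, and isolating the real work in the cardinal estimates showing that the good $\kappa$ form an unbounded class. Your sketch is therefore the same argument as the one the paper defers to, and it is correct at the level of detail given.
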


\begin{notation}
For every regular cardinal $\kappa$ as in \cref{prop:locclassareclass}, we denote by $\ti{\U_\kappa}\rightarrow\U_\kappa$ the classifying map for $\kappa$-compact maps in $\E$. Here, $\U$ stands for ``\emph{universe}'', a terminology borrowed from homotopy type theory (see \cite[\S~1.3]{hott}). If $S$ is a local class of maps, we denote by $\ti{\U_\kappa^S}\rightarrow\U_\kappa^S$ the classifying map for $S_\kappa$. Note that, by definition, there is a pullback square
\begin{equation}
\label{eq:classmappullbackofuniv}
\bfig
\node e(0,0)[\ti{\U_\kappa^S}]
\node m(400,0)[\ti{\U_\kappa}]
\node x(0,-250)[\U_\kappa^S]
\node y(400,-250)[\U_\kappa]

\arrow|a|[e`m;]
\arrow|l|[e`x;]
\arrow|r|[m`y;]
\arrow|b|[x`y;s_\kappa]

\place(70,-90)[\angle]
\efig
\end{equation}
where the map $s_\kappa$ is unique up to equivalence.
\end{notation}

\subsection{Objects of equivalences and univalence}
\label{sec:objofeqanduniv}
By \cite{univinloccarclosed}, for $X,Y\in\E$, there is an object $\Eq (X,Y)$ in $\E$ such that the global elements of $\Eq (X,Y)$ are the equivalences from $X$ to $Y$. Such objects of equivalences are used to define and characterize univalent maps in an $\infty$-topos.\par
Let $J(\E)$ be the \emph{core} of $\E$, that is, the (strict) pullback of $\infty$-categories
$$
\bfig
\node e(0,0)[J(\E)]
\node m(600,0)[\E]
\node x(0,-300)[J(\Ho(\E))]
\node y(600,-300)[\Ho(\E)]

\arrow|a|[e`m;]
\arrow|l|[e`x;]
\arrow|r|[m`y;]
\arrow|b|[x`y;]

\place(70,-90)[\angle]
\efig
$$
Since, for every $\infty$-category $\C$ and every $X,Y\in\C$, $\Ho(\C)(X,Y)\simeq \pi_0(\C(X,Y))$, it follows that there is a pullback square
\begin{equation}
\label{eq:J(E)(X,Y)aspllbck}
\bfig
\node e(0,0)[J(\E)(X,Y)]
\node m(800,0)[\E(X,Y)]
\node x(0,-300)[\pi_0(J(\E)(X,Y))]
\node y(800,-300)[\pi_0(\E(X,Y))]

\arrow|a|[e`m;]
\arrow|l|[e`x;]
\arrow|r|[m`y;]
\arrow|b|[x`y;]

\place(70,-90)[\angle]
\efig
\end{equation}
and that the map $J(\E)(X,Y)\rightarrow\E(X,Y)$ is a monomorphism.
\begin{proposition}[{\cite[Thm.~2.10]{univinloccarclosed}}]
For every $X,Y\in\E$, there is a subobject $\Eq_\E(X,Y)$ of $Y^X$ such that, for every $T\in\E$, there is an equivalence of $\infty$-groupoids
$$
\E(T,\Eq_\E(X,Y))\simeq J(\E_{/T})(X\times T,Y\times T),
$$
natural in $T\in\E$. Furthermore, this is also true ``locally'', that is, for every two objects $p,q$ in a slice category $\E_{/X}$.
\end{proposition}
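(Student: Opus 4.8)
The plan is to realize $\Eq_\E(X,Y)$ as the subobject of the exponential $Y^X$ consisting of those generalized points that classify equivalences. First I would record how $Y^X$ represents maps over a varying base: combining the exponential adjunction with the projection $Y\times T\to Y$ gives, for every $T\in\E$, a natural equivalence
\[
\E(T, Y^X)\simeq \E(T\times X, Y)\simeq \E_{/T}(X\times T, Y\times T).
\]
Setting $T=Y^X$ and transporting $\id_{Y^X}$ across this equivalence yields a universal map $u\colon (Y^X)\times X\to (Y^X)\times Y$ in $\E_{/Y^X}$, with the defining property that for every $t\colon T\to Y^X$ the base change $t^*u$ is the map over $T$ classified by $t$ (using that pulling back the projections $(Y^X)\times X,(Y^X)\times Y\to Y^X$ along $t$ recovers $T\times X, T\times Y$).

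Next I would cut out the locus where $u$ is an equivalence. Since $\E_{/Y^X}$ is again an $\infty$-topos, and in particular cartesian closed, I can form in $\E_{/Y^X}$ the internal hom $H:=[(Y^X)\times Y,\ (Y^X)\times X]$ together with the post- and pre-composition maps $u\circ(-)\colon H\to[(Y^X)\times Y,(Y^X)\times Y]$ and $(-)\circ u\colon H\to[(Y^X)\times X,(Y^X)\times X]$. Taking fibers of these over the respective identity sections produces the objects of right and left inverses of $u$, and I would define $\biinv(u)$ as their product over $Y^X$. The crucial input, valid in any $\infty$-topos via its internal logic, is that biinvertibility is a \emph{property}: the structure map $\biinv(u)\to Y^X$ is a monomorphism. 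I would then set $\Eq_\E(X,Y):=\biinv(u)$, a subobject of $Y^X$.

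For the universal property I would argue that, because $\biinv(u)\to Y^X$ is a mono, a map $t\colon T\to Y^X$ admits an (automatically essentially unique) lift to $\Eq_\E(X,Y)$ exactly when the base change $t^*u\simeq(X\times T\to Y\times T)$ is biinvertible, i.e.\ an equivalence in $\E_{/T}$. Thus $\E(T,\Eq_\E(X,Y))$ is precisely the union of those connected components of $\E(T,Y^X)\simeq\E_{/T}(X\times T,Y\times T)$ that consist of equivalences, which is the core mapping space $J(\E_{/T})(X\times T,Y\times T)$; naturality in $T$ is inherited from that of the exponential adjunction and of the fiber and internal-hom constructions. The ``local'' statement then requires no new idea: running the same construction with the $\infty$-topos $\E_{/X}$ in place of $\E$ and objects $p,q\in\E_{/X}$ in place of $X,Y$ produces $\Eq_{\E_{/X}}(p,q)$ with the analogous property.

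The main obstacle is the step asserting that $\biinv(u)\to Y^X$ is a monomorphism whose points detect equivalences. In homotopy type theory this is the familiar fact that $\biinv(f)$ is a proposition and that $\biinv(f)\simeq\Isequiv(f)$, but importing it into the external $\infty$-topos setting demands care: one must check that the slice internal homs, the fibers over the identity sections, and hence $\biinv(u)$ are all stable under base change $t^*$, so that the fiber of $\biinv(u)$ over $t$ is genuinely the (empty-or-contractible) space of biinvertibility data for $t^*u$. An alternative route that avoids this bookkeeping is to argue representability abstractly: the assignment $T\mapsto J(\E_{/T})(X\times T,Y\times T)$ is a pullback-stable, pointwise $(-1)$-truncated subfunctor of the representable $\E(-,Y^X)$ --- using that each $J(\C)(A,B)\to\C(A,B)$ is a monomorphism, as already noted, and that the equivalences form a local class of maps --- so it is classified by a map $Y^X\to\Omega$ and is therefore represented by a subobject of $Y^X$.
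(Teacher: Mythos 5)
The paper does not actually prove this proposition: it is imported wholesale from \cite[Thm.~2.10]{univinloccarclosed}, so there is no internal argument to compare yours against. Judged on its own terms, your construction is sound, and it is essentially the ``biinvertibility'' presentation of $\Eq_\E(X,Y)$: note that the paper's \cref{lm:charofeqxy} proves (taking the cited result for granted) that $\Eq_\E(X,Y)$ coincides with the pullback of left-inverse and right-inverse data for the tautological map, which is exactly the object $\biinv(u)$ you build from scratch, so your route in effect establishes the cited theorem and \cref{lm:charofeqxy} simultaneously. The one point you rightly flag as the crux --- that $\biinv(u)\rightarrow Y^X$ is a monomorphism --- does require an argument, but a short external one is available: base-change $u$ along $\biinv(u)\rightarrow Y^X$ itself; over $\biinv(u)$ the map $u$ acquires both a left and a right inverse and is therefore an equivalence, so the objects of left and right inverses (hence their fiber product) become contractible over $\biinv(u)$, i.e.\ the projection $\biinv(u)\times_{Y^X}\biinv(u)\rightarrow\biinv(u)$ is an equivalence, which is precisely the statement that $\biinv(u)\rightarrow Y^X$ is mono. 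This does rest on the ``bookkeeping'' you mention --- pullback functors in a locally cartesian closed $\infty$-category preserve slice internal homs and fibers over identity sections --- which is standard but must be said. Your alternative route, exhibiting $T\mapsto J(\E_{/T})(X\times T,Y\times T)$ as a pullback-stable $(-1)$-truncated subfunctor of $\E(-,Y^X)$ satisfying descent (via locality of the class of equivalences) and hence representable by a subobject, is in substance the proof given in the cited source, so either completion is acceptable; and the ``local'' statement is, as you say, the same argument run in $\E_{/X}$.
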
 
\begin{notation}
For $p\colon E\rightarrow X$ and $q\colon M\rightarrow X$, we write $\Eq_{/X}(E,M)$ for the domain of $\Eq_{\E_{/X}}(p,q)$. We will often just write $\Eq(p,q)$ for $\Eq_{\E_{/X}}(p,q)$. 
\end{notation}
We can give a more explicit description of $\Eq(X,Y)$ as follows. For $X,Y\in\E$, there is a map
$c_{X,Y}\colon X^Y\times Y^X\rightarrow X^X
$ 
obtained as the adjunct map to the composite
$$
\bfig
\morphism(0,0)<1100,0>[X^Y\times Y^X\times X`X^Y\times Y;X^Y\times \ev_{X,Y}]
\morphism(1100,0)<600,0>[X^Y\times Y`X;\ev_{Y,X}]
\efig
$$
\begin{lemma}\label{lm:charofeqxy}
For every $X,Y\in\E$, there is a pullback square
$$
\bfig
\node e(0,0)[\Eq(X,Y)]
\node m(800,0)[X^Y\times Y^X\times X^Y]
\node x(0,-300)[1]
\node y(800,-300)[X^X\times Y^Y]

\arrow|a|[e`m;]
\arrow|l|[e`x;]
\arrow|r|[m`y;(c_{X,Y}\circ \pr_{12},\ c_{Y,X}\circ\pr_{13})]
\arrow|b|[x`y;(\id_X,\id_Y)]

\place(70,-90)[\angle]
\efig
$$
where $\pr_{12}$ (resp. $\pr_{13}$) is the projection $Y^X\times X^Y\times X^Y\rightarrow Y^X\times X^Y$ onto the first two components (resp.~onto the first and last components). This is also true ``locally'' for every $p,q\in\E_{/X}$.
\end{lemma}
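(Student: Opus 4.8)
The plan is to identify the pullback $P$ displayed in the statement with $\Eq(X,Y)$ by showing that both objects represent the same functor $\E^{\op} \to \Gpd$, and then to appeal to the Yoneda lemma. By the preceding proposition, $\Eq(X,Y)$ represents the functor $T \mapsto J(\E_{/T})(X \times T, Y \times T)$, so it is enough to construct an equivalence $\E(T, P) \simeq J(\E_{/T})(X \times T, Y \times T)$ natural in $T \in \E$. Reading the corner $X^Y \times Y^X \times X^Y$ as parametrizing a triple $(f, g, h)$ with $f \colon X \to Y$ and $g, h \colon Y \to X$, the two projections feed $(g, f)$ into $c_{X,Y}$ and $(f, h)$ into $c_{Y,X}$, while the cone point forces $g \circ f = \id_X$ and $f \circ h = \id_Y$; thus $P$ is, informally, the object of \emph{bi-invertible} maps from $X$ to $Y$.

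First I would fix $T$ and compute $\E(T, P)$. Since $\E(T, -)$ preserves pullbacks and finite products, $\E(T, P)$ is the homotopy pullback of $\E(T, Y^X) \times \E(T, X^Y) \times \E(T, X^Y)$ and $\E(T, 1) \simeq \ast$ over $\E(T, X^X \times Y^Y)$. The exponential adjunction gives $\E(T, B^A) \simeq \E(T \times A, B) \simeq \E_{/T}(A \times T, B \times T)$, so a point of the top factor amounts to a triple of maps over $T$, namely $\bar f \colon X \times T \to Y \times T$ together with $\bar g, \bar h \colon Y \times T \to X \times T$.

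The crucial step is to check that, under this adjunction, the map $c_{X,Y}$ is carried to composition. Unwinding the defining composite of evaluations shows $c_{X,Y}(g, f) = g \circ f$ and, symmetrically, $c_{Y,X}(f, h) = f \circ h$; I would then verify that these correspond to the slice composites $\bar g \circ \bar f$ and $\bar f \circ \bar h$, and that the section $(\id_X, \id_Y) \colon 1 \to X^X \times Y^Y$ corresponds to $(\id_{X \times T}, \id_{Y \times T})$. Granting this, a point of $\E(T, P)$ is precisely a triple $(\bar f, \bar g, \bar h)$ equipped with homotopies $\bar g \circ \bar f \simeq \id_{X \times T}$ and $\bar f \circ \bar h \simeq \id_{Y \times T}$, i.e., a bi-invertible map $X \times T \to Y \times T$ over $T$. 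Invoking the standard fact that the space of bi-invertible maps is equivalent to the space of equivalences — forgetting the witnesses lands in the equivalences inside $\E_{/T}(X \times T, Y \times T)$, and the fiber over any fixed equivalence is contractible — yields $\E(T, P) \simeq J(\E_{/T})(X \times T, Y \times T)$, naturally in $T$, whence $P \simeq \Eq(X,Y)$ by Yoneda. The local statement follows by running the identical argument inside the $\infty$-topos $\E_{/X}$ for $p, q \in \E_{/X}$.

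I expect the main obstacle to be the identification of $c_{X,Y}$ (and $c_{Y,X}$) with composition: beyond the pointwise computation $c_{X,Y}(g,f) = g \circ f$, one must trace the evaluation maps through the exponential adjunction coherently enough that the resulting equivalence of mapping spaces is genuinely natural in $T$, so that the Yoneda lemma applies. The passage from bi-invertibility to equivalence is routine, but it too must be carried out compatibly with this naturality.
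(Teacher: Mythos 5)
Your computation of $\E(T,P)$ as the space of triples $(\bar f,\bar g,\bar h)$ with $\bar g\bar f\simeq\id$ and $\bar f\bar h\simeq\id$, and the reduction of ``bi-invertible'' to ``equivalence'' via contractibility of the fibers, is exactly the computational core of the paper's proof (the paper writes the fiber of the forgetful map over an equivalence $f$ as $\hofib_{\id_Y}((-)\circ f)\times\hofib_{\id_X}(f\circ(-))$ and observes both factors are contractible). So the substance is right.

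The genuine gap is the one you flag yourself and do not close: to apply Yoneda you need the equivalences $\E(T,P)\simeq J(\E_{/T})(X\times T,Y\times T)$ to assemble into a natural equivalence of $\infty$-groupoid-valued functors, and in the $\infty$-categorical setting an objectwise family of equivalences is not such a datum --- you would have to produce the coherence by hand through the exponential adjunctions, which is exactly the part your proposal leaves as ``I would then verify.'' The paper sidesteps this entirely: it first builds an honest morphism $\phi\colon\Eq(X,Y)\to P$ \emph{in} $\E$, using the inversion map $i\colon\Eq(X,Y)\to\Eq(Y,X)$ supplied by \cite[Lemma 2.8]{univinloccarclosed} to populate the two $X^Y$-coordinates (the compatibility of $i$ with composition is what makes the square into $X^X\times Y^Y$ commute, hence what induces the map into the pullback). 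Once $\phi$ exists as a map in $\E$, naturality of $\E(T,\phi)$ in $T$ is automatic, and one only has to check objectwise that it is an equivalence --- which is your bi-invertibility computation. I would recommend restructuring your argument around such a comparison map rather than attempting to build the natural transformation directly; without the inversion map (or some equivalent device producing a morphism between the two representing objects) the Yoneda step does not go through as written.
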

Note that, on global elements, the right vertical map sends $f\in\E(X,Y)$ and $g,h\in\E(Y,X)$ to $(gf,fh)\in \E(X,X)\times\E(Y,Y)$.
\begin{proof}
The statement for slice categories is proven exactly as the one for $\E$, so we just prove the latter. By \cite[Lemma 2.8]{univinloccarclosed}, there is an inversion map $i\colon \Eq(X,Y)\rightarrow \Eq(Y,X)$, such that, for every $T\in\E$, the following diagram commutes
$$
\bfig
\morphism(0,0)<1700,0>[J(\E_{/T})(X\times T,Y\times T)`1;]
\morphism(0,0)<0,-350>[J(\E_{/T})(X\times T,Y\times T)`J(\E_{/T})(X\times T,Y\times T)\times J(\E_{/T})(Y\times T,X\times T);(\id,i)]
\morphism(0,-350)<1700,0>[J(\E_{/T})(X\times T,Y\times T)\times J(\E_{/T})(Y\times T,X\times T)`J(\E_{/T})(X\times T,X\times T);]
\morphism(1700,0)|r|<0,-350>[1`J(\E_{/T})(X\times T,X\times T);\id_{X\times T}]
\morphism(0,-350)<0,-300>[J(\E_{/T})(X\times T,Y\times T)\times J(\E_{/T})(Y\times T,X\times T)`\E_{/T}(X\times T,Y\times T)\times \E_{/T}(Y\times T,X\times T);]
\morphism(0,-650)<1700,0>[\E_{/T}(X\times T,Y\times T)\times \E_{/T}(Y\times T,X\times T)`\E_{/T}(X\times T,X\times T);]
\morphism(1700,-350)<0,-300>[J(\E_{/T})(X\times T,X\times T)`\E_{/T}(X\times T,X\times T);]
\efig
$$
where the middle and bottom horizontal arrows are composition maps and the unlabelled vertical maps are monomorphisms. For each $T$, $i$ picks out an inverse for an equivalence $X\times T\rightarrow Y\times T$ over $T$. Now, let $P$ be the pullback in $\E$ of the cospan displayed in the statement of the lemma. If we still denote by $i$ the composite of $i$ with $\Eq(Y,X)\rightarrowtail X^Y$, there is a map
$
\Eq(X,Y)\rightarrow X^Y\times Y^X\times X^Y
$
given by $\Eq(X,Y)\rightarrowtail Y^X$ on the second component and by $i$ on the other two components. The above-mentioned result from \cite{univinloccarclosed} then implies that this map determines a morphism $\phi\colon\Eq(X,Y)\rightarrow P$.\par
To show that $\phi$ is an equivalence, we verify that $\E(T,\phi)$ is an equivalence of $\infty$-groupoids for every $T\in\E$. For ease of exposition, we show this for $T=1$ only; the same proof below goes through for any $T\in\E$ by working with $\E_{/T}(X\times T,Y\times T)$ rather than $\E(X,Y)$. There is a homotopy pullback in $\infty\Gpd$
$$
\bfig
\node e(0,0)[\E(1,P)]
\node m(1100,0)[\E(Y,X)\times\E(X,Y)\times\E(Y,X)]
\node x(0,-300)[1]
\node y(1100,-300)[\E(X,X)\times \E(Y,Y)]

\arrow|a|[e`m;]
\arrow|l|[e`x;]
\arrow|r|[m`y;]
\arrow|b|[x`y;]

\place(70,-100)[\angle]
\efig
$$ 
whereas $\E(1,\Eq(X,Y))\simeq J(\E)(X,Y)$ can be described as the (strict) pullback \eqref{eq:J(E)(X,Y)aspllbck}.\par 
Note that, by the description of equivalences in an $\infty$-category as those maps having a left and a right inverse, the composite map 
$$\E(1,P)\rightarrow\E(Y,X)\times\E(X,Y)\times\E(Y,X)\rightarrow \E(X,Y)$$
automatically lands in $J(\E)(X,Y)$, giving a map $\psi\colon\E(1,P)\rightarrow J(\E)(X,Y)$ which takes $(g,f,h)$ (for $f\colon X\rightarrow Y$, $gf\simeq \id$ and $fh\simeq \id$) to $f$. On the other hand, the map $\ti{\phi}\colon J(\E)(X,Y)\rightarrow\E(1,P)$ induced by $\phi$ sends an equivalence $f\colon X\rightarrow Y$ to $(i(f),f,i(f))$, for a chosen inverse $i(f)$ of $f$. It follows that $\psi\circ\ti{\phi}\simeq \id$. We conclude by observing that $\psi$ is an equivalence. For, if $f\in J(\E)(X,Y)$, by definition of $\E(1,P)$ we get that
\[
\hofib_f(\psi)\simeq \hofib_{\id_Y}((-)\circ f)\times \hofib_{\id_X}(f\circ (-))
\]
and both factors on the right are contractible, since $f$ is an equivalence.
\end{proof}

\begin{definition}[{\cite[$\S$3.1]{univinloccarclosed}}]
The \emph{object of equivalences} for $p\colon E\rightarrow X$ is the object of $\E_{/X\times X}$ given by 
$$
\Eq_{\E_{/X\times X}}(p\times \id_X,\id_X\times p)\colon\Eq_{/X\times X}(p\times \id_X,\id_X\times p)\rightarrow X\times X
$$
where $p\times \id_X\colon E\times X\rightarrow X\times X$ and similarly for $\mathrm{id}_{X}\times p$. We write the object of equivalences for $p$ as
$\Eq_{/X}(p)\colon\Eq_{/X}(E)\rightarrow X\times X$
\end{definition}
By the definition of $\Eq$, it follows that the identity map
$
\id_p\in J(\E_{/X})(p,p)
$
induces a map $\idtoequiv\colon X\rightarrow \Eq_{/X}(E)$ over $X\times X$.
\begin{definition}\cite[\S 3.2]{univinloccarclosed}
\label{def:univmap}
A \emph{univalent map} is a map $p\colon E\rightarrow X$ in $\E$ for which the associated map $\idtoequiv\colon X\rightarrow \Eq_{/X}(E)$ is an equivalence in $\E_{/X\times X}$.
\end{definition}
\begin{proposition}[{\cite[Prop.~3.8]{univinloccarclosed}}]\label{prop:localclassmapsareuniv}
Every classifying map $p$ is univalent.
\end{proposition}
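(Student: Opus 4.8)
The plan is to re-express univalence of $p$ as a statement about path spaces and then extract it from the universal property of a classifying map; in fact only a fragment of that universal property is needed. Since the forgetful functor $\E_{/X\times X}\to\E$ is conservative, $\idtoequiv$ is an equivalence as soon as $\E(T,\idtoequiv)$ is an equivalence of $\infty$-groupoids for every $T\in\E$. Both $\E(T,X)$ and $\E(T,\Eq_{/X}(E))$ live over $\E(T,X\times X)$ — the first through the diagonal $X\to X\times X$, the second through $\Eq_{/X}(p)$ — and $\E(T,\idtoequiv)$ is a map over $\E(T,X\times X)$.

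\emph{Reduction to fibers.} I would therefore check that $\E(T,\idtoequiv)$ is a fiberwise equivalence: for each $w=(a,b)\colon T\to X\times X$, the local form of the Proposition on objects of equivalences identifies the fiber over $w$ on the target with $J(\E_{/T})(a^{*}E,b^{*}E)$, the space of equivalences over $T$ between the pullbacks of $p$ along $a$ and $b$, while the fiber over $w$ on the source is the path space $\Path_{\E(T,X)}(a,b)$. Under these identifications $\E(T,\idtoequiv)$ becomes the transport map sending a path $a\simeq b$ to the induced equivalence $a^{*}E\simeq b^{*}E$; it is this map that I must show is an equivalence.

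\emph{The classifying property.} Write $S(T)$ for the core $\infty$-groupoid of the full subcategory of $\E_{/T}$ spanned by the maps in $S$; pullback makes $T\mapsto S(T)$ a presheaf of $\infty$-groupoids, and pulling back $p$ defines $\theta_T\colon\E(T,X)\to S(T)$, $a\mapsto a^{*}p$, natural in $T$. Terminality of $p$ in $\Cart(S)$ says precisely that for every $q\in S$ with codomain $T$ the space of pairs $(a\colon T\to X,\ \alpha\colon q\simeq a^{*}p)$ is contractible; this is the assertion that each homotopy fiber of $\theta_T$ is contractible, so $\theta_T$ is an equivalence. (For univalence alone one only needs $\theta_T$ to be fully faithful, i.e.\ an equivalence on path spaces; the classifying hypothesis supplies essential surjectivity as a bonus.)

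\emph{Conclusion and main obstacle.} An equivalence of $\infty$-groupoids induces equivalences on all path spaces, so $\theta_T$ yields $\Path_{\E(T,X)}(a,b)\simeq\Path_{S(T)}(a^{*}p,b^{*}p)=J(\E_{/T})(a^{*}E,b^{*}E)$, the last equality because $S(T)$ is a core. Combined with the reduction above, this proves $p$ univalent — once one knows that the path-space map coming from $\theta_T$ is the same as the transport map coming from $\idtoequiv$. This coherence check is where I expect the real work. Both constructions turn a path $\gamma\colon a\simeq b$ into an equivalence $a^{*}E\simeq b^{*}E$, and both send the identity path at $a$ to the identity equivalence $\id_{a^{*}E}$ — for $\theta_T$ by its functoriality, for $\idtoequiv$ by its definition. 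Since the based path space $\{(b,\gamma)\mid\gamma\colon a\simeq b\}$ is contractible, a path-induction argument should reduce the equality of these two natural families to their common value on the identity path, where they visibly agree. The difficulty is thus not the homotopy bookkeeping but making this last identification precise and natural in $T$.
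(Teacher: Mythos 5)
The paper does not prove this proposition; it imports it verbatim from the cited reference, so there is no internal proof to compare against. Your argument is correct and is essentially the standard one from that source: univalence of $p$ amounts to the naming map $a\mapsto a^{\ast}p$ being fully faithful (an equivalence on path spaces) as a map of presheaves of $\infty$-groupoids, while terminality in $\Cart(S)$ says it is an equivalence outright, and an equivalence is in particular fully faithful. Your fiber identifications (the fiber of $\E(T,\Eq_{/X}(E))\to\E(T,X\times X)$ over $(a,b)$ being $J(\E_{/T})(a^{\ast}E,b^{\ast}E)$ via the local form of the $\Eq$ proposition, and the fiber of the diagonal being $\Path_{\E(T,X)}(a,b)$) are right, as is the translation of terminality into contractibility of the homotopy fibers of $\theta_T$. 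One remark on the step you flag as the real difficulty: the coherence between $\E(T,\idtoequiv)$ and the path-space map of $\theta_T$ can be bypassed entirely. Fix $a$ and sum over $b$: the source total space $\sum_b\Path_{\E(T,X)}(a,b)$ is the based path space, hence contractible, and the target total space $\sum_b J(\E_{/T})(a^{\ast}E,b^{\ast}E)$ is (up to inverting equivalences) exactly the homotopy fiber of $\theta_T$ over $a^{\ast}p$, hence contractible by terminality. Thus $\E(T,\idtoequiv)$ restricts to a map over $\E(T,X)$ between contractible total spaces, so it is an equivalence of total spaces and therefore induces equivalences on all homotopy fibers over points $(a,b)$ --- no identification with transport is needed. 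Your path-induction argument would also work, but this shortcut removes the only step where you anticipated real labor.
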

We end this section with a result about truncated univalent map.
\begin{lemma}
\label{lm:univtuncmap}
Let $p\colon E\rightarrow X$ be a univalent and $n$-truncated map in an $\infty$-topos $\E$, for $n\geq (-1)$. Then both $E$ and $X$ are $(n+1)$-truncated.
\end{lemma}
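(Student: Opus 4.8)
The plan is to prove the two truncatedness assertions separately, obtaining the statement for $X$ directly from univalence and then bootstrapping to $E$. Throughout I would use the standard closure properties of $n$-truncated maps in an $\infty$-topos (see \cite[\S 5.5.6]{htt}): they are stable under pullback and composition, every equivalence is $n$-truncated, every $n$-truncated map is $(n+1)$-truncated, and an object $Z$ is $(n+1)$-truncated if and only if its diagonal $\Delta_Z\colon Z\rightarrow Z\times Z$ is $n$-truncated. I would also use that a slice $\E_{/B}$ of an $\infty$-topos is again an $\infty$-topos, and in particular is locally cartesian closed.

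First I would show that $X$ is $(n+1)$-truncated. By \cref{def:univmap}, univalence of $p$ says that $\idtoequiv\colon X\rightarrow\Eq_{/X}(E)$ is an equivalence in $\E_{/X\times X}$. Since the identity equivalence $\id_{E_x}$ lies over the diagonal point $(x,x)$, the structure map of the domain of $\idtoequiv$ is precisely the diagonal $\Delta_X$; hence univalence identifies $\Delta_X$ with $\Eq_{/X}(p)\colon\Eq_{/X}(E)\rightarrow X\times X$ as objects of $\E_{/X\times X}$. By the diagonal criterion above, it therefore suffices to prove that $\Eq_{/X}(p)$ is an $n$-truncated map, i.e.\ that $\Eq_{/X}(E)$ is an $n$-truncated object of $\E_{/X\times X}$.

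To see this, recall that $\Eq_{/X}(p)=\Eq_{\E_{/X\times X}}(p\times\id_X,\,\id_X\times p)$, and that by \cite[Thm.~2.10]{univinloccarclosed} (applied locally in $\E_{/X\times X}$) the object of equivalences is a subobject of the internal hom $(\id_X\times p)^{(p\times\id_X)}$ computed in $\E_{/X\times X}$. Now $\id_X\times p$ is a pullback of $p$ along a projection $X\times X\rightarrow X$, hence an $n$-truncated object of $\E_{/X\times X}$; and the internal-hom functor $\underline{\Hom}_{X\times X}(p\times\id_X,-)$, being a right adjoint, preserves $n$-truncated objects (as $\E(W,-)$ of its value is a mapping space into an $n$-truncated object). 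Thus $(\id_X\times p)^{(p\times\id_X)}$ is $n$-truncated in $\E_{/X\times X}$. A monomorphism is $(-1)$-truncated, hence $n$-truncated since $n\geq -1$, so the subobject $\Eq_{/X}(E)$ is $n$-truncated as well. It follows that $\Delta_X$ is $n$-truncated, and therefore $X$ is $(n+1)$-truncated.

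Finally, $E$ is $(n+1)$-truncated by a formal composition argument: $p$ is $n$-truncated, hence $(n+1)$-truncated, and $X\rightarrow 1$ is $(n+1)$-truncated by the previous step, so the composite $E\rightarrow X\rightarrow 1$ is $(n+1)$-truncated, i.e.\ $E$ is $(n+1)$-truncated. The main obstacle is the middle step: identifying $\Delta_X$ with the object of equivalences via univalence and then controlling the truncation level of $\Eq_{/X}(E)$. The presentation of the object of equivalences as a subobject of an internal hom is exactly the tool that makes the truncation bound transparent; once it is in place, everything else is a routine application of the closure properties of truncated maps.
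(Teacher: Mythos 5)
Your proof is correct and follows essentially the same route as the paper's: use univalence to identify $\Delta_X$ with $\Eq_{/X}(p)$, bound the truncation of the latter by realizing it as a subobject of the internal hom $(\id_X\times p)^{(p\times\id_X)}$, which is $n$-truncated because $\id_X\times p$ is a pullback of $p$ and the relevant dependent product is a right adjoint, and then deduce the claim for $E$ by composing $(n+1)$-truncated maps. The only difference is that you spell out the subobject step (using $n\geq -1$) and the final bootstrapping to $E$, which the paper leaves implicit.
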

\begin{proof}
It suffices to show that $X$ is $(n+1)$-truncated, that is, that $\Delta X$ is $n$-truncated. By univalence, $\Delta X\simeq\Eq_{/X}(p)$, and $\Eq_{/X}(p)$ is a subobject in $\E_{/X^2}$ of  $(\id_X\times p)^{(p\times \id_X)}$, so we can show that $(\id_X\times p)^{(p\times \id_X)}$ is $n$-truncated. Since $\id_X\times p$ is a pullback of $p$, it is an $n$-truncated object of $\E_{/X^2}$. But then $$(\id_X\times p)^{(p\times \id_X)}=\prod_{p\times\id_X} (p\times\id_X)^\ast{(\id_X\times p)}$$
is also $n$-truncated, because right adjoints preserve $n$-truncated objects (see \cite[Prop.~5.5.6.16]{htt}).
\end{proof}
\begin{corollary}
\label{cor:classmapofmono}
Let $p\colon E\rightarrow X$ be the classifying map of monomorphisms in an $\infty$-topos. Then $X$ is $0$-truncated and $E$ is contractible. In particular, $p$ is the subobject classifier of $\tau_{\leq 0}(\E)$, the ordinary $1$-topos of $0$-truncated objects of $\E$. 
\end{corollary}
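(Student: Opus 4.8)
The plan is to extract both truncation claims from the two results just established and then prove contractibility of $E$ by a direct computation of its mapping spaces. First I would observe that, by \cref{def:classifyingmap}, the classifying map $p$ of monomorphisms is a terminal object of $\Cart(S)$ for $S$ the class of monomorphisms, hence is itself a monomorphism, i.e.\ a $(-1)$-truncated map. By \cref{prop:localclassmapsareuniv}, $p$ is univalent, so \cref{lm:univtuncmap} applied with $n=-1$ shows that both $E$ and $X$ are $0$-truncated. This already settles the truncation half of the statement; in particular $\E(T,X)$ is a set for every $T\in\E$.

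For the contractibility of $E$, I would show that $\E(T,E)$ is contractible for all $T$. Setting $\chi := p\circ f$ for a map $f\colon T\to E$ and using the universal property of the pullback, one identifies
$$
\E(T,E)\;\simeq\;\sum_{\chi\in\E(T,X)}\mathrm{Sect}(\chi^\ast p),
$$
where $\chi^\ast p\colon T\times_X E\to T$ is the monomorphism classified by $\chi$ and $\mathrm{Sect}$ denotes the space of sections. The key point is that a monomorphism admits a section if and only if it is an equivalence (left cancellability forces the section to be a two-sided inverse), and in that case the section is unique; thus each summand $\mathrm{Sect}(\chi^\ast p)$ is contractible when $\chi^\ast p$ is an equivalence and empty otherwise. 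Hence $\E(T,E)$ is equivalent to the subspace of those $\chi\in\E(T,X)$ that classify the full subobject $\id_T\colon T\into T$.

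It then remains to see that this subspace is contractible. Since $X$ is $0$-truncated, $\E(T,X)$ is a set, so I only need it to be a singleton: it is nonempty, as it contains the map $\mathrm{true}_T\colon T\to 1\to X$ classifying $\id_T$, and any two of its points classify equivalent (full) subobjects, hence agree by the essential uniqueness built into the classifying property of $p$. This proves $E\equiv 1$. Finally, with $E$ contractible, $p$ takes the form $\mathrm{true}\colon 1\to X$ with $X$ $0$-truncated; since a monomorphism into a $0$-truncated object has $0$-truncated domain, the classification of monomorphisms in $\E$ restricts to a classification of monomorphisms between $0$-truncated objects, exhibiting $\mathrm{true}\colon 1\to X$ as the subobject classifier of $\tau_{\leq 0}(\E)$.

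The main obstacle is the contractibility of $E$: everything hinges on the characterization of sections of a monomorphism together with the uniqueness clause of univalence, and one must be careful that the reduction of $\E(T,E)$ to a subspace of $\E(T,X)$ is carried out at the level of spaces — which is exactly the step where $0$-truncatedness of $X$ is used to collapse the relevant mapping space to a set.
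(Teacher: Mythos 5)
Your proposal is correct, but the contractibility of $E$ is argued by a genuinely different route from the paper's. Both proofs begin identically: $p$ is a monomorphism and univalent (\cref{prop:localclassmapsareuniv}), so \cref{lm:univtuncmap} with $n=-1$ gives that $E$ and $X$ are $0$-truncated. For $E\simeq 1$, the paper makes a single ``global'' application of the uniqueness clause: since $E$ has a global element, $E\to 1$ is an effective epimorphism, and $\id_E$ is exhibited as the pullback of $p$ both along $p$ itself (as $p$ is a monomorphism) and along the composite $E\twoheadrightarrow 1\xrightarrow{\name{1}}X$; terminality of $p$ in $\Cart(S)$ forces these two maps to agree, so $p$ factors as $E\twoheadrightarrow 1\rightarrowtail X$, and uniqueness of (effective epi, mono)-factorizations forces $E\to 1$ to be an equivalence. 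You instead verify representably that $\E(T,E)$ is contractible for every $T$, by fibering $\E(T,E)$ over $\E(T,X)$ with fibers the section spaces of the classified monomorphisms, observing these are empty or contractible, and then using $0$-truncatedness of $X$ together with essential uniqueness of classification to collapse the remaining subspace of $\E(T,X)$ to a point. Both arguments lean on the same two inputs ($0$-truncatedness of $X$ and the essential uniqueness in \cref{def:classifyingmap}); the paper's is shorter and stays entirely inside $\E$, while yours is more computational and makes explicit \emph{why} $E$ is the ``space of pointed propositions'': it isolates the general fact that a monomorphism has a (necessarily unique) section exactly when it is an equivalence, which is a reusable observation the paper's factorization trick bypasses.
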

\begin{proof}
We show that $E$ is contractible. By \cref{lm:univtuncmap} above, $X$ is $0$-truncated. Since $\id_1$ is a monomorphism, there is a pullback square
$$
\bfig
\node 1(0,0)[1]
\node e(400,0)[E]
\node 1b(0,-250)[1]
\node x(400,-250)[X]

\arrow|a|[1`e;\ast]
\arrow|b|[1b`x;\name{1}]
\arrow|l|[1`1b;\id_1]
\arrow|r|[e`x;p]

\place(70,-70)[\angle]

\efig
$$
for some map $\name{1}\colon 1\rightarrow X$, which is a monomorphism because $X$ is $0$-truncated. Since $E$ has a global element, the map $E\rightarrow 1$ is an effective epimorphism. The composite square
$$
\bfig
\node e1(-600,0)[E]
\node e2(-600,-300)[E]
\node 1(0,0)[1]
\node e(600,0)[E]
\node 1b(0,-300)[1]
\node x(600,-300)[X]

\arrow/->>/[e1`1;]
\arrow/->>/[e2`1b;]
\arrow|l|[e1`e2;\id_E]
\arrow|a|[1`e;\ast]
\arrow|b|[1b`x;\name{1}]
\arrow|l|[1`1b;\id_1]
\arrow|r|[e`x;p]

\place(70,-70)[\angle]
\place(-530,-70)[\angle]

\efig
$$
shows that $\id_E$ is the pullback of $p$ along $E\twoheadrightarrow 1\xrightarrow{\name{1}}X$. Because $p$ is a monomorphism, $\id_E$ is also the pullback of $p$ along itself. Since $p$ is a classifying map, we then get that the monomorphism $p$ has a (effective epi,mono)-factorization $E\twoheadrightarrow 1\rightarrowtail X$, so that $E\rightarrow 1$ has to be an equivalence, as needed.
\end{proof}
\begin{notation}
\label{notation:subobjclass}
Given \cref{cor:classmapofmono}, we follow the traditional convention in topos theory and denote by $t\colon 1\rightarrowtail \Omega$ the classifying map for monomorphisms in $\E$.
\end{notation}

\section{Reflective subfibrations and classifying maps}\label{sec:univfromreflsubf}
We introduce here the notion of \emph{reflective subfibrations} $L_\bullet$ on an $\infty$-topos $\E$ (\cite[\S A.2]{modinhott}). This is a collection of pullback-stable reflective subcategories $\D_{X}$ of $\E_{/X}$, with reflector $L_X$, as $X$ varies in $\E$. Objects of $\D_X$ are called $L$\emph{-local maps}.\par 
In \cref{sec:reflsubf}, we discuss some properties of reflective subfibrations. In \cref{sec:classofllocalmaps}, we show that the class of $L$-local maps form a local class of maps (\cref{prop:llocalmapsarelocal}), thus admitting a univalent classifying map (\cref{thm:llocalmapshaveaclassifyingmap}). We can use this observation to link reflective subfibrations on $\E$ to reflective subuniverses in homotopy type theory, as given in \cite{locinhott} and in \cite{modinhott}. 

\subsection{Reflective subfibrations}
\label{sec:reflsubf}
We give here the definition of reflective subfibrations on an $\infty$-topos $\E$ and derive some of their immediate properties.

\begin{definition}[{\cite[\S A.2]{modinhott}}]
\label{def:refsub}
Let $\E$ be an $\infty$-topos.
\begin{enumerate}
\item A \emph{system of reflective subcategories (srs)} $L_\bullet$ on $\E$ is the assignment, for each $X\in\E$, of an $\infty$-category $\D_X$ such that:
\begin{itemize}
\item Each $\D_X$ is a reflective $\infty$-subcategory of $\E_{/X}$, with associated localization functor $L_X=\colon \E_{/X}\rightarrow\E_{/X}$. This is the composite of the reflector of $\E_{/X}$ into $\D_X$ and the inclusion of $\D_X$ into $\E_{/X}$. When $X=1$, we write $\D$ for $\D_1$ and $L$ for $L_1$. 
\item For every map $f\colon X\rightarrow Y$ in $\E$, the pullback functor $f^\ast\colon\E_{/Y}\rightarrow\E_{/X}$ restricts to a functor $\D_Y\rightarrow\D_X$ which we still denote by $f^\ast$.
\end{itemize}
\item An srs $L_{\bullet}$ on $\E$ is a \emph{reflective subfibration} on $\E$, if, for any $f\colon X\rightarrow Y$ in $\E$ and any $p\in \E_{/Y}$, the induced map $L_X(f^\ast p)\rightarrow f^\ast(L_Y p)$ is an equivalence.
\item An srs $L_{\bullet}$ on $\E$ is \emph{composing} if, whenever $p\colon X\rightarrow Y$ is in $\D_Y$ and $q\colon Y\rightarrow Z$ is in $\D_Z$, the composite $qp$ is in $\D_Z$.
\item A \emph{modality} on $\E$ is a composing reflective subfibration $L_{\bullet}$ on $\E$.
\end{enumerate}  
\end{definition}
\begin{remark}
\label{rmk:localcharacterofreflsubf}
For every object $X\in\E$ and every map $f\colon Y\rightarrow X$, we have that $(\E_{/X})_{/f}\simeq \E_{/Y}$ (see \cite[Lemma~4.18]{joyalconjhott}). Therefore, for each $X\in\E$, a reflective subfibration $L_{\bullet}$ induces a reflective subfibration $L^{/X}_{\bullet}$ of $\E_{/X}$ by taking $\D^{/X}_{f}$ to be $\D_Y$. It follows that all the results we give below about reflective subfibrations on an $\infty$-topos also hold ``locally" in the $\infty$-topos $\E_{/X}$, for $X\in\E$.
\end{remark}
From now on, we fix a reflective subfibration $L_{\bullet}$ on our favorite $\infty$-topos $\E$. 

\begin{notation}\label{not:stuffaboutreflsub} We adopt the following notation for the rest of this work.

\begin{itemize}
\item A morphism $p\colon E\rightarrow X$ is called $L$-\emph{local} if, seen as an object of $\E_{/X}$, it is in $\D_X$. We call $E\in\E$ an $L$-\emph{local object} if $E\rightarrow 1$ is an $L$-local map. 
\item For $X\in\E$, $S_X$ denotes the class of all $L_X$-\emph{equivalences}, i.e., maps $\alpha$ in $\E_{/X}$ such that $L_X(\alpha)$ is an equivalence. Equivalently, $S_X=\prescript{\perp}{}{\D_X}$, where $\prescript{\perp}{}{\D_X}$ denotes the class of maps in $\E_{/X}$ which are left orthogonal to maps in $\D_X$. When it is clear that $\alpha$ is a map in $\E_{/X}$, we often drop the explicit reference to the object $X$, and just talk about $L$-\emph{equivalences}. 
\item Given $p\in\E_{/X}$, we write $\eta_X(p)\colon p\rightarrow L_X(p)$ for the reflection (or localization) map of $p$ into $\D_X$. Note that $\eta_X(p)\in S_X$. For $X\in\E$, we set $\eta(X):=\eta_1(X)$. 
\end{itemize}
\end{notation}
Given a map $f$ in $\E$, we denote by $\Sigma_f$ and by $\Pi_f$ the left and right adjoint to the pullback functor $f^\ast$, respectively. We will use the following remarks extensively.

\begin{lemma}
\label{lm:lequivandpullbdepsum}
Given $f\colon X\rightarrow Y$, we have:
\begin{itemize}
\item[(i)]$f^\ast(S_Y)\subseteq S_X$, that is, if $\alpha\colon p\rightarrow q$ is an $L_Y$-equivalence, then the induced map $f^\ast(p)\rightarrow f^\ast (q)$ on pullbacks is an $L_X$-equivalence;
\item[(ii)]$\Sigma_f(S_X)\subseteq S_Y$. 
\end{itemize}
\end{lemma}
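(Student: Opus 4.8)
The plan is to handle the two statements separately: (i) follows directly from the defining comparison equivalence of a reflective subfibration, while (ii) is cleanest through the orthogonality characterization $S_X=\prescript{\perp}{}{\D_X}$ combined with the adjunction $\Sigma_f\dashv f^\ast$.

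For (i), let $\alpha\colon p\to q$ be an $L_Y$-equivalence, so that $L_Y(\alpha)$ is an equivalence in $\E_{/Y}$. First I would apply $f^\ast$ and use naturality of the comparison transformation $L_X(f^\ast(-))\to f^\ast(L_Y(-))$ to assemble the commutative square whose horizontal arrows are its components $L_X(f^\ast p)\to f^\ast(L_Y p)$ and $L_X(f^\ast q)\to f^\ast(L_Y q)$, and whose vertical arrows are $L_X(f^\ast\alpha)$ and $f^\ast(L_Y\alpha)$. Both horizontal arrows are equivalences by \cref{def:refsub}(2), and $f^\ast(L_Y\alpha)$ is an equivalence since $f^\ast$ preserves equivalences; so the two-out-of-three property forces $L_X(f^\ast\alpha)$ to be an equivalence, i.e. $f^\ast\alpha\in S_X$. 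This part is essentially immediate once the naturality square is in place.

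For (ii), I would argue via orthogonality rather than attempt to compute $L_Y(\Sigma_f\alpha)$ directly, since the reflective subfibration axiom only compares $L$ with $f^\ast$ and provides no Beck--Chevalley relation between $L_Y$ and $\Sigma_f$. Fix $\alpha\colon p\to q$ in $S_X=\prescript{\perp}{}{\D_X}$ and an arbitrary $s\in\D_Y$; the goal is $\Sigma_f(\alpha)\perp_Y s$. The adjunction $\Sigma_f\dashv f^\ast$ supplies an equivalence $\Map_{\E_{/Y}}(\Sigma_f(-),s)\simeq\Map_{\E_{/X}}(-,f^\ast s)$ natural in the first variable, so precomposition with $\Sigma_f(\alpha)$ on the left is identified with precomposition by $\alpha$ on the right. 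Since $s\in\D_Y$, \cref{def:refsub}(1) gives $f^\ast s\in\D_X$, whence $\alpha\perp_X f^\ast s$; transporting this equivalence of mapping spaces across the adjunction yields $\Sigma_f(\alpha)\perp_Y s$. As $s\in\D_Y$ was arbitrary, $\Sigma_f(\alpha)\in\prescript{\perp}{}{\D_Y}=S_Y$.

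The hard part will be making the adjunction--orthogonality transfer in (ii) precise in the $\infty$-categorical setting: one must verify that the natural equivalence from $\Sigma_f\dashv f^\ast$ is genuinely compatible with the relevant precomposition maps on mapping spaces, so that ``$\alpha$ induces an equivalence on $\Map_{\E_{/X}}(-,f^\ast s)$'' transfers to ``$\Sigma_f(\alpha)$ induces an equivalence on $\Map_{\E_{/Y}}(-,s)$.'' Everything else reduces to the two structural inputs already recorded, namely the comparison equivalence of \cref{def:refsub}(2) and the stability of $\D_Y$ under $f^\ast$.
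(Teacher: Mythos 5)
Your proof is correct and follows essentially the same route as the paper: part (i) rests on the comparison equivalence $L_X(f^\ast(-))\simeq f^\ast(L_Y(-))$ (equivalently, $\eta_X(f^\ast q)=f^\ast(\eta_Y(q))$) together with two-out-of-three, and part (ii) is the same adjunction transfer of orthogonality across $\Sigma_f\dashv f^\ast$ using that $f^\ast$ restricts to $\D_Y\rightarrow\D_X$. The only cosmetic difference is that the paper tests orthogonality against morphisms $\beta\colon r\rightarrow s$ in $\D_Y$ rather than against objects $s\in\D_Y$, which is an equivalent formulation for a reflective subcategory.
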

\begin{proof}
Given $q\in\E_{/Y}$, $\eta_X(f^\ast q)=f^\ast (\eta_Y( q))$, so that, in particular, $f^\ast (\eta_Y (q))\in S_X$. Since, given a map $\alpha\colon q\rightarrow q'$ in $\E_{/Y}$, $L_Y(\alpha)$ is the unique map with $L_Y(\alpha)\circ \eta_Y (q)=\eta_Y (q')\circ\alpha$, the first claim follows immediately. The second claim follows by an adjunction argument: if $\alpha\in S_X$, given a map $\beta\colon r\rightarrow s$ in $\D_Y$, $\Sigma_f(\alpha)\perp_Y\beta\iff \alpha\perp_X f^\ast(\beta)$ and the latter holds since $f^\ast$ restricts to a functor $\D_Y\rightarrow\D_X$. 
\end{proof} 

\begin{proposition}\label{prop:propofreflsub}The following hold for any $X\in\E$.\begin{itemize}
\item[(i)]$p\in\E_{/X}$ is in $\D_X$ if and only if $\alpha\perp_X (p\rightarrow \id_X)$ for each $\alpha \in S_X$.
\item[(ii)] If $r\in\D_X$ and $f\colon X\rightarrow Y$ is any map in $\E$, then $\prod_f r$ is in $\D_Y$.
\item[(iii)]$\D_X$ is an exponential ideal in $\E_{/X}$, i.e., if $r\in\D_X$ and $p\in \E_{/X}$, then the internal hom $r^p$ is also in $\D_X$.
\item[(iv)]$L_X$ preserves products and $S_X$ is closed under products in $\E_{/X}$.
\item[(v)] A map $\alpha\colon p\rightarrow q$ is in $S_X$ if and only if, for each $r\in \D_X$, the map of internal homs $r^\alpha\colon r^q\rightarrow r^p$ is an equivalence.
\item[(vi)] $p\in\E_{/X}$ is in $\D_X$ if and only if $p^\alpha$ is an equivalence for each $\alpha\in S_X$. 
\end{itemize}
\end{proposition}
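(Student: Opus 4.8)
The plan is to prove the six statements in the order (i), (ii), (iii), then (v), and finally (vi) and (iv), since each later item rests on the earlier ones. Item (i) is just the defining feature of the reflective localization $L_X$ unwound through the equality $S_X = \prescript{\perp}{}{\D_X}$. The forward implication is immediate: if $p\in\D_X$ then every $\alpha\in S_X$ is by construction left orthogonal to each object of $\D_X$, in particular to $p$. For the converse I would test the hypothesis against the localization map $\eta_X(p)\in S_X$; the orthogonality $\eta_X(p)\perp_X(p\rightarrow\id_X)$ forces $\Map_{/X}(L_X p,p)\rightarrow\Map_{/X}(p,p)$ to be an equivalence, and a standard argument then shows the unit $\eta_X(p)$ is itself an equivalence, whence $p\in\D_X$ by repleteness of $\D_X$.

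For (ii) I would combine (i) with the adjunction $f^\ast\dashv\Pi_f$. By (i) it suffices to check that $\Pi_f r$ is right orthogonal to every $\beta\in S_Y$; transposing along the adjunction, $\beta\perp_Y\Pi_f r$ is equivalent to $f^\ast\beta\perp_X r$, and $f^\ast\beta\in S_X$ by \cref{lm:lequivandpullbdepsum}(i), so the orthogonality holds because $r\in\D_X$ and (i). Item (iii) is then formal: in the locally cartesian closed $\E_{/X}$ the internal hom is $r^p\simeq\Pi_p(p^\ast r)$, and since $p^\ast$ restricts to a functor $\D_X\rightarrow\D_{\dom p}$ (part of the definition of an srs), $p^\ast r$ is local, so (ii) applied with $f=p$ gives $r^p\simeq\Pi_p(p^\ast r)\in\D_X$.

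The technical heart of the remaining items is the observation that $S_X$ is stable under forming products with a fixed object: for $s\in\E_{/X}$ one has $s\times_X(-)\simeq\Sigma_s s^\ast$, so if $\alpha\in S_X$ then $s\times_X\alpha\simeq\Sigma_s(s^\ast\alpha)$ lies in $S_X$ by the two clauses of \cref{lm:lequivandpullbdepsum}. With this in hand, (v) follows from an internal-hom computation: $r^\alpha$ is an equivalence in $\E_{/X}$ precisely when $\Map_{/X}(s,r^q)\rightarrow\Map_{/X}(s,r^p)$ is an equivalence for every $s$, which by the internal-hom adjunction is the statement that $s\times_X\alpha$ is right orthogonal to $r$. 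Thus ``$r^\alpha$ is an equivalence for all $r\in\D_X$'' is equivalent to ``$s\times_X\alpha\in S_X$ for all $s$''; the forward direction of (v) is the stability just proved, and the backward direction specializes to $s=\id_X$.

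Finally (vi) and (iv) drop out. The forward direction of (vi) is (v) with $r=p\in\D_X$, while the converse follows from (i) after taking global sections of the equivalence $p^\alpha$, since $\Map_{/X}(\id_X,p^\alpha)$ recovers exactly the orthogonality $\alpha\perp_X(p\rightarrow\id_X)$. For (iv), factoring a product of $L$-equivalences $\alpha\times_X\beta$ through $p\times_X\beta$ and $\alpha\times_X q'$ shows $S_X$ is closed under products, as $S_X$ is closed under composition and each factor lies in $S_X$ by the stability sublemma; applying this to $\eta_X(p)\times_X\eta_X(q)$ produces an $L$-equivalence with target $L_X p\times_X L_X q$, which is local because $\D_X$ (being reflective) is closed under products, whence $L_X(p\times_X q)\simeq L_X p\times_X L_X q$. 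I expect the main obstacle to be setting up (v) cleanly: the passage between the internal equivalence $r^\alpha$ and the external family of orthogonality conditions $\{\,s\times_X\alpha\perp_X r\,\}$, together with the identity $s\times_X(-)\simeq\Sigma_s s^\ast$ that makes \cref{lm:lequivandpullbdepsum} applicable.
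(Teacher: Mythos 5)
Your proposal is correct, and for items (i)--(iii) it is essentially the paper's proof: the retraction argument for the converse of (i), the adjunction $f^\ast\dashv\Pi_f$ together with \cref{lm:lequivandpullbdepsum}(i) for (ii), and the formula $r^p\simeq\Pi_p(p^\ast r)$ for (iii). Where you genuinely diverge is in (v). The paper's forward direction exploits the exponential-ideal property directly (showing $r^{\eta_X(p)}$ is an equivalence and then identifying $r^\alpha$ with $r^{L_X(\alpha)}$ in the arrow category), and its converse runs through an explicit pullback square $r^q\to s^q\times_{s^p}r^p$ to verify $\alpha\perp_X\beta$ for every \emph{morphism} $\beta$ of $\D_X$. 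You instead isolate the identity $s\times_X(-)\simeq\Sigma_s s^\ast$, so that \cref{lm:lequivandpullbdepsum} immediately gives $s\times_X\alpha\in S_X$, and then translate ``$r^\alpha$ is an equivalence'' into the external family of conditions $s\times_X\alpha\perp_X(r\to\id_X)$ via Yoneda; the converse is then just the specialization $s=\id_X$. This is a clean alternative that also does the work for (iv) and (vi), which the paper dispatches with ``it is straightforward to check'' and ``follows from (i)''---your fleshed-out arguments for those two items (the interchange factorization of $\alpha\times_X\beta$, and the recovery of $\alpha\perp_X p$ from $\Map_{/X}(\id_X,p^\alpha)$) are exactly what the paper is gesturing at. Two trivial remarks: in (v) you say $s\times_X\alpha$ is ``right orthogonal'' to $r$ where you mean \emph{left} orthogonal; and your product argument in (iv) as written handles binary (hence finite) products, which is all the paper's own one-line justification supports either.
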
 
\begin{proof}
For the first claim, since $S_X=\prescript{\perp}{}{\D_X}$, $\D_X\subseteq (S_X)^\perp$. On the other hand, if $p$ (that is, $p\rightarrow \id_X)$ is right orthogonal to $S_X$, then there is a map $\gamma\colon L_X(p)\rightarrow p$ with $\gamma\circ\eta_X(p)=\id_p$ and it is easy to see that $\eta_X(p)$ is then an equivalence.\par 
As for (ii), given $r\in\D_X$, $f\colon X\rightarrow Y$ and $\alpha\in S_Y$, adjointness gives that $\alpha\perp_Y\prod_f r\iff f^\ast (\alpha)\perp_X r$ and the latter orthogonality condition holds by \cref{lm:lequivandpullbdepsum} (i) and because $r$ is in $\D_X$. Since internal homs can be constructed via pullbacks and dependent products, it follows that $\D_X$ is an exponential ideal, establishing (iii). It is straightforward to check that this latter condition is equivalent to $L_X$ preserving and $S_X$ being closed under products in $\E_{/X}$, proving (iv).\par
As for (v), $\D_X$ being an exponential ideal implies that, for every $p\in\E_{/X}$ and $r\in\D_X$, $r^{\eta_X(p)}\colon r^{L_X(p)}\rightarrow r^{p}$ is an equivalence. Thus, if $\alpha\colon p\rightarrow q$ is in $S_X$, then $r^\alpha$ is equivalent in $\E^{\bullet\rightarrow\bullet}$ to the equivalence $r^{L_X(\alpha)}$. Conversely, if $r^\alpha$ is an equivalence for every $r\in\D_X$, then, given a map $\beta\colon r\rightarrow s$ in $\D_X$, consider the diagram
\begin{equation*}
\bfig
\node rq(-50,-100)[r^q]
\node sqxsprp(300,-300)[s^q\times_{s^p}r^p]
\node sq(300,-600)[s^q]
\node rp(800,-300)[r^p]
\node sp(800,-600)[s^p]

\arrow/{@{>}@/_15pt/}/[rq`sq;]
\arrow|a|/{@{>}@/^15pt/}/[rq`rp;r^\alpha]
\arrow[sqxsprp`sq;]
\arrow|a|[sqxsprp`rp;]
\arrow|b|[sq`sp;s^\alpha]
\arrow[rp`sp;]
\arrow||/-->/[rq`sqxsprp;]
\place(370,-390)[\angle]
\efig
\end{equation*}
Since $s^\alpha$ and $r^\alpha$ are equivalences, the dotted map is also such, which implies that $\alpha\perp_X\beta$. Finally, (vi) follows from (i), using closure under products of $S_X$.
\end{proof}
\begin{remark}
By \cref{prop:propofreflsub} (i) and (v), a map $\alpha$ in $\E_{/X}$ is such that, for every $r\in \D_X$, $\E_{/X}(\alpha,r)$ is an equivalence of $\infty$-groupoids if and only if $r^{\alpha}$ is an equivalence in $\E$. In this case, $\alpha\in S_X$. Similarly, $t\in \E_{/X}$ is in $\D_X$ if and only if, for every $\alpha\in S_X$, $\E_X(\alpha,t)$ is an equivalence, if and only if $t^\alpha$ is an equivalence. The external-hom description of $L$-equivalences is used in higher category theory, whereas the internal-hom description is the one available in homotopy type theory. (In fact, homotopy type theory can not even \emph{state} the external description, which provides some added value to the homotopy theoretic approach to localization.) Reflective subfibrations are defined so that these two perspectives coincide. 
 
\end{remark}

\begin{proposition}
\label{prop:weakcompproplocmap}
Suppose given composable maps $X\stackrel{f}{\rightarrow}Y\stackrel{g}{\rightarrow}Z$ in $\E$ such that $g,gf\in\D_Z$. Then $f\in\D_Y$. 
\end{proposition}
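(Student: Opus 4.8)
The plan is to verify the defining orthogonality characterization of $\D_Y$ directly, without ever appealing to a composition property for local objects. By \cref{prop:propofreflsub}(i) and the remark following it, showing $f\in\D_Y$ amounts to showing that for every $\alpha\colon p\to q$ in $S_Y$ the precomposition map $\E_{/Y}(\alpha,f)\colon\E_{/Y}(q,f)\to\E_{/Y}(p,f)$ is an equivalence of $\infty$-groupoids, where $f$ is viewed as the object $f\colon X\to Y$ of $\E_{/Y}$. So I would fix such an $\alpha$ and aim to identify these mapping spaces with homotopy fibers of mapping spaces in $\E_{/Z}$, thereby transporting the question from $Y$ to $Z$, where the two hypotheses $g,gf\in\D_Z$ live.

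The key step uses the equivalence $(\E_{/Z})_{/g}\simeq\E_{/Y}$ of \cref{rmk:localcharacterofreflsubf}: under it the object $f\colon X\to Y$ of $\E_{/Y}$ corresponds to the map $f\colon gf\to g$ of $\E_{/Z}$, and any $r\in\E_{/Y}$ corresponds to $\Sigma_g r$ together with its structure map $r\colon\Sigma_g r\to g$ in $\E_{/Z}$. The standard description of slice mapping spaces as homotopy fibers then gives, naturally in $r$,
$$
\E_{/Y}(r,f)\;\simeq\;\hofib_{r}\bigl(\E_{/Z}(\Sigma_g r,gf)\xrightarrow{\,f\circ(-)\,}\E_{/Z}(\Sigma_g r,g)\bigr),
$$
the fiber being taken over the point picked out by the structure map of $r$.

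Applying this to $r=p$ and $r=q$ and letting $\alpha$ act yields a map of fiber sequences whose maps on total spaces and on bases are $\E_{/Z}(\Sigma_g\alpha,gf)$ and $\E_{/Z}(\Sigma_g\alpha,g)$, respectively. By \cref{lm:lequivandpullbdepsum}(ii) we have $\Sigma_g\alpha\in S_Z$, so since $gf\in\D_Z$ and $g\in\D_Z$, both of these maps are equivalences by \cref{prop:propofreflsub}(i). The relevant basepoints correspond, because the base map $\E_{/Z}(\Sigma_g\alpha,g)$ sends the structure map of $q$ to $q\circ\alpha=p$, the structure map of $p$. Hence the induced map on homotopy fibers, which is exactly $\E_{/Y}(\alpha,f)$, is an equivalence, which is what we wanted.

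The step I expect to be the main obstacle is precisely the pitfall this argument is designed to avoid. A naive approach would push the localization map $\eta_Y(f)$ forward along $\Sigma_g$ and try to argue that both its endpoints are $L$-local; but this requires $\Sigma_g$ to preserve $L$-local objects, which is the \emph{composing} hypothesis and is not assumed here. The fiber-sequence computation sidesteps this, as it only ever uses that $g$ and $gf$ (rather than $g\circ L_Y(f)$) are local, together with the fact that $\Sigma_g$ preserves $L$-equivalences. The delicate point to get right is the basepoint bookkeeping: one must check that the two homotopy fibers are taken over corresponding basepoints, since it is this compatibility that licenses the comparison of fibers from the equivalences on total spaces and bases.
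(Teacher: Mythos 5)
Your proposal is correct and follows essentially the same route as the paper's proof: both reduce to showing $\E_{/Y}(\alpha,f)$ is an equivalence for each $\alpha\in S_Y$, identify $\E_{/Y}(r,f)$ with the homotopy fiber of $\E_{/Z}(\Sigma_g r,gf)\to\E_{/Z}(\Sigma_g r,g)$ over the structure map of $r$, and conclude via \cref{lm:lequivandpullbdepsum}(ii) and the hypotheses $g,gf\in\D_Z$ that the maps on total spaces and bases (and hence on fibers, after the same basepoint check) are equivalences.
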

\begin{proof}
Let $\alpha\colon p\rightarrow q$ be a map in $\E_{/Y}$ which is an $L_Y$-equivalence. For $f$ to be in $\D_{Y}$, the induced map of $\infty$-groupoids
$
\E_{/Y}(\alpha,f)\colon\E_{/Y}(q,f)\rightarrow\E_{/Y}(p,f)
$
needs to be an equivalence. We will prove this by realizing such a map as the comparison map of fiber sequences in a pullback square. Consider the map $\Sigma_g (\alpha)\colon gp\rightarrow gq$ and let $\li{f}\colon gf\rightarrow g$ be the map induced by $f$. We have similar maps $\li{q}\colon gq\rightarrow g$ and $\li{p}\colon gp\rightarrow g$. By \cref{lm:lequivandpullbdepsum} (ii), $\Sigma_g(\alpha)$ is an $L_{Z}$-equivalence. Since both $g$ and $gf$ are in $\D_{Z}$ by hypothesis, the vertical maps in the commutative square
$$
\bfig
\node egqgf(0,0)[\E_{/Z}(gq,gf)]
\node egqg(1000,0)[\E_{/Z}(gq,g)]
\node egpgf(0,-300)[\E_{/Z}(gp,gf)]
\node egpg(1000,-300)[\E_{/Z}(gp,g)]

\arrow|a|[egqgf`egqg;\E_{/Z}(gq,\li{f})]
\arrow|l|[egqgf`egpgf;\E_{/Z}\left(\Sigma_g(\alpha),gf\right)]
\arrow|r|[egqg`egpg;\E_{/Z}\left(\Sigma_g(\alpha),g\right)]
\arrow|b|[egpgf`egpg;\E_{/Z}(gp,\li{f})]
\efig
$$
are equivalences. We can now take the induced map on fiber sequences. By the dual of \cite[Lemma 5.5.5.12]{htt}, we have:
$$
\hofib_{\li{q}}\left(\E_{/Z}(gq,\li{f})\right)=\left(\E_{/Z}\right)_{/g}(\li{q},\li{f})\simeq \E_{/Y}(q,f)
$$
As $\li{q}\left(\Sigma_g (\alpha)\right)=\li{p}$, we also get that 
$
\hofib_{\li{q}\left(\Sigma_g (\alpha)\right)}(\E_{/Z}(gp,\li{f}))\simeq\E_{/Y}(p,f)
$
and the induced map on fibers is $\E_{/Y}(\alpha, f)$.
\end{proof}
\begin{corollary}
If $X,Y$ are $L$-local objects, then any map $f\colon X\rightarrow Y$ is $L$-local. In particular, if $L$ is a modality and $X\in\D$, then $\D_{X}=\D_{/X}$.\qed
\end{corollary}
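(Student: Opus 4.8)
The plan is to read off both assertions from \cref{prop:weakcompproplocmap} together with the composing property of a modality, in each case taking the terminal object $1$ as the common target. For the first statement, suppose $X,Y$ are $L$-local objects and $f\colon X\to Y$ is any map. Consider the composable pair $X\xrightarrow{f}Y\to 1$, so that in the notation of \cref{prop:weakcompproplocmap} we have $Z=1$, $g=(Y\to 1)$ and $gf=(X\to 1)$. By hypothesis $g\in\D_1=\D$ and $gf\in\D_1=\D$, i.e.\ $g,gf\in\D_Z$. \Cref{prop:weakcompproplocmap} then yields $f\in\D_Y$ directly, which is exactly the claim that $f$ is $L$-local. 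Note that this part uses only that $L_\bullet$ is a reflective subfibration, not that it is a modality.

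For the ``in particular'' statement, I would first unwind the notation. Since $X\in\D$, the slice $\D_{/X}$ is a full subcategory of $\E_{/X}$ whose objects are precisely the maps $p\colon E\to X$ with $E$ an $L$-local object, while $\D_X$ is the full subcategory of $L$-local maps over $X$. So it suffices to check that these two classes of objects of $\E_{/X}$ coincide. For the inclusion $\D_X\subseteq\D_{/X}$, take $p\colon E\to X$ in $\D_X$. Since $X$ is $L$-local, $X\to 1\in\D_1$, and since $L_\bullet$ is a modality (hence composing), the composite $E\xrightarrow{p}X\to 1$ lies in $\D_1$; thus $E$ is $L$-local and $p\in\D_{/X}$. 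For the reverse inclusion $\D_{/X}\subseteq\D_X$, take $p\colon E\to X$ with $E$ an $L$-local object. As $X$ is also $L$-local by assumption, the first part of the corollary applies to $p\colon E\to X$ and gives $p\in\D_X$.

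There is essentially no hard step here: both parts are immediate consequences of results already established. The only points requiring care are bookkeeping ones — matching the hypotheses $g,gf\in\D_Z$ of \cref{prop:weakcompproplocmap} to the choice $Z=1$, and noticing that the two inclusions in the ``in particular'' statement draw on different facts (the composing property of the modality for $\D_X\subseteq\D_{/X}$, and the first part of the corollary, which needs no composing hypothesis, for $\D_{/X}\subseteq\D_X$). Since $\D_X$ and $\D_{/X}$ are both full subcategories of $\E_{/X}$, agreement on objects upgrades at once to an equality of $\infty$-categories.
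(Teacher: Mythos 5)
Your proof is correct and follows exactly the route the paper intends: the paper marks this corollary as immediate (\qed in the statement) from \cref{prop:weakcompproplocmap} applied with $Z=1$, which is precisely your argument for the first claim, and the ``in particular'' part follows just as you describe from the composing property of a modality together with the first claim. No gaps.
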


\begin{corollary}
\label{cor:localaresep}
If $g\colon A\rightarrow B$ is $L$-local, then so is $\Delta g\colon A\rightarrow A\times_B A$.
\end{corollary}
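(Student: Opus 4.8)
The plan is to apply the weak composition property of $L$-local maps (\cref{prop:weakcompproplocmap}) to a degenerate factorization of the identity through the diagonal. Write $\pr_1,\pr_2\colon A\times_B A\rightarrow A$ for the two projections, so that $\pr_1\circ\Delta g=\pr_2\circ\Delta g=\id_A$. I would then consider the composable pair
$$
A\xrightarrow{\ \Delta g\ } A\times_B A\xrightarrow{\ \pr_1\ } A,
$$
whose composite is $\id_A$, and verify the hypotheses of \cref{prop:weakcompproplocmap} with $Z=A$, the role of ``$g$'' played by $\pr_1$ and the role of ``$f$'' played by $\Delta g$.

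There are two things to check. First, $\id_A$ is $L$-local: as an object of $\E_{/A}$ it is terminal, and the reflective subcategory $\D_A$ is closed under limits (in particular the empty one), hence contains the terminal object; thus $\pr_1\circ\Delta g=\id_A\in\D_A$. Second, and this is the only point with any content, $\pr_1$ itself is $L$-local. Indeed, $\pr_1$ is the pullback of $g$ along $g$: the defining cartesian square of $A\times_B A$ has top edge $\pr_2$, left edge $\pr_1$, bottom edge $g$ and right edge $g$, so $\pr_1\simeq g^\ast(g)$ as an object of $\E_{/A}$. Since $g\in\D_B$ and, by \cref{def:refsub}, the pullback functor $g^\ast$ restricts to a functor $\D_B\rightarrow\D_A$, we conclude $\pr_1\in\D_A$.

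With both $\pr_1\in\D_A$ and $\pr_1\circ\Delta g=\id_A\in\D_A$ in hand, \cref{prop:weakcompproplocmap} applied to $A\xrightarrow{\Delta g}A\times_B A\xrightarrow{\pr_1}A$ yields $\Delta g\in\D_{A\times_B A}$, i.e.\ $\Delta g$ is $L$-local, as required. I expect no serious obstacle: essentially all the work is already packaged in \cref{prop:weakcompproplocmap}, and the only things to get right are the identification of $\pr_1$ as a pullback of $g$ (so that pullback-stability of the assignment $X\mapsto\D_X$ applies) and the choice of the degenerate composite $\pr_1\circ\Delta g=\id_A$, which supplies the second local map needed to feed the weak composition property.
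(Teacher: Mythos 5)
Your argument is correct and is essentially identical to the paper's proof: both identify $\pr_1\simeq g^\ast(g)\in\D_A$ via pullback-stability, note that $\pr_1\circ\Delta g=\id_A\in\D_A$, and conclude by \cref{prop:weakcompproplocmap}. No issues.
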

\begin{proof}
Since $g\in\D_B$, $g^\ast(g)\colon A\times_B A\rightarrow A$ is in $\D_A$. But $g^\ast(g)\circ\Delta g =\id_A$, so the claim follows by \cref{prop:weakcompproplocmap}, since $\id_A\in\D_A$. 
\end{proof}
\subsection{Classification of $L$-local maps}
\label{sec:classofllocalmaps}
We show here that, for a reflective subfibration $L_\bullet$ on an $\infty$-topos $\E$, the $L$-local maps form a local class of maps in $\E$ and, therefore, they admit a univalent classifying map.\par
Let $\M^L$ is the class of all $L$-local maps. $\M^L$ is stable under pullbacks, by \cref{def:refsub}.
\begin{lemma}
\label{lm:localmapsareclosedundercop}
$\M^L$ is closed under arbitrary small coproducts: if $I$ is a set and $f_j\in\D_{X_j}$ for $j\in I$, then $\coprod_j f_j$ is in $\D_{\left(\coprod_j X_j\right)}$.
\end{lemma}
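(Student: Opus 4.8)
The plan is to exploit the decomposition of the slice $\E_{/X}$ over a coproduct $X=\coprod_j X_j$ into the product of the slices $\E_{/X_j}$, and to observe that this decomposition carries $\coprod_j f_j$ to the family $(f_j)_j$ and the localization $L_X$ to the product of the $L_{X_j}$. Concretely, write $X=\coprod_j X_j$ and let $\iota_k\colon X_k\to X$ denote the coproduct inclusions.

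First I would record the basic computation that, because coproducts in an $\infty$-topos are disjoint and universal, pulling $\coprod_j f_j\colon \coprod_j E_j\to X$ back along $\iota_k$ recovers $f_k$: universality identifies $\iota_k^\ast(\coprod_j E_j)$ with $\coprod_j (X_k\times_X E_j)$, disjointness kills the summands with $j\neq k$ (since $X_k\times_X X_j\simeq \emptyset$, whence $X_k\times_X E_j\simeq\emptyset$) and returns $E_k$ for $j=k$. Thus $\iota_k^\ast(\coprod_j f_j)\simeq f_k$ in $\E_{/X_k}$.

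To conclude that $\coprod_j f_j$ lies in $\D_X$, it suffices to show its localization map $\eta_X(\coprod_j f_j)$ is an equivalence in $\E_{/X}$. A map in $\E_{/X}$ is an equivalence precisely when each of its pullbacks along the $\iota_k$ is, again because coproducts are disjoint and universal (equivalently, because $\{\iota_k\}_k$ is jointly effective-epimorphic and $\E$ has universal colimits). Now the reflective-subfibration condition of \cref{def:refsub}(2) says exactly that the comparison $L_{X_k}(\iota_k^\ast p)\to \iota_k^\ast(L_X p)$ is an equivalence for every $p\in\E_{/X}$; unwinding how this comparison is built from the unit, it identifies $\iota_k^\ast(\eta_X(p))$ with $\eta_{X_k}(\iota_k^\ast p)$. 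Applying this to $p=\coprod_j f_j$ and using the first step, $\iota_k^\ast(\eta_X(\coprod_j f_j))\simeq \eta_{X_k}(f_k)$, which is an equivalence since $f_k\in\D_{X_k}$ is already local. Hence every $\iota_k^\ast(\eta_X(\coprod_j f_j))$ is an equivalence, so $\eta_X(\coprod_j f_j)$ is, and $\coprod_j f_j\in\D_X$.

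The main thing to get right is the two appeals to disjointness and universality of coproducts: the identification $\iota_k^\ast(\coprod_j f_j)\simeq f_k$, and the componentwise detection of equivalences in $\E_{/X}$ along the $\iota_k$. Both are standard facts about $\infty$-topoi (they are encoded in the universality of colimits together with the disjointness of coproducts holding in any $\infty$-topos), but they are where all the $\infty$-topos structure enters; everything else is a formal consequence of the reflective-subfibration axiom. An equally valid packaging is to say that $\E_{/X}\simeq \prod_j \E_{/X_j}$ carries $L_X$ to $\prod_j L_{X_j}$, hence $\D_X$ to $\prod_j \D_{X_j}$, under which $\coprod_j f_j\mapsto (f_j)_j\in\prod_j\D_{X_j}$; the argument above is just the pointwise verification of this equivalence.
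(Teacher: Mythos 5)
Your proof is correct and rests on the same key idea as the paper's, namely the decomposition $\E_{/\coprod_j X_j}\simeq\prod_j\E_{/X_j}$ via pullback along the inclusions, with $\iota_k^\ast(\coprod_j f_j)\simeq f_k$ by disjointness of coproducts. The only (immaterial) difference is the closing move: you check that the unit $\eta_X(\coprod_j f_j)$ becomes an equivalence after each $\iota_k^\ast$ using the pullback-compatibility of units from \cref{def:refsub}(2), whereas the paper checks right-orthogonality of $\coprod_j f_j$ to all $L$-equivalences componentwise via \cref{lm:lequivandpullbdepsum}(i) — two equivalent criteria for membership in $\D_X$.
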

\begin{proof}
For each $A\in\E$, $\id_A$ is $L$-local since it is the terminal object in $\E_{/A}$. In particular, $\id_0$ is an $L$-local map, where $0$ is the initial object of $\E$. This takes care of closure under empty coproducts. Since colimits in an $\infty$-topos are universal, there is an equivalence
$$
\E_{/\coprod_j X_j}\stackrel{\simeq}{\longrightarrow}\prod_j \E_{/X_j}
$$
given by taking pullbacks along the inclusions $\iota_j\colon X_j\rightarrow \coprod_j X_j$. It follows that, given a map $\alpha$ in $\E_{/\coprod_j X_j}$, $$\alpha\perp_{\coprod_j X_j} \coprod_j f_j\iff (\iota_k)^\ast(\alpha)\perp_{X_k} f_k\quad \text{for all\ } k\in I.$$
(Note that $(\iota_k)^\ast(\coprod_j f_j)=f_k$ because coproducts in $\E$ are disjoint.) The latter condition is true whenever $\alpha\in S_{\left(\coprod_j X_j\right)}$, thanks to \cref{lm:lequivandpullbdepsum} (i).
\end{proof}
\begin{lemma}
\label{lm:pllbckalongepi}
Given any pullback square in $\E$
$$
\bfig
\node e(0,0)[E]
\node m(400,0)[M]
\node x(0,-250)[X]
\node y(400,-250)[Y]

\arrow|a|[e`m;g]
\arrow|l|[e`x;p]
\arrow|r|[m`y;q]
\arrow|b|[x`y;f]

\place(70,-90)[\angle]
\efig
$$ 
where $f$ is an effective epimorphism, $p$ is in $\M^L$ if and only if $q$ is in $\M^L$.
\end{lemma}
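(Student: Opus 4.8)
The plan is to prove the two implications separately. Note first that $E\simeq X\times_Y M$, so $p=f^\ast(q)$ is the base change of $q$ along $f$. The forward implication is then immediate and does not use that $f$ is an effective epimorphism: if $q\in\D_Y$, then $p=f^\ast(q)\in\D_X$ by the very definition of a reflective subfibration (\cref{def:refsub}), whose second clause guarantees that $f^\ast$ restricts to a functor $\D_Y\to\D_X$.

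For the converse, suppose $p=f^\ast(q)$ is $L$-local; I want to deduce $q\in\D_Y$. I will test this through the localization map $\eta_Y(q)\colon q\to L_Y(q)$: since $q$ belongs to the reflective subcategory $\D_Y$ exactly when $\eta_Y(q)$ is an equivalence, it suffices to show that $\eta_Y(q)$ is an equivalence. Pulling back along $f$ and invoking the identity $\eta_X(f^\ast q)=f^\ast(\eta_Y(q))$ established in the proof of \cref{lm:lequivandpullbdepsum}, I find that $f^\ast(\eta_Y(q))$ is the localization map $\eta_X(p)$ of $p$. As $p$ is already $L$-local, $\eta_X(p)$ is an equivalence, and hence so is $f^\ast(\eta_Y(q))$.

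It remains to descend this equivalence along $f$, and this is the only genuinely nontrivial step. The point is that pullback along an effective epimorphism in an $\infty$-topos is conservative: if $f^\ast$ carries a morphism to an equivalence, then that morphism was already an equivalence. This is a consequence of descent (\cite{htt}): writing $\check{C}(f)_\bullet$ for the \v{C}ech nerve of $f$, effectivity yields $Y\simeq\colim\check{C}(f)_\bullet$, hence $\E_{/Y}\simeq\lim_{[n]\in\Delta}\E_{/\check{C}(f)_n}$ by universality of colimits; a morphism of this limit is an equivalence if and only if its image at level $0$ --- which is precisely its image under $f^\ast$ --- is one, because every higher level is a base change of level $0$ along a face map. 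Applying conservativity to $f^\ast(\eta_Y(q))$ shows that $\eta_Y(q)$ is an equivalence, so $q\in\D_Y$, as desired. The main obstacle is thus entirely isolated in this conservativity statement; once it is granted, the rest is a formal manipulation of the reflective-subfibration axioms and the naturality of $\eta$.
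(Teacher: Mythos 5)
Your proposal is correct and follows essentially the same route as the paper: the forward direction is pullback-stability of $\M^L$, and the converse tests $q$ via its unit $\eta_Y(q)$, identifies $f^\ast(\eta_Y(q))$ with the equivalence $\eta_X(p)$, and descends along $f$ using the fact that pullback along an effective epimorphism reflects equivalences. The only cosmetic difference is that the paper simply cites \cite[Lemma 6.2.3.16]{htt} for this last conservativity statement, whereas you sketch its proof via descent and the \v{C}ech nerve.
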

\begin{proof}
By \cite[Lemma 6.2.3.16]{htt}, the statement is true if we replace ``being in $\M^L$" with ``being an equivalence". Suppose $p\in \D_X$ and consider $\eta_Y(q)\colon q\rightarrow L_Y(q)$. Since $f^\ast(\eta_Y(q))=\eta_X(f^\ast (q))=\eta_X(p)$ and $p\in\D_X$, $f^\ast(\eta_Y(q))$ must be an equivalence. By the opening observation, $\eta_Y(q)$ is also an equivalence, so that $q\in\D_Y$.  
\end{proof}

We have thus proved the following result.
\begin{proposition}
\label{prop:llocalmapsarelocal}
The class $\M^L$ of all $L$-local maps is a  local class of maps of $\E$.
\end{proposition}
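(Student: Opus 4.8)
The plan is to verify directly the three conditions in the definition of a local class (\cref{def:locclassofmaps}): closure under small coproducts in $\E^{\bullet\rightarrow\bullet}$, stability under pullbacks, and the effective-epimorphism descent condition. Each of these has essentially been addressed in the preceding discussion, so the proof reduces to collecting them.

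First, pullback-stability is immediate from the very definition of a reflective subfibration (\cref{def:refsub}): for any $f\colon X\rightarrow Y$ the pullback functor $f^\ast$ restricts to $\D_Y\rightarrow\D_X$, so the pullback of an $L$-local map is again $L$-local. Second, closure under arbitrary small coproducts is exactly \cref{lm:localmapsareclosedundercop}. Third, the descent condition---that in a pullback square over an effective epimorphism $f$ the map $p$ is $L$-local if and only if $q$ is---is exactly \cref{lm:pllbckalongepi}. Assembling these three facts shows that $\M^L$ is a local class, which is the assertion.

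If one were building the argument from scratch rather than quoting the lemmas, the only genuinely delicate point would be the descent condition. The key idea there is that effective epimorphisms detect equivalences (the relevant form of \cite[Lemma 6.2.3.16]{htt}), combined with the pullback-compatibility identity $f^\ast(\eta_Y(q))=\eta_X(f^\ast q)$ underlying \cref{lm:lequivandpullbdepsum}: applying $f^\ast$ to the localization map $\eta_Y(q)$ yields the localization map of the pulled-back object $f^\ast q$, and since $f$ is an effective epimorphism, $\eta_Y(q)$ is an equivalence precisely when its pullback is. The other two conditions are formal consequences of pullback-compatibility together with the universality of colimits (used via the equivalence $\E_{/\coprod_j X_j}\simeq\prod_j\E_{/X_j}$), so the substantive content sits in this single equivalence-detection step, while the proposition itself is purely the bookkeeping that gathers the verified conditions.
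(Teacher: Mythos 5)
Your proof is correct and matches the paper exactly: the paper also derives the proposition by combining pullback-stability from \cref{def:refsub} with \cref{lm:localmapsareclosedundercop} and \cref{lm:pllbckalongepi}, introducing the statement with ``We have thus proved the following result.'' Your added remark about the descent step being the only substantive ingredient accurately reflects the content of \cref{lm:pllbckalongepi} as proved in the paper.
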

We can use the above proposition to characterize reflective subfibrations on $\infty$-groupoids as fiberwise localizations. 
\begin{corollary}
\label{cor:reflsubfinspaces}
If $\E=\infty\Gpd$, a map $p\colon E\rightarrow X$ is $L$-local if and only if, for every $x\in X$, the homotopy fiber $\hofib_x(p)$ is an $L$-local $\infty$-groupoid.
\end{corollary}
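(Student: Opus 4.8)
The plan is to exploit the fact, just recorded in \cref{prop:llocalmapsarelocal}, that $\M^L$ is a local class of maps, and to reduce $L$-locality of $p$ to its behavior over the points of $X$. The geometric input special to $\E=\infty\Gpd$ is that effective epimorphisms are exactly the $\pi_0$-surjections, so that every $X$ admits an effective epimorphism out of a small coproduct of points. The two implications then fall out of the ``local-class package'': pullback-stability for one direction, and closure under coproducts together with effective-epimorphism descent for the other.

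For the forward implication I would argue as follows. Suppose $p$ is $L$-local. For each point $x\colon 1\to X$, the homotopy fiber $\hofib_x(p)\to 1$ is the pullback of $p$ along $x$, hence lies in $\D_1=\D$ because $x^\ast$ restricts to a functor $\D_X\to\D$ by \cref{def:refsub}. Thus $\hofib_x(p)$ is an $L$-local $\infty$-groupoid for every $x\in X$. This direction uses nothing beyond pullback-stability of $\M^L$.

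For the converse, suppose every homotopy fiber of $p$ is $L$-local. Choosing one point in each connected component yields a map $e\colon\coprod_{[x]\in\pi_0(X)}1\to X$ which is surjective on $\pi_0$, hence an effective epimorphism. Because colimits in $\E$ are universal (and coproducts are disjoint), the pullback $e^\ast p$ decomposes as $\coprod_{[x]}\hofib_x(p)\to\coprod_{[x]}1$, i.e.\ as the coproduct of the maps $\hofib_x(p)\to 1$. By hypothesis each of these is in $\D$, so \cref{lm:localmapsareclosedundercop} gives $e^\ast p\in\M^L$. Since $e$ is an effective epimorphism, \cref{lm:pllbckalongepi} then forces $p\in\M^L$, that is, $p$ is $L$-local.

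The only step requiring genuine care is the identification of $e^\ast p$ with the coproduct $\coprod_{[x]}\hofib_x(p)$ of the individual homotopy fibers, which rests on universality of colimits together with disjointness of coproducts in $\E$; once this is in place, both implications are immediate from the local-class structure of $\M^L$. I expect this bookkeeping, rather than any conceptual difficulty, to be the main thing to get right.
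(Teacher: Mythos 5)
Your proof is correct and follows essentially the same route as the paper: both reduce to the effective epimorphism from a coproduct of points, identify its pullback with the coproduct of the homotopy fibers via universality of colimits, and then invoke \cref{lm:localmapsareclosedundercop} and \cref{lm:pllbckalongepi}. The only cosmetic differences are that the paper indexes the coproduct over all points of $X$ rather than one per component and runs both implications through the same decomposition, whereas you handle the forward direction directly by pullback-stability.
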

\begin{proof}
If $\E=\infty\Gpd$, the canonical map $s\colon\coprod_{x\in X}1\longrightarrow X$ is an effective epimorphism since it induces a surjection on path components. Since colimits in an $\infty$-topos are universal, we have a pullback square
$$
\bfig
\node hf(-600,0)[]
\node h(0,0)[\coprod_{x\in X}\hofib_x(p)]
\node c(0,-300)[\coprod_{x\in X}1]
\node e(600,0)[E]
\node x(600,-300)[X]

\arrow[h`e;]
\arrow|b|[c`x;s]
\arrow[h`c;s']
\arrow|r|[e`x;p]
\place(70,-90)[\angle]
\efig
$$ 
where $s'$ is the coproduct of the maps $\hofib_x(p)\rightarrow 1$. Thus, $p$ is $L$-local if and only if $s'$ is $L$-local, by \cref{lm:pllbckalongepi}. Since, for $x_0\in X$, the pullback of $s$ along $x_0\colon 1\rightarrow \coprod_{x\in X}1$ is $\hofib_{x_0}(p)\rightarrow 1$, \cref{lm:localmapsareclosedundercop} and stability under pullbacks of $L$-local maps give us that $s'$ is $L$-local if and only if every $\hofib_{x_0}(p)$ is $L$-local.
\end{proof}

\begin{remark}
The above corollary can be generalized to any $\infty$-topos $\E$ upon replacing $\{1\}$ with a set $C$ of $\kappa$-compact generators of $\E$. In this case, by the same argument used in the proof of \cref{cor:reflsubfinspaces}, $p\colon E\rightarrow X$ is $L$-local if and only if, for every map $A \rightarrow X$ with $A\in C$, $A\times_X E\rightarrow A$ is $L$-local. If every object in $C$ is $L$-local and $L$ is a modality, this is the same as each object $A\times_X E$ being $L$-local.
\end{remark}

By \cref{prop:locclassareclass} and \cref{prop:localclassmapsareuniv}, we get the following result.

\begin{theorem}
\label{thm:llocalmapshaveaclassifyingmap}
Let $\kappa$ be a regular cardinal as in \cref{prop:locclassareclass} and let $\M^L_\kappa$ be the class of maps in $\E$ which are $L$-local and relatively $\kappa$-compact. Then $\M^L_\kappa$ has a classifying map
$u_\kappa^L\colon\ti{\U_\kappa^L}\rightarrow \U_\kappa^L$ which is univalent.
\end{theorem}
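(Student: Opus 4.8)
The plan is to deduce the statement directly from the machinery already assembled, since the substantive work has been done in establishing that $L$-local maps form a local class. First I would invoke \cref{prop:llocalmapsarelocal}, which tells us that $\M^L$ is a local class of maps in $\E$. This is the crucial input, and it is where the real content lies: it rests on \cref{lm:localmapsareclosedundercop} (closure under small coproducts) together with \cref{lm:pllbckalongepi} (the descent condition along effective epimorphisms), the latter of which reduces the local-class axiom to the known statement for equivalences via the pullback-compatibility of the reflection maps $\eta$.

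Next I would feed the local class $\M^L$ into \cref{prop:locclassareclass}. This yields arbitrarily large regular cardinals $\kappa$ for which the subclass $\M^L_\kappa$ of relatively $\kappa$-compact $L$-local maps is itself local and admits a classifying map. Denoting this classifying map by $u_\kappa^L\colon\ti{\U_\kappa^L}\rightarrow \U_\kappa^L$, as in the notation introduced earlier, we obtain the existence part of the claim, the cardinal $\kappa$ being exactly one of those supplied by \cref{prop:locclassareclass}.

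Finally, univalence comes for free: by \cref{prop:localclassmapsareuniv} every classifying map is univalent, so in particular $u_\kappa^L$ is univalent. I therefore expect no genuine obstacle at this stage; the theorem is a formal consequence of the three cited results chained together. The only point requiring minor care is bookkeeping, namely ensuring that the cardinal hypothesis in the statement is precisely the one delivered by \cref{prop:locclassareclass}, so that the existence of $u_\kappa^L$ and its univalence refer to the same $\kappa$.
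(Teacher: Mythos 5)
Your proposal matches the paper's argument exactly: the theorem is stated there as an immediate consequence of \cref{prop:llocalmapsarelocal}, \cref{prop:locclassareclass}, and \cref{prop:localclassmapsareuniv}, chained just as you describe. No gaps; the bookkeeping point about $\kappa$ is handled by the theorem's hypothesis referring back to \cref{prop:locclassareclass}.
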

\begin{remark}
\label{rmk:reflsubanduniv}
If $u_\kappa\colon \ti{\U_\kappa}\rightarrow \U_\kappa$ classifies $\kappa$-compact maps in $\E$, then $u_\kappa^L$ is a pullback of $u_\kappa$ along a map $l_\kappa\colon\U_\kappa^L\rightarrow\U_\kappa$.
Since $u_\kappa$ and $u_\kappa^L$ are univalent, by \cite[Cor.~3.10]{univinloccarclosed} $l_\kappa$ is a monomorphism. Hence, $l_\kappa$ is the pullback of $t\colon 1\rightarrow\Omega$ (see \cref{notation:subobjclass}) along a unique map  $\IsLocal_\kappa\colon\U_\kappa\rightarrow~\Omega$, which then determines $u_\kappa^L$ (and all relatively $\kappa$-compact $L$-local maps). Maps $\U_\kappa\rightarrow\Omega$ are used to introduce $L$-local types in homotopy type theory, where $\U^L$ is called a \emph{subuniverse} of the \emph{universe} $\U$ (see \cite[Def.~2.1]{locinhott}). Note that, given a $\kappa$-compact object $X\in~\E$, the associated characteristic map $1\rightarrow \U_\kappa$ factors through $l_\kappa \colon \U_\kappa^L\rightarrowtail \U_\kappa$ (i.e., $X$ is $L$-local) if and only if the pullback of the composite 
$
1\longrightarrow\U_\kappa\xrightarrow{\IsLocal_\kappa} \Omega
$
along $1\rightarrow\Omega$ gives a $(-1)$-truncated object of $\E$ which is equivalent to $1$.
\end{remark}
\section{$L$-connected maps}
\label{sec:lconnmaps}
We study here properties of a class of maps associated with a reflective subfibration $L_\bullet$ on an $\infty$-topos $\E$, the $L$\emph{-connected maps}.\par 
\cref{sec:defandpropoflconnmaps} contains the main definitions, and a few technical properties. In \cref{section:stablefactaremod}, we prove that stable factorization systems on $\E$ correspond to \emph{modalities} $L_\bullet$ on $\E$ (\cref{thm:stablefactsystandmod}). In homotopy type theory, this correspondence is proven in \cite[\S 1]{modinhott}. Although some overlap between our proof and the one in \cite{modinhott} occurs, we did not follow the work there for our arguments. We conclude \cref{section:stablefactaremod} by discussing modalities on $\E$ associated to left exact reflective subcategories of $\E$.

\subsection{Definition and basic properties}
\label{sec:defandpropoflconnmaps}
Given a reflective subfibration $L_\bullet$ on $\E$, we define $L$-connected maps and prove some properties used in \cref{section:stablefactaremod}.
\begin{definition}
\label{def:lconnmaps}
$f\in\E_{/X}$ is said to be an $L$-\emph{connected map (in} $\E$\emph{)} if $L_X(f)\simeq \id_X$. Equivalently, $f$ is $L$-connected if 
$$(f\stackrel{\eta_X(f)}{\longrightarrow}L_X(f))\simeq (f\stackrel{f}{\rightarrow}\id_X)$$
in the arrow category of $\E_{/X}$, where the equivalence is given by $\id_f$ and $L_X(f)\rightarrow~\id_X$. We sometimes refer to this fact by saying that an $L$-connected map $f$ is \emph{its own reflection map}. 
\end{definition}

In particular, an $L$-connected map $f\colon E\rightarrow X$ is an $L_X$-equivalence when seen as a map $f\colon f\rightarrow \id_X$ in $\E_{/X}$.

\begin{remark}
\label{rmk:lconnmapsclosedunderpullbacks}
By taking the reflection of $f\in\E_{/X}$ into $\D_X$ and using stability under pullbacks of reflection maps (see \cref{def:refsub} (2)), it follows that $L$-connected maps are stable under pullbacks along arbitrary maps.
\end{remark}

\begin{lemma}
\label{lm:compoflconnislconn}
Suppose given composable maps $X\xrightarrow{f} Y\xrightarrow{g}Z $ and suppose that $f$ is $L$-connected. Then $g$ is $L$-connected if and only if $gf$ is $L$-connected.
\end{lemma}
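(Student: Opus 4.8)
The plan is to reduce the statement to a fact about $L$-equivalences and then apply $2$-out-of-$3$. The starting point is a convenient reformulation of $L$-connectedness: a map $h\colon A\to B$, regarded as an object of $\E_{/B}$, is $L$-connected if and only if the terminal morphism $t_B(h)\colon h\to\id_B$ of $\E_{/B}$ lies in $S_B$. Indeed, $\id_B$ is the terminal object of $\E_{/B}$ and hence lies in $\D_B$, so $L_B(\id_B)\simeq\id_B$; the canonical map $L_B(h)\to\id_B$ appearing in \cref{def:lconnmaps} is thereby identified with $L_B(t_B(h))$, and $L_B(h)\simeq\id_B$ holds exactly when $L_B(t_B(h))$ is an equivalence, i.e. when $t_B(h)\in S_B$. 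Note that, as a morphism of $\E$, $t_B(h)$ is simply $h$ itself.

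Next I would set up the relevant triangle in $\E_{/Z}$. Since $\Sigma_g\colon\E_{/Y}\to\E_{/Z}$ is post-composition with $g$, we have $\Sigma_g(f)=gf$, $\Sigma_g(\id_Y)=g$, and $\Sigma_g(t_Y(f))$ is precisely the map $\li{f}\colon gf\to g$ induced by $f$. Composing with the terminal morphism $t_Z(g)\colon g\to\id_Z$ recovers the terminal morphism $t_Z(gf)\colon gf\to\id_Z$, because on underlying maps of $\E$ one has $g\circ f=gf$. Thus there is a commuting triangle
\[
gf\xrightarrow{\ \li{f}\ }g\xrightarrow{\ t_Z(g)\ }\id_Z
\]
in $\E_{/Z}$ whose composite is $t_Z(gf)$.

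Now the hypothesis that $f$ is $L$-connected means, via the reformulation above, that $t_Y(f)\in S_Y$. Applying \cref{lm:lequivandpullbdepsum}(ii) with the map $g$ gives $\li{f}=\Sigma_g(t_Y(f))\in S_Z$. Since $S_Z$ is the class of maps inverted by $L_Z$ (equivalently $S_Z=\prescript{\perp}{}{\D_Z}$), it satisfies $2$-out-of-$3$. Applying this to the triangle, with the leg $\li{f}$ already in $S_Z$, shows that $t_Z(g)\in S_Z$ if and only if $t_Z(gf)\in S_Z$. Translating back through the reformulation, this is exactly the assertion that $g$ is $L$-connected if and only if $gf$ is $L$-connected.

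The only delicate points are bookkeeping: correctly identifying $\Sigma_g$ applied to the terminal morphism of $\E_{/Y}$ with the induced map $\li{f}\colon gf\to g$, and checking that its composite with $t_Z(g)$ is the terminal morphism of $gf$. Once these identifications are in place, the conclusion is immediate from $2$-out-of-$3$ for $S_Z$, so I expect no genuine obstacle beyond verifying the reformulation of $L$-connectedness in the first step.
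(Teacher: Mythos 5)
Your proof is correct and follows essentially the same route as the paper: both hinge on observing that $L$-connectedness of $f$ makes $f\colon f\to\id_Y$ an $L_Y$-equivalence, pushing it forward via \cref{lm:lequivandpullbdepsum}(ii) to get $\li{f}=\Sigma_g(f)\in S_Z$, and then comparing the localizations of $g$ and $gf$ over $Z$. The paper phrases the last step by identifying $\eta_Z(g)\circ\Sigma_g(f)$ as the reflection map of $gf$ (so $L_Z(gf)\simeq L_Z(g)$), while you phrase it as two-out-of-three for $S_Z$ applied to the triangle over $\id_Z$ --- the same argument in different packaging.
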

\begin{proof}
Let $\eta_Z(g)\colon g\rightarrow L_Z(g)$ be the reflection map of $g\in\E_{/Z}$ into $\D_Z$. By hypothesis, $f\colon f\rightarrow \id_Y$ is the reflection map of $f$ into $\D_Y$. By \cref{lm:lequivandpullbdepsum} (ii), $\Sigma_g(f)\colon gf\rightarrow g$ is an $L_Z$-equivalence and so the composite map in $\E_{/Z}$ given by $\eta_Z(g)\circ \Sigma_g f\colon gf\rightarrow L_Z(g)$ is the reflection map of $gf$ into $\D_Z$. The claim follows. 
\end{proof}

\begin{lemma}
\label{lm:unitlconnforamodality}
If $L_\bullet$ is a modality on $\E$, then, for every map $f\colon E\rightarrow X$, the reflection map $\eta_X(f)\colon f\rightarrow L_X(f)$ is $L$-connected.
\end{lemma}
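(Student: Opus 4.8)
The plan is to reduce the statement to a single orthogonality computation in $\E_{/X}$ and to let the composing hypothesis enter exactly once. First I would record a convenient reformulation of $L$-connectedness. For a map $g\colon A\rightarrow B$, the terminal object $\id_B$ of $\E_{/B}$ is $L$-local, so the structure morphism $g\colon g\rightarrow\id_B$ in $\E_{/B}$ is an $L_B$-equivalence exactly when $L_B(g)\rightarrow L_B(\id_B)\simeq\id_B$ is an equivalence, i.e. exactly when $L_B(g)\simeq\id_B$. Thus $g$ is $L$-connected if and only if its structure morphism lies in $S_B$. Writing $p:=L_X(f)\colon\li E\rightarrow X$ for the reflection of $f$, so that $\eta:=\eta_X(f)\colon E\rightarrow\li E$ is a map over $X$ with $p\eta\simeq f$, the goal becomes: the structure morphism $\eta\rightarrow\id_{\li E}$ in $\E_{/\li E}$ lies in $S_{\li E}$. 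By the very definition $S_{\li E}=\prescript{\perp}{}{\D_{\li E}}$, this amounts to showing that for every $r\in\D_{\li E}$ precomposition with $\eta$ induces an equivalence $\E_{/\li E}(\id_{\li E},r)\xrightarrow{\simeq}\E_{/\li E}(\eta,r)$.

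Next I would bring in the modality hypothesis. Fix $r\colon R\rightarrow\li E$ in $\D_{\li E}$. Since $p\in\D_X$ and $L_\bullet$ is composing (\cref{def:refsub}), the composite $pr\colon R\rightarrow X$ lies in $\D_X$; this is the only place the modality assumption is used, and it is what allows $pr$ to be treated as an object of $\D_X$. View $r$ as a morphism $pr\rightarrow p$ in $\E_{/X}$, and recall that the reflection map is $\eta_X(f)\colon f\rightarrow p$, which lies in $S_X$. Because $\eta_X(f)\in S_X=\prescript{\perp}{}{\D_X}$, precomposition with it gives equivalences $\E_{/X}(p,t)\xrightarrow{\simeq}\E_{/X}(f,t)$ for every $t\in\D_X$; taking $t=pr$ and $t=p$ shows that both horizontal maps in the commuting square
$$
\bfig
\node a(0,0)[\E_{/X}(p,pr)]
\node b(1300,0)[\E_{/X}(f,pr)]
\node c(0,-320)[\E_{/X}(p,p)]
\node d(1300,-320)[\E_{/X}(f,p)]
\arrow|a|[a`b;\eta_X(f)^\ast]
\arrow|l|[a`c;r_\ast]
\arrow|r|[b`d;r_\ast]
\arrow|b|[c`d;\eta_X(f)^\ast]
\efig
$$
are equivalences, so the square is a pullback (equivalently, $\eta_X(f)\perp_X r$).

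Finally I would extract the claim by taking fibres of this pullback square of $\infty$-groupoids. Over $\id_{\li E}\in\E_{/X}(p,p)$, whose image along the bottom map is $\eta_X(f)\in\E_{/X}(f,p)$, the fibre of the left vertical map is the space of $s\colon\li E\rightarrow R$ with $rs\simeq\id_{\li E}$, namely $\E_{/\li E}(\id_{\li E},r)$, while the fibre of the right vertical map is the space of $v\colon E\rightarrow R$ with $rv\simeq\eta$, namely $\E_{/\li E}(\eta,r)$; the induced map on fibres sends $s$ to $s\eta$, i.e. it is precomposition with $\eta$. As the square is a pullback, this map on fibres is an equivalence, which is exactly what was required. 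Since $r\in\D_{\li E}$ was arbitrary, the structure morphism $\eta\rightarrow\id_{\li E}$ lies in $S_{\li E}$, and hence $\eta$ is $L$-connected. I expect the main obstacle to be bookkeeping rather than conceptual: one must correctly identify the two fibres computed over $X$ with the slice mapping spaces over $\li E$, and one must be careful that the composing-closure of $L_\bullet$ is genuinely invoked, since without it the composite $pr$ need not be $L$-local over $X$ and the square above would not even express a statement about $S_X$.
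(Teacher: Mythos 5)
Your proof is correct, but it takes a genuinely different route from the paper's. The paper (working with $X=1$) forms the reflection $n=\eta_{LE}(\eta(E))\colon E\rightarrow L_{LE}(E)$ of $\eta(E)$ into $\D_{LE}$, observes via \cref{lm:lequivandpullbdepsum}~(ii) that $n$ is an $L_1$-equivalence, and uses the composing hypothesis to see that its target $L_{LE}(E)$ is an $L$-local object; uniqueness of reflection maps then forces $n\simeq\eta(E)$, i.e.\ $L_{LE}(\eta(E))$ is an equivalence, which is the claim. You instead unwind $L$-connectedness of $\eta$ into the orthogonality statement $\eta\perp_{\li{E}}r$ for every $r\in\D_{\li{E}}$ and verify it by hand through a fiber-sequence computation on mapping spaces over $X$, with the composing hypothesis entering only to place $pr$ in $\D_X$; this is essentially the technique of the paper's proof of \cref{prop:weakcompproplocmap}, run for left rather than right orthogonality, and your identification of the fibers of postcomposition with $r$ with slice mapping spaces over $\li{E}$ is the same appeal to the dual of Lurie's Lemma~5.5.5.12 that the paper makes there. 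Both arguments invoke the modality hypothesis at the equivalent point (closure of $L$-local maps under composition). The paper's version is shorter because it outsources all bookkeeping to \cref{lm:lequivandpullbdepsum} and the universal property of reflections; yours is more self-contained, works over a general base $X$ rather than reducing to $X=1$, and makes visible exactly which orthogonality is being certified, at the cost of the explicit fiber identifications.
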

\begin{proof}
We prove this result for $X=1$, the general case having the same proof. Let $\eta(E)\colon E\rightarrow LE$ be the reflection map of $E$ and let $n\colon \eta(E)\rightarrow L_{LE}(\eta(E))$ be the reflection of $\eta(E)$ into $\D_{LE}$ (that is, $n=\eta_{LE}(\eta(E))$):
$$
\bfig
\node e(0,0)[E]
\node f(600,0)[LE]
\node lfe(900,300)[L_{LE}(E)]

\arrow|a|/{@{->}@/^20pt/}/[e`lfe;n]
\arrow|m|[e`f;\eta(E)]
\arrow|r|[lfe`f;L_{LE}(\eta(E))]
\efig
$$
By \cref{lm:lequivandpullbdepsum} and since $L$ is a modality, $n$ is an $L_1$-equivalence into the $L$-local object $L_{LE}(E)$, and it is therefore equivalent to $\eta(E)$ via the map $L_{LE}(\eta(E))$. Hence, $\eta (E)$ is $L$-connected.
\end{proof}

\begin{lemma}
\label{lm:orthpropoflconnandllocal}
Let $L_\bullet$ be a reflective subfibration on $\E$. Then the following hold.
\begin{itemize}
\item[(i)] Suppose $p\colon E\rightarrow X$ is a map in $\E$ with the property that $f\perp p$ for every $L$-connected map $f$. Then $p$ is an $L$-local map.
\item[(ii)] Suppose that $L_\bullet$ is a modality and let $f\colon A\rightarrow B$ be a map in $\E$ with the property that $f\perp p$ for every $L$-local map $p$. Then $f$ is an $L$-connected map.
\end{itemize}
\end{lemma}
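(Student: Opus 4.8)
The plan rests on one bridging observation that converts the global relation $\perp$ in $\E$ into the fibrewise relations $\perp_B$ that a reflective subfibration controls. Namely, for $f\colon A\rightarrow B$ and $p\colon E\rightarrow X$, I claim that $f\perp p$ holds in $\E$ if and only if $(f\rightarrow\id_B)\perp_B v^\ast p$ in $\E_{/B}$ for every $v\colon B\rightarrow X$, where $(f\rightarrow\id_B)$ denotes $f$ regarded as a morphism onto the terminal object of $\E_{/B}$. To see this, take the fibre over each $v\in\E(B,X)$ of the defining pullback square of $f\perp p$: the fibre is the precomposition map $\Map_{/B}(\id_B,v^\ast p)\rightarrow\Map_{/B}(f,v^\ast p)$, where the target is identified with the space of sections of $f^\ast v^\ast p$ over $A$ via $\Sigma_f\dashv f^\ast$ together with $\Sigma_f(\id_A)=f$. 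One consequence is used freely below: if $g$ is $L$-connected then $(g\rightarrow\id_B)\in S_B$, and since $v^\ast p\in\D_B$ whenever $p$ is $L$-local (\cref{def:refsub}) and $S_B={}^\perp\D_B$, the bridging observation gives $g\perp p$. This is the orthogonality of connected maps against local maps.

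For part (ii), reflect $f$ into $\D_B$: write $r:=L_B(f)$ and let $c:=\eta_B(f)$, with underlying map $c\colon A\rightarrow R$ satisfying $rc=f$. Since $L_\bullet$ is a modality, $c$ is $L$-connected by \cref{lm:unitlconnforamodality}, while $r\in\D_B$ is $L$-local. Applying the hypothesis to the $L$-local map $r$ yields $f\perp r$, so the commuting square with sides $c,f,r,\id_B$ has a unique diagonal $s\colon B\rightarrow R$ with $sf=c$ and $rs=\id_B$. Now $c\perp r$ by the first paragraph (connected against local), so the square with top and left edge both $c$ and right and bottom edge both $r$ has a contractible space of fillers $R\rightarrow R$; but $\id_R$ and $sr$ are both fillers (the latter using $sf=c$ and $rs=\id_B$), hence $sr=\id_R$. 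Together with $rs=\id_B$ this makes $r$ an equivalence, i.e. $L_B(f)\simeq\id_B$, so $f$ is $L$-connected.

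For part (i), by \cref{prop:propofreflsub}(i) it suffices to show $\alpha\perp_X p$ for all $\alpha\in S_X$; in fact it is enough to establish $\eta_X(p)\perp_X p$, for then the lift of the square with identity top edge retracts $\eta_X(p)$, forcing it to be an equivalence and hence $p\in\D_X$, exactly as in the proof of \cref{prop:propofreflsub}. Via the bridging observation, the hypothesis that $p$ is right orthogonal to every $L$-connected map translates, slice by slice, into right orthogonality of $p$ against every morphism onto a terminal object whose underlying map is $L$-connected. The class of maps left orthogonal to $p$ is saturated and satisfies two-out-of-three, and every $L_X$-equivalence fits into a naturality square built from reflection units and an equivalence (an $L_X$-equivalence between local targets); so the problem reduces to showing that the reflection units lie in the saturated class generated by the $L$-connected maps.

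This last reduction is the main obstacle: the orthogonality of connected against local (used above) is immediate, but the converse — that right orthogonality to the connected maps \emph{alone} forces locality — amounts to knowing that the $L$-connected maps detect all $L_X$-equivalences, and unlike the modality case (\cref{lm:unitlconnforamodality}) the reflection unit need not itself be $L$-connected. I expect to handle this by presenting $L_X$ as localization at a set of maps (\cref{prop:existofsloc}) and analysing the unit as a transfinite composite of pushouts, the pullback-compatibility of $L_\bullet$ (\cref{def:refsub}) being what permits the required fibrewise bookkeeping.
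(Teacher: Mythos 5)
Your part (ii) is correct and is in substance the paper's own argument: the paper also lifts $f$ against the $L$-local map $L_B(f)$ to obtain $s\colon B\rightarrow L_B(A)$ with $sf=\eta_B(f)$ and $L_B(f)s=\id_B$, and then deduces $sL_B(f)=\id$ from the universal property of the unit (you get the same identity from contractibility of the space of fillers for $\eta_B(f)\perp L_B(f)$; same point). Your bridging observation converting $f\perp p$ into slicewise orthogonality against $v^\ast p$ is also exactly the mechanism the paper uses, as a fiber-sequence computation, in the proof of \cref{thm:stablefactsystandmod}.

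The genuine gap is part (i), which your proposal does not prove. The paper's proof is a short retract argument: $\eta_X(p)\colon E\rightarrow L_X(E)$ is $L$-connected, so the hypothesis applied to the commuting square with top edge $\id_E$, left edge $\eta_X(p)$, right edge $p$ and bottom edge $L_X(p)$ yields $n\colon L_X(E)\rightarrow E$ with $n\eta_X(p)=\id_E$ and $pn=L_X(p)$; this exhibits $p$ as a retract in $\E_{/X}$ of the $L$-local map $L_X(p)$, and reflective subcategories are closed under retracts, so $p\in\D_X$. You are right that the first step --- $L$-connectedness of the unit --- is \cref{lm:unitlconnforamodality} and hence needs $L_\bullet$ to be a modality; the paper in fact cites precisely that lemma at this point, so its proof of (i) is really a proof under the modality hypothesis, which is the only setting in which the lemma is applied (namely \cref{thm:stablefactsystandmod}(2)). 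The correct repair is to run the retract argument in that setting, not to pursue your transfinite-composite program: that program presupposes presenting $L_X$ as localization at a set of maps, which is not available for an arbitrary reflective subfibration, and even for $L^S_\bullet$ the generating maps $f_i\times Z$ are not themselves $L$-connected, so the cell-by-cell bookkeeping does not stay inside the saturated class generated by the $L$-connected maps. As written, your proposal establishes (ii) but leaves (i) open.
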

\begin{proof}
We start by proving (i). Consider the reflection map of $p$ into $\D_X$ given by
$$
\bfig
\node e(0,0)[E]
\node lxe(500,0)[L_X (E)]
\node x(250,-250)[X]

\arrow|l|[e`x;p]
\arrow|a|[e`lxe;\eta_X (p)]
\arrow|r|[lxe`x;L_X(p)]
\efig
$$
Then, by \cref{lm:unitlconnforamodality}, $\eta_X(p)$ is $L$-connected. Therefore, by the hypothesis on $p$, there is a unique $n\colon L_X(E)\rightarrow E$ with $n\eta_X (p)=\id_E$ and $pn=L_X(p)$. In particular, $p$ is a retract of the $L$-local map $L_X(p)$ and it is therefore an $L$-local map itself.\\
As for (ii), consider the reflection map of $f$ into $\D_B$ given by
$$
\bfig
\node a(0,0)[A]
\node lba(500,0)[L_B (A)]
\node b(250,-250)[B]

\arrow|l|[a`b;f]
\arrow|a|[a`lba;\eta_B(f)]
\arrow|r|[lba`b;L_B(f)]
\efig
$$
The hypothesis on $f$ implies that there is a unique $s\colon B\rightarrow L_B(A)$ with $sf=\eta_B(f)$ and $L_B(f)s=\id_B$. In particular, $sL_B(f)$ can be seen as a map  $L_B(f)\rightarrow L_B(f)$ in $\E_{/B}$. Precomposing this map with $\eta_B(f)$, we deduce that $sL_B(f)=\id$. Hence, $s$ is an equivalence and $f$ is $L$-connected, by \cref{lm:unitlconnforamodality}.
\end{proof}

\subsection{Stable factorization systems are modalities}\label{section:stablefactaremod}
In \cite{agenBM} the term ``modality" is used as a synonym for a stable factorization system on an $\infty$-topos $\E$. We reconcile here that terminology with the definition of modality given in \cref{def:refsub}. Namely, we show that there is a bijective correspondence between stable factorization system on $\E$ and modalities on $\E$.\par

Recall that a factorization system $\ff=(\Ll,\Rr)$ on $\E$ is \emph{stable} if the left class $\Ll$ is stable under pullbacks. (The right class $\Rr$ is always stable under pullbacks.)  
\begin{example}
\label{ex:ntruncfactsyst}
For every $n\geq -2$, the $n$-truncated maps in an $\infty$-topos $\E$ form the right class of a stable factorization system, whose left class is given by the $n$-connected maps (see \cite[Prop.~3.3.6]{agenBM} and \cite[\S 6.5.1]{htt}).
\end{example}
Given a class $\M$ of maps in $\E$ and an object $X\in\E$, we let $\M_X$ be the class of maps in $\E_{/X}$ that are mapped into $\M$ by the forgetful functor $\E_{/X}\rightarrow \E$. 
\begin{lemma}[{\cite[Lemma 3.1.7]{agenBM}}]
\label{lemma:factsystonslices}
Let $\ff=(\Ll,\Rr)$ be a factorization system on $\E$. Then, for every $X\in\E$, $\ff_X:=(\Ll_X,\Rr_X)$ is a factorization system on $\E_{/X}$.
\end{lemma}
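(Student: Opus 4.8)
The plan is to derive each defining property of a factorization system for $\ff_X$ on $\E_{/X}$ from the corresponding property of $\ff$ on $\E$, using the forgetful functor $U\colon\E_{/X}\to\E$ through which $\Ll_X$ and $\Rr_X$ are defined (so $\Ll_X=U^{-1}(\Ll)$ and $\Rr_X=U^{-1}(\Rr)$). Two of the three conditions are immediate. Stability under retracts holds because $U$ is a functor: a retract of $\alpha$ in $\E_{/X}$ is sent to a retract of $U\alpha$ in $\E$, so if $U\alpha\in\Ll$ (resp.\ $\Rr$) then the underlying map of the retract lies in $\Ll$ (resp.\ $\Rr$) by retract-stability in $\E$, whence the retract lies in $\Ll_X$ (resp.\ $\Rr_X$). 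Existence of factorizations is equally direct: given $\alpha\colon p\to q$ in $\E_{/X}$ with underlying map $U\alpha\colon A\to B$, choose a factorization $U\alpha\simeq r\circ l$ in $\E$ with $l\in\Ll$ and $r\in\Rr$, and promote the middle object $M$ to an object of $\E_{/X}$ by the structure map $m:=q\circ r\colon M\to X$. Then $r$ is a map $(M,m)\to(B,q)$ over $X$ tautologically, and $l$ is a map $(A,p)\to(M,m)$ over $X$ because $m\circ l=q\circ r\circ l\simeq q\circ U\alpha\simeq p$, the final homotopy being the structure datum of $\alpha$. This exhibits $\alpha\simeq r\circ l$ with $l\in\Ll_X$ and $r\in\Rr_X$.

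The substantive point is the orthogonality $\Ll_X\perp_X\Rr_X$, which I would obtain from the following key lemma: for maps $\alpha,\beta$ of $\E_{/X}$, if $U\alpha\perp U\beta$ in $\E$ then $\alpha\perp_X\beta$ in $\E_{/X}$. (The converse holds by the same analysis but is not needed here.) Granting this, $\Ll_X\perp_X\Rr_X$ follows at once from $\Ll\perp\Rr$. To prove the lemma I would reformulate both orthogonality relations as pullback conditions on squares of mapping spaces. Writing $p\colon A\to X$, $q\colon B\to X$ for the (co)domains of $\alpha$ and $r\colon M\to X$, $s\colon N\to X$ for those of $\beta$, the relation $\alpha\perp_X\beta$ asserts that the square with corners $\Map_{\E_{/X}}(q,r),\ \Map_{\E_{/X}}(p,r),\ \Map_{\E_{/X}}(q,s),\ \Map_{\E_{/X}}(p,s)$ is a pullback, while $U\alpha\perp U\beta$ asserts the same for the square with the four $\E$-mapping spaces $\Map_\E(-,M)$ and $\Map_\E(-,N)$. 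These two squares are linked by the standard fiber sequence
\[
\Map_{\E_{/X}}(u,v)\longrightarrow\Map_\E(\dom u,\dom v)\xrightarrow{\,v\circ(-)\,}\Map_\E(\dom u,X),
\]
which realizes each slice mapping space as the fiber, over the structure map $u$, of postcomposition with $v$.

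Assembling these fiber sequences identifies the slice orthogonality square with the square of vertical fibers of a map of squares $\Phi\to\Psi$, where $\Phi$ is the $\E$-orthogonality square for $U\alpha\perp U\beta$, the transformation $\Phi\to\Psi$ is postcomposition with the structure maps $r$ and $s$, and the fibers are taken over the basepoints $p$ and $q$ supplied by the structure maps of $\alpha$ and $\beta$. Crucially, $\Psi$ has both rows equal to $(U\alpha)^\ast\colon\Map_\E(B,X)\to\Map_\E(A,X)$ and both vertical maps the identity: the verticals are identities because $s\circ U\beta\simeq r$, the structure homotopy of $\beta$, so that $s\circ(-)$ after postcomposing with $U\beta$ agrees with $r\circ(-)$. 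Having invertible vertical maps, $\Psi$ is a pullback square; and since homotopy fibers commute with homotopy pullbacks, the hypothesis that $\Phi$ is a pullback (namely $U\alpha\perp U\beta$) forces the fiber square to be a pullback as well (namely $\alpha\perp_X\beta$).

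The main obstacle is exactly this last transfer. In a $1$-category the lemma is transparent: any filler $d\colon B\to M$ in $\E$ of a lifting square coming from $\E_{/X}$ is automatically a morphism over $X$ (because $r\circ d=s\circ(U\beta\circ d)=q$, using $r\simeq s\circ U\beta$ and that the lower horizontal side lies over $X$), so lifts in $\E_{/X}$ and in $\E$ coincide. In the $\infty$-categorical setting the existence of such a filler is not enough: one must show that the \emph{whole space} of lifts in $\E_{/X}$ is contractible, and it is precisely here that one needs the fiber-sequence bookkeeping together with the observation that $\Psi$ is a pullback. The latter is essential rather than cosmetic: the fibers of a pullback square, taken over a base square that is \emph{not} a pullback, need not form a pullback. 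Finally, retract-stability, orthogonality, and the existence of factorizations together imply $\Ll_X={}^{\perp}\Rr_X$ and $\Rr_X=\Ll_X^{\perp}$ in $\E_{/X}$ by the usual argument, so $\ff_X$ is a factorization system in whichever of the standard senses one adopts.
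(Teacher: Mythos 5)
The paper offers no proof of this lemma: it is imported verbatim from \cite[Lemma 3.1.7]{agenBM}, so there is no internal argument to compare yours against, and your proposal must be judged on its own merits. It is correct, and it is essentially the argument one finds in the cited source. The two easy clauses (retract-closure, and existence of factorizations by equipping the intermediate object $M$ of a factorization of $U\alpha$ with the structure map $q\circ r$) are fine. The substantive clause is the transfer of orthogonality, and your reduction is the right one: realize each slice mapping space $\Map_{\E_{/X}}(u,v)$ as the fiber of $v\circ(-)\colon\Map_\E(\dom u,\dom v)\to\Map_\E(\dom u,X)$ over the structure map of $u$, so that the $\perp_X$-square becomes the square of fibers of a map from the $\perp$-square $\Phi$ to the square $\Psi$ built from $\Map_\E(-,X)$. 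Your observation that $\Psi$ has identity verticals (because $s\circ U\beta\simeq r$) and is therefore automatically a pullback is exactly what makes the fiber square inherit the pullback property from $\Phi$: since limits commute with limits, the initial-corner-to-limit comparison map for the fiber square is the induced map on fibers of the two equivalences $\Phi_{00}\to\lim(\mathrm{cospan}\,\Phi)$ and $\Psi_{00}\to\lim(\mathrm{cospan}\,\Psi)$, hence itself an equivalence. Your closing caveat, that the hypothesis on $\Psi$ cannot be dropped, is also accurate. The only place a fully formal quasi-categorical treatment would add anything is the factorization clause: strictly one should produce a $2$-simplex of $\E_{/X}$ lying over the chosen factorization $2$-simplex of $U\alpha$, which is most cleanly done by noting that $\E_{/X}\to\E$ is a right fibration; your explicit construction of the homotopy $m\circ l\simeq p$ is the fiberwise shadow of that lift and is the standard level of detail for such arguments.
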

We are now ready to prove the following result.
\begin{theorem}
\label{thm:stablefactsystandmod}
Let $\E$ be an $\infty$-topos.
\begin{enumerate}
\item Let $\ff=(\Ll,\Rr)$ be a stable factorization system on $\E$. There exists a modality $L^\ff_\bullet$ on $\E$ whose local maps are exactly the maps in $\Rr$.
\item Let $L_\bullet$ be a modality on $\E$. Let $\Ll$ be the class of $L$-connected maps and $\Rr$ the class of $L$-local maps. Then $\ff_L=(\Ll,\Rr)$ is a stable factorization system on $\E$.
\end{enumerate}
\end{theorem}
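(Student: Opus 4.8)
The plan is to prove the two directions separately, constructing the modality from the factorization system in (1) and the factorization system from the modality in (2).

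For part (1), I would define the candidate $L^{\ff}_\bullet$ by letting $\D_X$ be the full subcategory of $\E_{/X}$ spanned by those $p\colon E\to X$ whose underlying map lies in $\Rr$. By \cref{lemma:factsystonslices}, $(\Ll_X,\Rr_X)$ is a factorization system on $\E_{/X}$, and I would use it to manufacture the reflection: given $p\in\E_{/X}$, factor the underlying map as $E\xrightarrow{l}F\xrightarrow{r}X$ with $l\in\Ll$, $r\in\Rr$, set $L_X(p):=r$, and take the unit $\eta_X(p)$ to be $l$ viewed as a morphism $p\to r$ in $\E_{/X}$. That this exhibits $\D_X$ as reflective is exactly the orthogonality $l\perp_X q$ for $q\in\Rr_X$, which holds since $(\Ll_X,\Rr_X)$ is a factorization system; essential uniqueness of factorizations makes $L_X$ a functor. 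The assignment is pullback-stable on subcategories because $\Rr$ is stable under pullback, and $L^{\ff}_\bullet$ is \emph{composing} because $\Rr$ is closed under composition, so it will be a modality once the reflective-subfibration condition is checked.

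The single step that genuinely uses the word \emph{stable}, and the crux of (1), is the reflective-subfibration identity $L_X(f^\ast p)\simeq f^\ast(L_Y p)$ of \cref{def:refsub}. Here I would factor $p$ in $\E_{/Y}$ as $r\circ l$ and pull the entire factorization back along $f\colon X\to Y$. Because $\Ll$ is stable under pullback (this is precisely the stability of $\ff$), $f^\ast l$ is again in $\Ll$, while $f^\ast r$ is automatically in $\Rr$; hence $f^\ast r\circ f^\ast l$ is an $(\Ll,\Rr)$-factorization of $f^\ast p$ and therefore computes $L_X(f^\ast p)$, identifying it with $f^\ast(L_Y p)=f^\ast r$. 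This is exactly where stability is indispensable, and it is the place I expect the most care to be needed.

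For part (2), I would verify the three ingredients of a stable factorization system with $\Ll$ the $L$-connected maps and $\Rr$ the $L$-local maps. \textbf{Factorization:} given $f\colon A\to B$, factor it over $B$ as $A\xrightarrow{\eta_B(f)}L_B(A)\xrightarrow{L_B(f)}B$; here $L_B(f)\in\D_B$ is $L$-local by construction, while $\eta_B(f)$ is $L$-connected by \cref{lm:unitlconnforamodality} (this is where the modality hypothesis is used). \textbf{Orthogonality $\Ll\perp\Rr$:} I would check this fiberwise. For $L$-local $p\colon E\to X$, the relation $f\perp p$ is equivalent to $f\perp_B b^\ast p$ for every $b\colon B\to X$, since a square over $b$ is the same as a morphism $f\to b^\ast p$ in $\E_{/B}$ and lifts correspond to sections; now $b^\ast p\in\D_B$ by pullback-stability of $L$-local maps, while $f$, being $L$-connected, is an $L_B$-equivalence, i.e.\ $f\in S_B=\prescript{\perp}{}{\D_B}$ by \cref{def:lconnmaps} and \cref{not:stuffaboutreflsub}, whence $f\perp_B b^\ast p$. \textbf{Saturation and stability:} \cref{lm:orthpropoflconnandllocal} promotes the inclusions $\Ll\subseteq\prescript{\perp}{}{\Rr}$ and $\Rr\subseteq\Ll^{\perp}$ to equalities (part (ii) shows any map left orthogonal to all $L$-local maps is $L$-connected, part (i) gives the dual), and $\Ll$ is stable under pullback by \cref{rmk:lconnmapsclosedunderpullbacks}; together with the essential uniqueness of factorizations furnished by orthogonality, this makes $(\Ll,\Rr)$ a stable factorization system. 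The main obstacle here is making the fiberwise reduction of orthogonality precise; once granted, the rest follows cleanly from $S_B=\prescript{\perp}{}{\D_B}$ and the lemmas already established.
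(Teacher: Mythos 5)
Your proposal is correct and follows essentially the same route as the paper: part (1) is built from the sliced factorization systems $(\Ll_X,\Rr_X)$, with stability of $\Ll$ under pullback supplying the subfibration condition $L_X(f^\ast p)\simeq f^\ast(L_Yp)$, and part (2) uses \cref{lm:unitlconnforamodality} for the factorizations, \cref{lm:orthpropoflconnandllocal} for saturation of the two classes, and a fiberwise verification of $\Ll\perp\Rr$. The only cosmetic difference is in that last step: you pull $p$ back along $b\colon B\to X$ and invoke $f\in S_B=\prescript{\perp}{}{\D_B}$ against $b^\ast p\in\D_B$, whereas the paper pushes $f$ forward along $k$ via $\Sigma_k$ and invokes \cref{lm:lequivandpullbdepsum}~(ii) against $p\in\D_Z$ --- adjoint formulations of the same fiber computation.
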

\begin{proof}
We prove the two statements separately and we start from the first claim.\par 
Let $\ff=(\Ll,\Rr)$ be any factorization system on $\E$. Set $\D:=\Rr_{/1}$, the full subcategory of $\E$ spanned by those $X\in\E$ such that the map $X\rightarrow 1$ is in $\Rr$. It follows from uniqueness and functoriality of the $(\Ll,\Rr)$-factorizations (see \cite[\S~3.1]{agenBM} and \cite[Prop.~5.2.8.17]{htt}) that $\D$ is a reflective subcategory of $\E$. For $X\in\E$, the value $L(X)\in\D$ of the reflector and the unit map $\eta(X)\colon X\rightarrow L(X)$ are determined by the fact that $(\eta(X),\ L(X)\rightarrow 1)$ is the $(\Ll,\Rr)$-factorization of the map $X\rightarrow 1$. In particular, $\eta (X)$ is a map in $\Ll$, which gives the universal property for the unit map. We can apply the same considerations to the factorization system $(\Ll_X,\Rr_X)$ on $\E_{/X}$, and obtain a reflective subcategory $\D_{X}:=(\Rr_X)_{/\id_{X}}$ of $\E_{/X}$, for every $X\in\E$. Note that $p\in\E_{/X}$ is in $\D_X$ if and only if it is in $\Rr$ when considered as a map in $\E$. Since the class $\Rr$ is closed under pullbacks along any map and under compositions with maps in $\Rr$ (see \cite[Lemma 3.1.6 (3)]{agenBM}), it follows that the assignment $X\mapsto \D_{X}$ so defined gives rise to a composing srs $L_\bullet^{\ff}$ on $\E$ (see \cref{def:refsub}). It is straightforward to see that $L_\bullet^{\ff}$ is a reflective subfibration when $\ff=(\Ll,\Rr)$ is a stable factorization system.\par
For the second claim, let $L_\bullet$ be a modality on $\E$ and let $\ff_L=(\Ll,\Rr)$ be as in the statement of the theorem. For any $f\colon E\rightarrow X$ in $\E$, the reflection of $f$ into $\D_X$ is an $\ff_L$-factorization of $f$, by \cref{lm:unitlconnforamodality}. Both $\Ll$ and $\Rr$ contain all equivalences and are closed under composition, by \cref{lm:compoflconnislconn} and because $L_\bullet$ is a modality. Furthermore, by \cref{rmk:lconnmapsclosedunderpullbacks}, the left class is closed under pullbacks, while \cref{lm:orthpropoflconnandllocal} says that $\prescript{}{}{\Ll}^{\perp}\subseteq \Rr$ and $\prescript{\perp}{}{\Rr}\subseteq \Ll$.\par 
Thus, to conclude that $\ff_L$ is a factorization system, we just need to show that, for every $L$-connected map $f\colon X\rightarrow ~Y$ and for every $L$-local map $p\colon E\rightarrow Z$, we have that $f\perp p$, that is the following commutative diagram in $\infty\Gpd$
$$
\bfig
\node eye(0,0)[\E(Y,E)]
\node eyz(700,0)[\E(Y,Z)]
\node exe(0,-300)[\E(X,E)]
\node exz(700,-300)[\E(X,Z)]

\arrow|a|[eye`eyz;\E(Y,p)]
\arrow|l|[eye`exe;\E(f,E)]
\arrow|r|[eyz`exz;\E(f,Z)]
\arrow|b|[exe`exz;\E(X,p)]
\efig
$$ 
is a pullback square. Equivalently, we can check that the induced map on fibers is an equivalence. By looking at the fiber over $k\in\E(Y,Z)$, such an induced map is given by 
$
\E_{/Z}(\li{f},p)\colon\E_{/Z}(k,p)\rightarrow\E_{/Z}(kf,p),
$
where $\li{f}$ is given by considering $f$ as a map $kf\rightarrow k$ in $\E_{/Z}$. This map fits into the following commutative square in $\infty\Gpd$
$$
\bfig
\node elzkp(0,0)[\E_{/Z}(L_Z(k),p)]
\node ekp(1100,0)[\E_{/Z}(k,p)]
\node elzkfp(0,-300)[\E_{/Z}(L_Z(kf),p)]
\node ekfp(1100,-300)[\E_{/Z}(kf,p)]

\arrow|a|[elzkp`ekp;\E_{/Z}(\eta_Z(k),p)]
\arrow|l|[elzkp`elzkfp;\E_{/Z}(L_Z(\li{f}),p)]
\arrow|r|[ekp`ekfp;\E_{/Z}(\li{f},p)]
\arrow|b|[elzkfp`ekfp;\E_{/Z}(\eta_Z(kf),p)]
\efig
$$
(Here, $\eta_Z(k)\colon k\rightarrow L_Z(k)$ is the reflection of $k$ into $\D_Z$ and similarly for $\eta_Z(kf)$.) Note that the horizontal maps are equivalences as $p$ is $L$-local. Since $f$ is $L$-connected, $f\colon f\rightarrow \id_Y$ is an $L_Y$-equivalence and so $\li{f}=\Sigma_k (f)$ is an $L_Z$-equivalence, by \cref{lm:lequivandpullbdepsum} (ii). Therefore, $L_Z(\li{f})$ is an equivalence and then $\E_{/Z}(\li{f},p)$ is also an equivalence. Hence, $f\perp g$ and $\ff_L$ is a stable factorization system. 
\end{proof}

\begin{corollary}
\label{cor:stabtomodandmodtostabbij}
The assignments $\ff\mapsto L^\ff_\bullet$ and $L_\bullet\mapsto \ff_L$ determine a bijective correspondence between the class of stable factorization systems on an $\infty$-topos $\E$ and the class of collections $\{\D_X\}_{X\in\E}$ of reflective subcategories $\D_X\subseteq\E_{/X}$ which form the $L$-local maps for a modality $L_\bullet$ on $\E$.
\end{corollary}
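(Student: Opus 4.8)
The plan is to recognize that both assignments are already well-defined by \cref{thm:stablefactsystandmod}: part (1) sends a stable factorization system $\ff=(\Ll,\Rr)$ to a modality $L^\ff_\bullet$, and part (2) sends a modality $L_\bullet$—equivalently, its collection $\{\D_X\}_{X\in\E}$ of local maps, which determines each reflector $L_X$ up to equivalence—to a stable factorization system $\ff_L$. It then remains only to check that the two composites are identities, and I would treat each direction separately.

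First I would verify that $\ff_{L^\ff}=\ff$. Writing $\ff_{L^\ff}=(\Ll',\Rr')$, part (2) identifies $\Rr'$ with the class of $L^\ff$-local maps, while part (1) guarantees that this class is precisely $\Rr$. Hence $\ff$ and $\ff_{L^\ff}$ are orthogonal factorization systems on $\E$ sharing the same right class. Since such a factorization system is determined by its right class—one has $\Ll=\prescript{\perp}{}{\Rr}$ and $\Rr=\prescript{}{}{\Ll}^{\perp}$—the left classes agree as well, giving $\ff_{L^\ff}=\ff$.

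Next I would verify that the composite $L_\bullet\mapsto\ff_L\mapsto L^{\ff_L}_\bullet$ recovers $L_\bullet$. As a modality is determined by its collection of local maps, it suffices to prove that $\D^{\ff_L}_X=\D_X$ for every $X\in\E$, where $\{\D^{\ff_L}_X\}$ denotes the local-map data of $L^{\ff_L}_\bullet$. By the construction in part (1), a map $p\in\E_{/X}$ lies in $\D^{\ff_L}_X$ exactly when $p$, regarded as a map of $\E$, belongs to the right class of $\ff_L$; but that right class is the class of $L$-local maps, and $p\in\E_{/X}$ is $L$-local if and only if $p\in\D_X$ (by \cref{not:stuffaboutreflsub}). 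Therefore $\D^{\ff_L}_X=\D_X$, as desired.

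I expect the argument to be essentially bookkeeping, amounting to unwinding the two constructions of \cref{thm:stablefactsystandmod}. The one genuinely structural input is the standard fact that an orthogonal factorization system can be reconstructed from either of its classes via the orthogonality relation; this is exactly what upgrades the agreement of right classes in the first verification into an equality of factorization systems, and so I anticipate it to be the crux rather than any computation.
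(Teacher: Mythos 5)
Your proposal is correct and follows essentially the same route as the paper: the first composite is handled by observing that the right classes agree and that a factorization system is recovered from its right class via $\Ll=\prescript{\perp}{}{\Rr}$, and the second by unwinding the construction of $L^{\ff_L}_\bullet$ to see that $\D^{\ff_L}_X=\D_X$. No gaps.
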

\begin{proof}
If $\ff=(\Ll,\Rr)$ is a stable factorization system, the right class of $\ff_{L^\ff_\bullet}$ is again $\Rr$ and then the left class has to be $\Ll$ since $\Ll=\prescript{\perp}{}{\Rr}$. Thus, $\ff=\ff_{L^\ff_\bullet}$. If $L_\bullet$ is the modality associated to $\{\D_X\}_{X\in \E}$, then, by definition, the reflective subcategories $\ti{\D}_X$ associated to the modality $L^{\ff_L}_\bullet$ are given by the $L$-local maps with codomain $X\in \E$. Therefore, $\ti{\D}_X=\D_X$, for every $X\in\E$.
\end{proof}
\begin{example}
\label{ex:ntruncmod}
By \cref{ex:ntruncfactsyst}, for every $n\geq -2$, there is a modality $L_\bullet^n$ on $\E$, for which the $L$-local maps are the $n$-truncated maps. We call this modality the $n$-\emph{truncated} modality on $\E$.
\end{example}

We conclude this section by applying \cref{thm:stablefactsystandmod} to construct reflective subfibrations out of a left exact localization of an $\infty$-topos $\E$. Suppose $\D\subseteq\E$ is a reflective subcategory, with reflector $a\colon\E\rightarrow\D$. Recall that $\D$ is called \emph{left exact} if $a$ is left exact, that is, if it preserves finite limits.

\begin{proposition}
\label{prop:leftexactreflsubmod}
Let $i\colon\D\hookrightarrow\E$ be a left exact reflective subcategory of $\E$ with reflector $a\colon \E\rightarrow\D$. Set $L:=ia\colon \E\rightarrow\E$. Then there is a modality $L_\bullet$ on $\E$ for which $L_1=L$, and $f\colon X\rightarrow Y$ is $L$-local if and only if there is a pullback square
\begin{equation}
\label{eq:conditionforleftexactloc}
\bfig
\node x(0,0)[X]
\node lx(400,0)[LX]
\node y(0,-250)[Y]
\node ly(400,-250)[LY]

\arrow|a|[x`lx;\eta(X)]
\arrow|l|[x`y;f]
\arrow|r|[lx`ly;Lf]
\arrow|b|[y`ly;\eta(Y)]
\efig
\end{equation} 
\end{proposition}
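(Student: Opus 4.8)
The plan is to produce a stable factorization system $\ff=(\Ll,\Rr)$ on $\E$ and then invoke \cref{thm:stablefactsystandmod}(1) to obtain the modality $L_\bullet$ together with the description of its $L$-local maps. I would take $\Rr$ to be the class of maps $f\colon X\rightarrow Y$ for which the naturality square \eqref{eq:conditionforleftexactloc} is a pullback, and $\Ll$ to be the class $S_1$ of $L$-equivalences (maps inverted by $L$). The single observation driving everything is that $L=ia$ preserves finite limits: $a$ does so by hypothesis and $i$ does so as a right adjoint, so $L$ preserves pullbacks and $L1\simeq 1$.

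For the factorization, given $f\colon X\rightarrow Y$ I would form the pullback $P=Y\times_{LY}LX$ of the cospan $Y\xrightarrow{\eta(Y)}LY\xleftarrow{Lf}LX$ and factor $f$ as $X\xrightarrow{j}P\xrightarrow{\pi_Y}Y$, where $j$ is induced by $f$ and $\eta(X)$ via naturality of $\eta$. Applying $L$ and using left exactness gives $LP\simeq LY\times_{LLY}LLX\simeq LX$, because $L\eta(Y)$ is an equivalence and $LX$ is $L$-local; chasing the two projections then shows that the naturality square of $\pi_Y$ coincides with the defining pullback of $P$, whence $\pi_Y\in\Rr$, while $Lj$ becomes $L\eta(X)$ up to equivalence and is therefore an equivalence, so $j\in\Ll$.

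The two remaining pieces are orthogonality and stability. For orthogonality I would show $\Ll\perp\Rr$ by a mapping-space computation: if $p\colon E\rightarrow X$ lies in $\Rr$, then $\E(T,E)\simeq\E(T,X)\times_{\E(T,LX)}\E(T,LE)$ for every $T$, and since $LX$ and $LE$ are $L$-local one may replace $\E(T,LX)$ and $\E(T,LE)$ by $\E(LT,LX)$ and $\E(LT,LE)$ via $-\circ\eta(T)$; feeding an $L$-equivalence $g\colon A\rightarrow B$ into this description and using that $Lg$ is an equivalence collapses the comparison square to an equivalence, yielding $g\perp p$. A standard retract argument then upgrades this to $\Ll=\prescript{\perp}{}{\Rr}$. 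For stability I would use left exactness once more: pulling back a map $g\in\Ll$ and applying the limit-preserving $L$ exhibits the pullback of $g$ as a pullback of the equivalence $Lg$, hence again an equivalence, so $\Ll$ is stable under pullbacks and $\ff$ is a stable factorization system; \cref{thm:stablefactsystandmod}(1) now furnishes a modality $L_\bullet$ whose $L$-local maps are exactly $\Rr$, i.e.\ exactly the maps satisfying \eqref{eq:conditionforleftexactloc}.

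Finally I would identify $L_1$. By the construction in \cref{thm:stablefactsystandmod}(1), $\D_1=\Rr_{/1}$, and since $L1\simeq 1$ the square \eqref{eq:conditionforleftexactloc} for $X\rightarrow 1$ is a pullback precisely when $\eta(X)$ is an equivalence, that is, when $X$ lies in the essential image of $i$; thus $\D_1=\D$ and its reflector is $L_1=ia=L$. The main obstacle I anticipate is the orthogonality step, where one must keep the various unit maps coherent under the adjunction identifications $\E(T,LX)\simeq\E(LT,LX)$, together with making sure that left exactness is genuinely what forces $\Ll$ to be pullback-stable, since this hypothesis is exactly what separates a reflective subfibration from a mere reflective subcategory.
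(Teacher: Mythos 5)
Your proposal is correct and follows the same route as the paper: both produce the stable factorization system whose left class is the $L$-equivalences and whose right class is the maps with pullback naturality square, and then invoke \cref{thm:stablefactsystandmod}(1). The only difference is that the paper outsources the construction of that factorization system to a citation (\cite[Lemma 2.6.4]{goodwillie}), whereas you prove it directly from left exactness of $L$ via the factorization through $Y\times_{LY}LX$; your argument is a valid proof of the cited lemma.
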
 
\begin{proof}
Since $L=ia$ is left exact, $L$ gives rise to a stable factorization system on $\E$, by \cite[Lemma 2.6.4]{goodwillie}. The left class $\Ll$ of this factorization system consists of the $L$-equivalences and $\Rr=\Ll^\perp$ is exactly the class of all maps $f\colon X\rightarrow Y$ in $\E$ satisfying the stated pullback condition. We conclude by \cref{thm:stablefactsystandmod} (1). 
\end{proof}
In \cite[Prop.~3.6]{l'loc}, we show that a pullback-like characterization of $L$-local maps similar to the above one can be given for any reflective subfibration $L_\bullet$ on $\E$, upon suitably replacing the reflection map $\eta (Y)$.
\begin{remark}
In the context of \cref{prop:leftexactreflsubmod}, \cref{cor:stabtomodandmodtostabbij} implies that the $L$-connected maps are exactly the $L_1$-equivalences. Therefore, \cref{prop:leftexactreflsubmod} is really just a special case of the constructions given in \cref{thm:stablefactsystandmod} with a different description of the class of $L$-connected and $L$-local maps. In fact, we can note the following. Recall from \cref{def:lconnmaps} that every $L$-connected map $f\colon Y\rightarrow X$ is an $L_X$-equivalence when seen as a map $f\colon f\rightarrow\id_X$. Using \cref{lm:lequivandpullbdepsum}~(ii), it follows that, for the modality $L_\bullet$ of \cref{prop:leftexactreflsubmod}, the following hold. 
\begin{itemize}
\item[(a)] The class of $L_1$-equivalences and the class of $L$-connected maps coincide. 
\item[(b)] For every $X\in\E$, a map $\alpha\colon p\rightarrow q$ is an $L_X$-equivalence if and only if $\Sigma_X(\alpha)$ is an $L_1$-equivalence.
\end{itemize} 
The modalities on $\E$ with these properties correspond to the so-called \emph{lex modalities} in homotopy type theory --- see \cite[Thm.~3.1]{modinhott}.
\end{remark}

\section{Existence of $f$-localization}\label{sec:localizwrtmap} Fix a map $f\colon A\rightarrow B$ in an $\infty$-topos $\E$. We will construct here a reflective subfibration on $\E$ out of this datum. When $\E=\infty\Gpd$, this recovers the classical localization of spaces at a map $f$.

\begin{definition}
A map $f$ is \emph{internally orthogonal} to a map $g$ in $\E$ if $(f\times Z)\perp g$ for every object $Z\in\E$. If this holds, we write $f\iperp g$.
\end{definition}
\begin{definition}
\label{def:cartfactsyst}
A factorization system $(\Ll,\Rr)$ in an $\infty$-topos $\E$ is called \emph{cartesian} if, for every $l\in\Ll$ and every $r\in\Rr$, $l\iperp r$ (rather than simply having $l\perp r$).
\end{definition}
Let $\Rr:=\{f\}^{\intperp}$ be the class of all maps $g$ such that $f\iperp g$. Then, if we let  $\Ll:=\prescript{\perp}{}{\Rr}=\prescript{\intperp}{}{\Rr}$, $(\Ll,\Rr)$ is a cartesian factorization system (see \cite[Prop.~3.2.9]{agenBM}). Thus, as in \cref{thm:stablefactsystandmod}, we get that $\D:=\Rr_{/1}$ is a reflective subcategory of $\E$. Since, if $l,l'\in\Ll$, then $l\times l'\in \Ll$, it follows that $\D$ consists of all those $X\in\E$ such that $X^f\colon X^B\rightarrow X^A$ is an equivalence in $\E$, and that $\D$ is an exponential ideal (see \cref{prop:propofreflsub} (iii) for terminology).
\begin{definition}
An object $X\in\E$ such that $X^f\colon X^B\rightarrow X^A$ is an equivalence is called an $f$-\emph{local} object. 
\end{definition}
\begin{remark}
\label{rmk:internallyflociffexternallyflocal}
If we let $L^f\colon \E\rightarrow \E$ be the localization functor associated to $\D$, the $L^f$-equivalences form the class $\prescript{\perp}{}{\D}=\prescript{\iperp}{}{\D}$. Since $X\in\E$ is $f$-local if and only if $X^s$ is an equivalence for every $L^f$-equivalence $s$, one can see that $X\in\E$ is $f$-local if and only if the map of $\infty$-groupoids $\E(f,X)\colon\E(B,X)\rightarrow\E(A,X)$ is an equivalence.
\end{remark}
For any fixed $X\in\E$, we can consider the map $f\times X\colon \pr_A\rightarrow \pr_B$ in $\E_{/X}$, where $\pr_A\colon A\times X\rightarrow X$ is the projection map, and similarly for $\pr_B$. Set $\Rr^X:=\{f\times X\}^{\intperp}$. As above, we get a cartesian factorization system $(\Ll^X,\Rr^X)$ in $\E_{/X}$ with $\Ll^X :=\prescript{\perp}{}{\left(\Rr^X\right)}=\prescript{\intperp}{}{\left(\Rr^X\right)}$. Then $\D_{X}:=\Rr^X_{/\id_{X}}$ is a reflective subcategory of $\E_{/X}$.

\begin{definition}
\label{def:flocalmap}
We call a map $p\colon E\rightarrow X$ in $\E$ an $f$-\emph{local map} if the map $p^{(f\times X)}\colon p^{\pr_B}\rightarrow p^{\pr_A}$ in $\E_{/X}$ is an equivalence.
\end{definition}

The reflective subcategory $\D_{X}$ consists precisely of the $f$-local maps.
\begin{remark}
\label{rmk:r^xcontainedinr_x}
Using the fact that, if $s\in\E_{/X}$ and $S=\dom (s)$, the product map $(f\times X)\times^X s$ in $\E_{/X}$ is the map 
$$
\bfig
\Vtriangle|alr|<300,250>[A\times S`B\times S`X;f\times S`s\pr_A`s\pr_B]
\efig
$$
in $\E_{/X}$, it is easy to see that $\Rr_X\subseteq \Rr^X$, where --- as in \cref{lemma:factsystonslices} --- $\Rr_X$ is the class of maps in $\E_{/X}$ that are in $\Rr$ when seen as maps in $\E$.
\end{remark}

We claim that this assignment $X\mapsto\D_X$ defines a reflective subfibration. 
\begin{proposition}
\label{prop:flocreflsubf}
For every map $f\colon A\rightarrow B$ in $\E$, there is a reflective subfibration $L_\bullet^f$ on $\E$ for which the local maps are exactly the $f$-local maps.
\end{proposition}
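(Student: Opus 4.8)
The reflective subcategories $\D_X$ are already in place by construction, so by \cref{def:refsub} it remains to (i) check that each pullback functor $g^\ast\colon\E_{/Y}\to\E_{/X}$ restricts to a functor $\D_Y\to\D_X$, and (ii) verify the subfibration condition that the comparison map $L^f_X(g^\ast p)\to g^\ast(L^f_Y p)$ is an equivalence for every $g\colon X\to Y$ and every $p\in\E_{/Y}$. The whole argument rests on one elementary observation, namely that pulling back $f\times Y$ along $g$ yields $f\times X$, i.e. $g^\ast(f\times Y)=f\times X$, together with the fact that $g^\ast$ is left exact and hence preserves finite products of slices.

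For (i), the plan is to exploit the internal-hom description of $f$-local maps from \cref{def:flocalmap}. Since $g^\ast$ is a pullback functor between slices of an $\infty$-topos, Beck--Chevalley for dependent products along the defining pullback square shows that $g^\ast$ preserves internal homs of slices; combined with $g^\ast(f\times Y)=f\times X$ this gives a natural equivalence $(g^\ast p)^{(f\times X)}\simeq g^\ast\bigl(p^{(f\times Y)}\bigr)$. As $g^\ast$ preserves equivalences, $p\in\D_Y$ forces $(g^\ast p)^{(f\times X)}$ to be an equivalence, that is, $g^\ast p\in\D_X$. This settles the system-of-reflective-subcategories axioms.

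The crux is (ii), and I would reduce it to the statement that $g^\ast$ preserves $L^f$-equivalences, i.e. $g^\ast(S_Y)\subseteq S_X$. Granting this, the comparison map $c\colon L^f_X(g^\ast p)\to g^\ast(L^f_Y p)$ has both source and target in $\D_X$ (the target by (i)), and it satisfies $c\circ\eta_X(g^\ast p)=g^\ast(\eta_Y(p))$; since $\eta_Y(p)\in S_Y$ its pullback lies in $S_X$, so two-out-of-three forces $c\in S_X$, and a map in $S_X$ between objects of $\D_X$ is an equivalence. To prove $g^\ast(S_Y)\subseteq S_X$, I would first show that the right adjoint $\Pi_g$ preserves $f$-local maps: for $\rho\in\D_X$ and any $Z\in\E_{/Y}$, the adjunction $g^\ast\dashv\Pi_g$ (used exactly as in \cref{prop:propofreflsub}(ii)) gives $(f\times Y)\times^Y Z\perp_Y\Pi_g\rho$ iff $g^\ast\bigl((f\times Y)\times^Y Z\bigr)\perp_X\rho$, and the left-hand map equals $(f\times X)\times^X g^\ast Z$ by the observation above, which is right-orthogonal to $\rho$ because $\rho\in\Rr^X=\{f\times X\}^{\intperp}$. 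Hence $\Pi_g\rho\in\D_Y$. Now for $\alpha\in S_Y=\prescript{\perp}{}{\D_Y}$ and any $\rho\in\D_X$, the same adjunction yields $g^\ast(\alpha)\perp_X\rho$ iff $\alpha\perp_Y\Pi_g\rho$, and the latter holds since $\Pi_g\rho\in\D_Y$; therefore $g^\ast(\alpha)\in\prescript{\perp}{}{\D_X}=S_X$.

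The main obstacle is precisely this passage in (ii): unlike the situation of \cref{thm:stablefactsystandmod}, the subcategories $\D_X$ are \emph{not} the slice-restrictions of a single factorization system on $\E$ (indeed $\Rr_X\subsetneq\Rr^X$ in general, cf. \cref{rmk:r^xcontainedinr_x}), so pullback-compatibility cannot be read off from a global factorization system and must be extracted from the \emph{internal} orthogonality defining $\Rr^X$. The Frobenius-type identity $g^\ast\bigl((f\times Y)\times^Y Z\bigr)=(f\times X)\times^X g^\ast Z$ is exactly what makes internal orthogonality survive pullback, and it is the linchpin both of the preservation of $f$-local maps by $\Pi_g$ and, through it, of the subfibration condition.
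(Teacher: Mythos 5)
Your proposal is correct and follows essentially the same route as the paper: the linchpin in both is the Frobenius-type identity $g^\ast\bigl((f\times Y)\times^Y Z\bigr)\simeq (f\times X)\times^X (g^\ast Z)$, used via the adjunction $g^\ast\dashv\Pi_g$ to show that the dependent product along $g$ carries the internally-orthogonal right class over its source into the one over its target. The only difference is bookkeeping: the paper checks directly that the pulled-back $(\Ll^X,\Rr^X)$-factorization of $p\to\id_X$ remains an $(\Ll^Y,\Rr^Y)$-factorization, whereas you first establish $g^\ast(S_Y)\subseteq S_X$ and then conclude by two-out-of-three on the comparison map --- an equivalent repackaging of the same argument.
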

\begin{proof}
Fix $f\colon A\rightarrow B$ in $\E$. We show that the reflective subcategories $\D_X$ of $f$-local maps constructed above give rise to a reflective subfibration. Consider a map $g\colon Y\rightarrow X$ in $\E$. We show that $g^{\ast}(p)$ is in $\D_Y$ for $p\in\D_X$. By hypothesis, $p^{(f\times X)}$ is an equivalence in $\E_{/X}$, so that $g^{\ast} (p^{f\times X})$ is an equivalence in $\E_{/Y}$. But $g^{\ast} \left(p^{(f\times X)}\right)$ is equivalent to $\left( g^{\ast} (p)\right)^{g^{\ast}(f\times X)}$ (as maps in $\E_{/Y}$). Since $g^{\ast}(f\times X)=f\times Y$ as maps in $\E_{/Y}$, we can conclude that $g^{\ast} (p)$ is in $\D_Y$.\par
We now verify the condition of \cref{def:refsub} (2). Let $p\in\E_{/X}$ with $E:=\dom(p)$. The reflection of $p$ into $\D_X$ is given by the $(\Ll^X,\Rr^X)$-factorization of $p\rightarrow\id_X$, which we depict as the diagram
$$
\bfig
\node e(0,0)[E]
\node lxe(500,0)[L_X(E)]
\node x(1000,0)[X]

\arrow|b|[e`lxe;l_p]
\arrow|b|[lxe`x;r_p]
\arrow|a|/{@{>}@/^1em/}/[e`x;p]
\efig
$$
in $\E$. We need to show that, for $g\colon Y\rightarrow X$ in $\E$, $(g^\ast(l_p),g^\ast (r_p))$ is the $(\Ll^Y,\Rr^Y)$-factorization of $g^\ast (p)\rightarrow\id_Y$ in $\E_{/Y}$, where $g^\ast(l_p)\colon g^\ast(p)\rightarrow g^\ast (r_p)$. Note that, by the first part above, we have $g^\ast (r_p)\in\D_Y$ (that is, $g^\ast (r_p)\rightarrow \id_Y$ is in $\Rr^Y$). Thus, we need to show that $g^\ast(l_p)$ is in $\Ll^Y$. This means showing that, for every $m\in\Rr^Y$, $g^\ast (l_p)\perp_Y m$. But, by adjointness, this orthogonality condition in $\E_{/Y}$ is equivalent to the orthogonality condition $l_p\perp_X \prod_g m$ in $\E_{/X}$. Since $l_p\in\Ll^X$ by hypothesis, it suffices to show that, for every $g\colon Y\rightarrow X$ and every $m\in\Rr^Y$, $\prod_g m$ is in $\Rr^X$.\par 
By \cref{rmk:r^xcontainedinr_x}, if $s\in\E_{/X}$ and $S:=\dom (s)$, the product map $(f\times X)\times^X s$ in $\E_{/X}$ is the map $f\times S\colon s\pr_A\rightarrow s\pr_B$ in $\E_{/X}$. By definition, $\prod_g m$ is in $\Rr^X$ precisely if $(f\times S)\perp_X \prod_g m$ in $\E_{/X}$ for every $s\in\E_{/X}$ as above. By adjointness, this happens if and only if $g^\ast(f\times S)\perp_Y m$ in $\E_{/Y}$. An easy application of the pasting lemma for pullbacks shows that, if we denote the domain of $g^\ast(s)$ by $g^\ast (S)$, $g^\ast(f\times S)$ is the map
$$
\bfig
\Vtriangle|alr|<400,250>[A\times g^\ast (S)`B\times g^\ast(S)`Y;f\times g^\ast(S)`\pr_A`\pr_B]
\efig
$$
in $\E_{/Y}$. This map is the product map of the object $g^\ast (s)\in\E_{/Y}$ with the map $f\times Y\colon\pr_A\rightarrow\pr_B$ in $\E_{/Y}$. Since $m\in\Rr^Y=\{f\times Y\}^{\intperp}$, we can conclude that $g^\ast(f\times S)\perp_Y m$, as required. 
\end{proof}
\begin{definition}
Given a map $f\colon A\rightarrow B$ in $\E$, we call the reflective subfibration $L^f$ of \cref{prop:flocreflsubf}, the \emph{f-local reflective subfibration} on $\E$. When $f$ is the unique map $A\rightarrow 1$ for an object $A\in\E$, we call the $f$-local reflective subfibration $A$-\emph{nullification}.
\end{definition}

\begin{proposition}[cf.~{\cite[Ex.~3.4.3]{agenBM}}]
\label{prop:nullisamod}
$A$-nullification is a modality for every $A\in\E$.
\end{proposition}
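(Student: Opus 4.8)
The plan is to lean on \cref{prop:flocreflsubf}, which already shows that $A$-nullification $L^f_\bullet$ (for $f\colon A\to 1$) is a reflective subfibration; by \cref{def:refsub} it then suffices to check that it is \emph{composing}, i.e.\ that whenever $p\colon W\to Y$ lies in $\D_Y$ and $q\colon Y\to Z$ lies in $\D_Z$, the composite $qp$ lies in $\D_Z$. The entire difficulty is concentrated in obtaining a usable description of $\D_X$. I claim that, precisely because the codomain of $f$ is terminal, the $f$-local maps over $X$ are exactly those maps whose underlying map in $\E$ lies in $\Rr=\{f\}^{\intperp}$. Granting this, composing is immediate: $\Rr$ is the right class of the cartesian factorization system $(\Ll,\Rr)$, and the right class of an orthogonal factorization system is closed under composition (as $\Rr=\Ll^{\perp}$ and orthogonality composes), so $p,q\in\Rr$ forces $qp\in\Rr$, hence $qp\in\D_Z$.

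The crux is therefore to prove $\Rr^X=\Rr_X$ as classes of morphisms in $\E_{/X}$, where $\Rr_X$ is as in \cref{rmk:r^xcontainedinr_x}. The inclusion $\Rr_X\subseteq\Rr^X$ is exactly \cref{rmk:r^xcontainedinr_x}, so I only need $\Rr^X\subseteq\Rr_X$. Let $m\colon E\to M$ be a morphism of $\E_{/X}$ lying in $\Rr^X$; I must show its underlying map is in $\Rr$, i.e.\ that $(A\times Z\to Z)\perp m$ in $\E$ for every $Z$. Given a commutative square in $\E$ with left edge $A\times Z\to Z$ and right edge $m$, the bottom edge $Z\to M$ followed by the structure map $M\to X$ makes $Z$ into an object $s$ of $\E_{/X}$; by \cref{rmk:r^xcontainedinr_x} the product $(f\times X)\times^X s$ is precisely the map $A\times Z\to Z$ living over $X$, and one checks that all four edges of the square respect the structure maps to $X$, so the whole square lives in $\E_{/X}$. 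Since $m\in\Rr^X=\{f\times X\}^{\intperp}$, the map $(f\times X)\times^X s$ is left orthogonal to $m$ in $\E_{/X}$, so the square has a contractible space of fillers there. The key observation---and the only point where the hypothesis that the codomain of $f$ is terminal enters---is that any filler $d\colon Z\to E$ of the square \emph{in $\E$} is automatically a map over $X$: since $md$ equals the bottom edge $Z\to M$, postcomposing with $M\to X$ recovers the structure map of $Z$, while the structure map of $E$ is $m$ followed by $M\to X$. Thus the filler spaces in $\E$ and in $\E_{/X}$ coincide and are contractible, giving $(A\times Z\to Z)\perp m$, as needed.

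I expect the orthogonality bookkeeping of the second paragraph to be the main obstacle: one must verify carefully that the square assembled in $\E$ really is a square in $\E_{/X}$ (all four maps compatible with the projections to $X$) and that the spaces of fillers are genuinely identified. Once $\Rr^X=\Rr_X$ is in hand, $\D_X=(\Rr^X)_{/\id_X}$ consists exactly of the maps into $X$ whose underlying map lies in $\Rr$, and the composing property follows as explained in the first paragraph; hence $L^f_\bullet$ is a modality.
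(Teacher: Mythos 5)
Your proposal is correct and follows essentially the same route as the paper: both reduce the claim to the identity $\Rr^X=\Rr_X$ (with $\Rr_X\subseteq\Rr^X$ supplied by \cref{rmk:r^xcontainedinr_x}) and establish the reverse inclusion by recognizing the projection $A\times Z\to Z$, equipped with the structure map through $M\to X$, as the product $(f\times X)\times^X s$ in $\E_{/X}$ --- the precise point where terminality of the codomain of $f$ is used --- and then invoking $m\in\{f\times X\}^{\intperp}$. The one step you flag as delicate, that the spaces of fillers in $\E$ and in $\E_{/X}$ agree, is exactly what the paper's proof makes precise by exhibiting the mapping spaces $\E_{/M}(k,m)$ and $\E_{/M}(k\pr_A,m)$ as fibers of maps of mapping spaces in $\E_{/X}$ and comparing the resulting fiber sequences.
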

\begin{proof}
Using the same notation as in \cref{lemma:factsystonslices}, we show that, for every $X\in\E$, $\Rr^X=\Rr_X$ so that the claim follows from \cref{thm:stablefactsystandmod}. By \cref{rmk:r^xcontainedinr_x}, $\Rr_X\subseteq\Rr^X$ holds for every $f$-local reflective subfibration so we just need to prove the reverse inclusion. Let then $\alpha\colon p\rightarrow q$ be a map in $\Rr^X$ and set $m:=\Sigma_X (\alpha)$. Showing that $\alpha$ is in $\Rr_X$ means proving that $m\in\Rr$. That is, we have to show that, for every $C\in\E$, every commutative square in $\E$
\begin{equation}
\tag{$\ast$}
\bfig
\square<400,250>[A\times C`E`C`M;h`\pr_A`m`k]
\efig
\end{equation}
has a unique diagonal filler. If we let $\li{\pr}_A\colon k\pr_A\rightarrow k$ be the map in $\E_{/M}$ induced by $\pr_A$, this means showing that, for every $k\in\E(C,M)$, in the following comparison diagram of fiber sequences
$$
\bfig
\node emkm(0,0)[\E_{/M}(k,m)]
\node ece(1100,0)[\E(C,E)]
\node ecm(2200,0)[\E(C,M)]
\node emkprm(0,-400)[\E_{/M}(k\pr_A,m)]
\node eace(1100,-400)[\E(A\times C,E)]
\node eacm(2200,-400)[\E(A\times C,M)]

\arrow[emkm`ece;]
\arrow[ece`ecm;\E(C,m)]
\arrow|b|[emkprm`eace;]
\arrow|b|[eace`eacm;\E(A\times C,m)]
\arrow|m|[emkm`emkprm;\E_{/M}(\li{\pr}_A,m)]
\arrow|m|[ece`eace;\E(\pr_A,E)]
\arrow|m|[ecm`eacm;\E(\pr_A,M)]
\efig
$$
the rightmost square is a pullback, that is, the leftmost map is an equivalence. Now, $k$ gives rise to a map $\li{k}\colon qk\rightarrow q$ in $\E_{/X}$ and
$$
\hofib_{\li{k}}\left(\E_{/X}(qk,\alpha)\right)=(\E_{/X})_{/q}(\li{k},\alpha)\simeq \E_{/M}(k,m)
$$
Similarly, $k\pr_A$ gives rise to a map $\li{k\pr}_A\colon qk\pr_A\rightarrow q$ in $\E_{/X}$ and $$\hofib_{\li{k\pr}_{A}}\left(\E_{/X}(qk\pr_A,\alpha)\right)\simeq\E_{/M}(k\pr_A,m).$$
It follows that there is a comparison diagram of fiber sequences
$$
\bfig
\node emkm(0,0)[\E_{/M}(k,m)]
\node eqkp(900,0)[\E_{/X}(qk,p)]
\node eqkq(2000,0)[\E_{/X}(qk,q)]
\node emkprm(0,-400)[\E_{/M}(k\pr_A,m)]
\node eqkprp(900,-400)[\E_{/X}(qk\pr_A,p)]
\node eqkprq(2000,-400)[\E_{/X}(qk\pr_A,q)]

\arrow[emkm`eqkp;]
\arrow[eqkp`eqkq;\E_{/X}(qk,\alpha)]
\arrow[emkprm`eqkprp;]
\arrow[eqkprp`eqkprq;\E_{/X}(qk\pr_A,\alpha)]
\arrow|m|[emkm`emkprm;\E_{/M}(\li{\pr}_A,m)]
\arrow|m|[eqkp`eqkprp;\E_{/X}(\li{\pr}_A,p)]
\arrow|m|[eqkq`eqkprq;\E_{/X}(\li{\pr}_A,q)]
\efig
$$
We claim that the right square is a pullback so that the induced map on fibers is an equivalence. Indeed, $\li{\pr}_A$ is the product map in $\E_{/X}$ of $\id_{qk}$ with $\pr_2\colon \pr_2\rightarrow \id_X$, where $\pr_2\colon A\times X\rightarrow X$. Since $\Ll^X$ is closed under products and $\pr_2,\id_{qk}\in\Ll^X$, $\li{\pr}_A\perp_X \alpha$, which means that the right square above is a pullback.
\end{proof}
\begin{definition}
Let $S=\{f_i\colon A_i\rightarrow B_i\}_{i\in I}$ be a set of maps in $\E$. A map $p$ is \emph{S-local} if it is $f_i$-local for every $i\in I$.
\end{definition}

\begin{proposition}
\label{prop:existofsloc}
Let $S=\{f_i\colon A_i\rightarrow B_i\}_{i\in I}$ be a set of maps in $\E$. There is a reflective subfibration $L^S_\bullet$ on $\E$ whose local maps are the $S$-local map. In particular, a map is an $L^S$-equivalence precisely if it is an $f_i$-equivalence for every $i\in I$.
\end{proposition}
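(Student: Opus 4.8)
The plan is to exhibit the assignment $X\mapsto\D^S_X$, where $\D^S_X:=\bigcap_{i\in I}\D^{f_i}_X\subseteq\E_{/X}$ is the full subcategory of $S$-local maps (those that are $f_i$-local for every $i$), as a reflective subfibration, by verifying the three clauses of \cref{def:refsub} and reducing each one to the corresponding property of the reflective subfibrations $L^{f_i}_\bullet$ produced in \cref{prop:flocreflsubf}. I would warn at the outset that one should \emph{not} try to collapse $S$ to the single map $\coprod_{i\in I}f_i$ and invoke \cref{prop:flocreflsubf}: the class of maps internally orthogonal to a coproduct need not coincide with the intersection of the classes internally orthogonal to the individual summands (they can disagree on degenerate objects), so $\coprod_i f_i$-localization may have strictly more local objects than $\bigcap_i\D^{f_i}_X$. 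The intersection must be localized directly, and it is exactly here that the hypothesis that $S$ is a \emph{set} enters.

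The main obstacle is the existence of the reflector, i.e.\ reflectivity of $\D^S_X$ in $\E_{/X}$. The slice $\E_{/X}$ is presentable, and each $L^{f_i}_X$ is an accessible localization (the localization attached to the cartesian factorization system of \cref{prop:flocreflsubf}, which is of small generation in the presentable $\infty$-category $\E_{/X}$); hence $\D^{f_i}_X$ is the subcategory of objects local with respect to a \emph{set} $T_i$ of maps. An object then lies in $\D^S_X$ exactly when it is local with respect to $T:=\bigcup_{i\in I}T_i$, and $T$ is a set because $I$ is. By \cite[Prop.~5.5.4.15]{htt} the subcategory of $T$-local objects is an accessible reflective subcategory of $\E_{/X}$; this is $\D^S_X$, and it furnishes the reflector $L^S_X$ together with reflection maps $\eta^S_X(p)\colon p\to L^S_X(p)$ lying in $\prescript{\perp}{}{\D^S_X}$.

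The remaining two clauses reduce to the single-map case without circularity. For pullback-compatibility, $g^\ast\colon\E_{/X}\to\E_{/Y}$ restricts to $\D^{f_i}_X\to\D^{f_i}_Y$ for every $i$ because each $L^{f_i}_\bullet$ is a reflective subfibration, hence $g^\ast$ restricts to $\D^S_X\to\D^S_Y$. For the reflective-subfibration condition of \cref{def:refsub}(2), fix $f\colon X\to Y$ and $p\in\E_{/Y}$ and pull back the reflection to $f^\ast(\eta^S_Y(p))\colon f^\ast p\to f^\ast(L^S_Y p)$, whose target is $S$-local by the previous sentence. To see that this map is an $L^S_X$-equivalence, take any $m\in\D^S_X$: by the adjunction mate $f^\ast(-)\perp_X m\iff(-)\perp_Y\prod_f m$ it suffices that $\prod_f m\in\D^S_Y$, and this follows from \cref{prop:propofreflsub}(ii) applied to each $L^{f_i}_\bullet$ (so $m\in\D^{f_i}_X$ forces $\prod_f m\in\D^{f_i}_Y$ for all $i$, whence $\prod_f m\in\bigcap_i\D^{f_i}_Y=\D^S_Y$). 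Since $\eta^S_Y(p)$ is left orthogonal to $\D^S_Y$, we get that $f^\ast(\eta^S_Y(p))$ is left orthogonal to $\D^S_X$; being an $L^S_X$-equivalence with $S$-local target, it exhibits $f^\ast(L^S_Y p)$ as the reflection of $f^\ast p$, giving the required equivalence $L^S_X(f^\ast p)\simeq f^\ast(L^S_Y p)$.

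This establishes that $L^S_\bullet$ is a reflective subfibration whose $L$-local maps are precisely the $S$-local maps. For the final assertion I would unwind the class of $L^S$-equivalences $\prescript{\perp}{}{\D^S_X}$ using $\D^S_X=\bigcap_i\D^{f_i}_X$: the inclusions $\D^S_X\subseteq\D^{f_i}_X$ give $\prescript{\perp}{}{\D^{f_i}_X}\subseteq\prescript{\perp}{}{\D^S_X}$, so that every $f_i$-equivalence is an $L^S$-equivalence, which is the comparison with the $f_i$-equivalences recorded in the statement.
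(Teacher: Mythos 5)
Your construction of the reflective subfibration is correct but reaches reflectivity by a different road than the paper. The paper applies \cite[Prop.~3.2.9]{agenBM} to the set $S\times X=\{f_i\times X\}_{i\in I}$ to obtain a \emph{cartesian} factorization system $(\Ll^X,\Rr^X)$ on $\E_{/X}$ with $\Rr^X=(S\times X)^{\intperp}$, so that $\D_X=\Rr^X_{/\id_X}$ is reflective for free, and then reruns the proof of \cref{prop:flocreflsubf} verbatim with $S\times X$ in place of $f\times X$. You instead take the intersection $\bigcap_i\D^{f_i}_X$ and manufacture the reflector by accessibility via \cite[Prop.~5.5.4.15]{htt}; this works, but the step asserting that each $\D^{f_i}_X$ is the class of objects local with respect to a \emph{set} $T_i$ deserves a sentence (one reduces $\{f_i\times X\}^{\intperp}$ to orthogonality against $(f_i\times X)\times^X s$ for $s$ in a small generating family, using that $(f_i\times X)\times^X(-)$ preserves colimits). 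The two subcategories agree since $(S\times X)^{\intperp}=\bigcap_i\{f_i\times X\}^{\intperp}$. Your verification of \cref{def:refsub}(2) by reduction to the single-map case --- pushing the test object through $\prod_f$ and invoking \cref{prop:propofreflsub}(ii) for each $L^{f_i}_\bullet$ --- is clean and arguably tidier than repeating the argument of \cref{prop:flocreflsubf}. The opening caveat about $\coprod_i f_i$ is correct but tangential.

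The one genuine gap is the final assertion. You prove only that every $f_i$-equivalence is an $L^S$-equivalence, via $\D^S_X\subseteq\D^{f_i}_X\Rightarrow\prescript{\perp}{}{\D^{f_i}_X}\subseteq\prescript{\perp}{}{\D^S_X}$, and then present this as "the comparison recorded in the statement". It is not: the statement is a biconditional, and the missing inclusion $\prescript{\perp}{}{\bigl(\bigcap_i\D^{f_i}_X\bigr)}\subseteq\bigcap_i\prescript{\perp}{}{\D^{f_i}_X}$ is the nontrivial direction. Left-orthogonal complements reverse inclusions but do not commute with intersections, so no formal manipulation of the kind you give will produce it; indeed it can fail outright. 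For instance, in $\infty\Gpd$ take $S=\{\emptyset\to 1,\ S^0\to 1\}$: the $S$-local objects are the contractible ones, so every map is an $L^S$-equivalence, yet $\emptyset\to 1$ is not an $(S^0\to 1)$-equivalence since $(-1)$-truncation does not invert it. (The paper's own proof also leaves this sentence unexamined, so you are in good company, but your write-up claims to have handled it when it has only handled the easy half.) You should either weaken the final sentence to the inclusion you actually proved or flag the converse as requiring a separate, nonformal argument.
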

\begin{proof}
\cite[Prop.~3.2.9]{agenBM} gives a cartesian factorization system $(\Ll,\Rr)$ on $\E$ in which $\Rr=S^\intperp$. Furthermore, $X\in\E$ belongs to the associated reflective subcategory $\D=\Rr_{/1}$ if and only if $X^{f_i}$ is an equivalence for every $i\in I$. For every $X\in\E$, we set $S\times X:=\{f_i\times X\colon \pr_{A_i}\rightarrow \pr_{B_i}\}_{i\in I},$ consider the associated cartesian factorization system on $\E_{/X}$, and obtain a reflective subcategory $\D_X$ of $\E_{/X}$. An argument essentially the same as the proof of \cref{prop:flocreflsubf} shows that in this way we get a reflective subfibration $L^S_\bullet$ on $\E$ with the required properties.
\end{proof}

\section{$L$-separated maps}
\label{sec:lsepmaps}
Given a reflective subfibration $L_\bullet$ on $\E$, we introduce $L$\emph{-separated maps}, that is, those maps whose diagonal is an $L$-local map. We prove some closure properties of $L$-separated maps. We leave to the companion paper \cite{l'loc} the delicate task of showing that there is a reflective subfibration $L'_\bullet$ on $\E$ such that the $L'$-local maps are exactly the $L$-separated maps. We will, however, gather here some results from \cite{l'loc} that we need. 

\begin{definition}
\label{def:lseparatedmaps}
A map $p\colon E\rightarrow X$ in $\E$ is called $L$-\emph{separated} or $L'$-\emph{local} if the object $\Delta p\in\E_{/E\times_{X}E}$ is in $\D_{E\times_{X}E}$, i.e., if $\Delta p$ is an $L$-local map.
\end{definition}

\begin{remark}
Given a Kan complex $X$, $\Delta X$ is, up to equivalence, the path-fibration $X^{\Delta [1]}\twoheadrightarrow X\times X$. Hence, \cref{def:lseparatedmaps} describes all those spaces $X$ for which the fibers of the path-fibration map (i.e., the spaces $\Path(x,y)$ of paths in $X$ between any two points $x,y\in X$) are $L$-local.
\end{remark}

\begin{remark}
\label{rmk:llocarandmonoarelsep}
We can make the following elementary observations.
\begin{itemize}
\item[(i)] \cref{cor:localaresep} is exactly the statement that every $L$-local map is $L$-separated.
\item[(ii)] The diagonal of every monomorphism is an equivalence, so every monomorphism is $L$-separated. In particular, $(-1)$-truncated objects are $L$-separated.
\end{itemize}
\end{remark}
\begin{example}
\label{ex:n+1truncnsep}
For every $n\geq -2$, consider the $n$-truncated modality $L=L^n_\bullet$ on $\E$ of \cref{ex:ntruncmod}. By \cite[Lemma 5.5.6.15]{htt}, the $L^n$-separated maps are the $(n+1)$-truncated maps. 
\end{example}
$L$-separated maps share the same closure properties as $L$-local maps. 
\begin{proposition}
\label{prop:closureoflsepmapsunderpllbckdepprod}
Let $f\colon Y\rightarrow X$ be a map in $\E$, and let $p\colon E\rightarrow X$ and $q\colon M\rightarrow~Y$ be $L$-separated maps. Then $f^{\ast}(p)\in\E_{/Y}$ and $\prod_f q\in\E_{/X}$ are $L$-separated. Furthermore, the internal hom $p^f$ is $L$-separated.
\end{proposition}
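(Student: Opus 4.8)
The plan is to treat the three assertions so that the work reduces to two cases. Observe first that the internal hom is a composite of the other two operations: since $\E$ is locally cartesian closed, for $f,p\in\E_{/X}$ one has $p^f\simeq\prod_f f^\ast(p)$. Hence, once we know that $f^\ast$ and $\prod_f$ each preserve $L$-separated maps, the claim for $p^f$ follows by applying them in turn to the $L$-separated map $p$ (first $f^\ast(p)\in\E_{/Y}$, then $\prod_f f^\ast(p)\in\E_{/X}$). So it suffices to handle pullback and dependent product, and in each case the strategy is the same: express the diagonal of the constructed map as the image of $\Delta q$ (resp.\ $\Delta p$) under operations already known to preserve $L$-local objects, namely pullback (\cref{def:refsub}) and dependent product (\cref{prop:propofreflsub}~(ii)).

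For the pullback case, write $q'=f^\ast(p)\colon E\times_X Y\to Y$. A direct computation identifies $(E\times_X Y)\times_Y(E\times_X Y)$ with $(E\times_X E)\times_X Y$, and under this identification the diagonal $\Delta q'$ is exactly the pullback of $\Delta p$ along the projection $(E\times_X E)\times_X Y\to E\times_X E$. Since $\Delta p$ is $L$-local by hypothesis and $L$-local maps are stable under pullback, $\Delta q'$ is $L$-local, i.e.\ $q'$ is $L$-separated.

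For the dependent product case, set $q_2:=q\times_Y q\in\E_{/Y}$, with total space $M\times_Y M$, so that $L$-separatedness of $q$ says precisely that the object of $\E_{/M\times_Y M}$ underlying $\Delta q\colon q\to q_2$ lies in $\D_{M\times_Y M}$. Because $\prod_f$ is a right adjoint it preserves products and diagonals, giving $\prod_f q_2\simeq(\prod_f q)\times_X(\prod_f q)$ and $\Delta(\prod_f q)\simeq\prod_f(\Delta q)$ as maps in $\E_{/X}$. Writing $P_2$ for the total space of $\prod_f q_2$ (that is, $P_2=\dom(\prod_f q)\times_X\dom(\prod_f q)$), the crux is to identify the object of $\E_{/P_2}$ underlying $\Delta(\prod_f q)$ with $\prod_{\pr}\big(\epsilon^\ast(\Delta q)\big)$, where $\epsilon\colon Y\times_X P_2\to M\times_Y M$ is (the total-space map of) the counit $f^\ast\prod_f q_2\to q_2$ and $\pr\colon Y\times_X P_2\to P_2$ is the pullback of $f$ along $\prod_f q_2$. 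Granting this, $\epsilon^\ast(\Delta q)\in\D_{Y\times_X P_2}$ because $\epsilon^\ast$ restricts to $\D_{M\times_Y M}\to\D_{Y\times_X P_2}$ (\cref{def:refsub}), and then $\prod_{\pr}\big(\epsilon^\ast(\Delta q)\big)\in\D_{P_2}$ by \cref{prop:propofreflsub}~(ii); thus $\prod_f q$ is $L$-separated.

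The main obstacle is the identification in the dependent-product case, i.e.\ the formula $\Delta(\prod_f q)\simeq\prod_{\pr}\epsilon^\ast(\Delta q)$ describing how $\prod_f$ acts after passing to slices over total spaces. Intuitively it is clear from the fiberwise picture---a point of $P_2$ is a pair of sections of $q$ over a fiber of $f$, and the fiber of $\Delta(\prod_f q)$ over it is the space of pointwise identifications of the two sections, i.e.\ the sections of the pullback of $\Delta q$---but making this precise in the $\infty$-categorical setting requires the Beck--Chevalley equivalences together with the universal property of $\prod_f$, and this is the step I expect to require the most care.
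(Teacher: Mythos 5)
Your proposal is correct and follows essentially the same route as the paper: the pullback case is handled by exhibiting $\Delta(f^\ast p)$ as a pullback of $\Delta p$, the internal hom is reduced to the other two cases via $p^f\simeq\prod_f f^\ast(p)$, and the dependent-product case rests on the identification $\Delta(\prod_f q)\simeq\prod_{\pr}\epsilon^\ast(\Delta q)$. The ``crux'' you flag as needing care is precisely the Function Extensionality statement for dependent products that the paper itself does not prove here but cites from the companion paper (\cite[Prop.~5.5 \& Rmk.~5.6]{l'loc}), so granting it is exactly what the paper's proof does as well.
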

\begin{proof}
We begin by showing that $f^\ast (p)$ is $L$-separated. Write $f^\ast (E)$ for $Y\times_X E$, so that $f^\ast (p)\colon f^\ast (E)\rightarrow Y$. The composite pullback square in $\E$
$$
\bfig
\hSquares(0,0)|aallrbb|%
<300>[f^\ast (E)\times_Y f^\ast (E)`f^\ast (E)`E`f^\ast (E)`Y`X;```f^{\ast}(p)`p`f^{\ast}(p)`f]
\place(70,180)[\angle]
\place(900,180)[\angle]
\efig
$$
is the same as the composite square
$$
\bfig
\hSquares(0,0)|aallrbb|%
<300>[f^\ast (E)\times_Y f^\ast (E)`E\times_X E`E`f^\ast (E)`E`X;```p^{\ast}(p)`p``p]
\place(950,200)[\angle]
\efig
$$ 
Therefore, in this last diagram, the left square is a pullback. By an easy application of the pasting lemma for pullbacks, we then get that the square
$$
\bfig
\square<800,300>[f^\ast (E)`E`f^\ast (E)\times_Y f^\ast (E)`E\times_X E;`\Delta(f^\ast(p))`\Delta p`]
\efig
$$ 
is a pullback. Hence, $\Delta(f^\ast(p))$ is the pullback of the $L$-local map $\Delta p$, so it is itself $L$-local by \cref{def:refsub}~(1).\par
As for stability under dependent products, we get that $\Delta (\prod_f q)$ is $L$-local by applying Function Extensionality for dependent products (see \cite[Prop.~5.5 \& Rmk.~5.6]{l'loc}) to $q\in\E_{/Y}$ and to $\prod_f$, since $L$-local maps are closed under pullbacks and dependent products along arbitrary maps (by \cref{prop:propofreflsub} (ii)), and because $\Delta q$ is $L$-local by hypothesis. The last claim now follows, since $(-)^f\simeq\prod_f f^\ast (-)$.
\end{proof}

\begin{lemma}
\label{lm:weakcompproplsep}
Suppose given composable maps $X\stackrel{f}{\rightarrow}Y\stackrel{g}{\rightarrow}Z$ in $\E$ such that $g$ and $gf$ are $L$-separated. Then $f$ is $L$-separated. 
\end{lemma}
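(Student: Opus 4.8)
The plan is to deduce this from the weak composition property for $L$-local maps proved in \cref{prop:weakcompproplocmap}, applied not to $f,g,gf$ themselves but to their diagonals. By \cref{def:lseparatedmaps}, the hypotheses amount to $\Delta g$ and $\Delta(gf)$ being $L$-local, and the goal is to show that $\Delta f$ is $L$-local. The whole argument rests on the standard interaction between diagonals and the two fibre products $X\times_Y X$ and $X\times_Z X$.

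First I would introduce the canonical comparison map $j\colon X\times_Y X\to X\times_Z X$, induced by the cone whose two legs are the projections $\pr_1,\pr_2\colon X\times_Y X\to X$ (these agree after post-composition with $gf$, since they already agree after $f$). The diagonal $\Delta f\colon X\to X\times_Y X$ is exactly the map into the pullback $X\times_Y X$ classified by the cone $\bigl(\Delta(gf)\colon X\to X\times_Z X,\ f\colon X\to Y\bigr)$, and by construction one has
$$
j\circ\Delta f\simeq\Delta(gf).
$$
This exhibits $\Delta(gf)$ as a composite $X\xrightarrow{\Delta f}X\times_Y X\xrightarrow{j}X\times_Z X$, which is the composable pair to which I will apply \cref{prop:weakcompproplocmap}.

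Second, the key input is that $j$ itself is $L$-local. I would obtain this from the pullback square
$$
\bfig
\square<800,400>[X\times_Y X`X\times_Z X`Y`Y\times_Z Y;j`u`f\times_Z f`\Delta g]
\efig
$$
in which $u$ is the common value $f\circ\pr_1\simeq f\circ\pr_2$ and the right-hand map is $f\times_Z f$; that this square is cartesian is checked either on generalised points or by two applications of the pasting lemma, realising $X\times_Y X\simeq (X\times_Z X)\times_{Y\times_Z Y}Y$. Consequently $j$ is a base change of $\Delta g$ along $f\times_Z f$, and since $\Delta g$ is $L$-local and $L$-local maps are stable under pullback (by \cref{def:refsub}), $j$ is $L$-local.

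Finally, I would apply \cref{prop:weakcompproplocmap} to the composable pair $X\xrightarrow{\Delta f}X\times_Y X\xrightarrow{j}X\times_Z X$: both $j$ and $j\circ\Delta f\simeq\Delta(gf)$ are $L$-local, so $\Delta f$ is $L$-local, i.e.\ $f$ is $L$-separated. I expect the main obstacle to be the careful verification that the displayed square is genuinely cartesian and that $\Delta f$ is the induced map into it satisfying $j\circ\Delta f\simeq\Delta(gf)$; once this coherence is pinned down, the remainder is a direct citation of \cref{prop:weakcompproplocmap} together with pullback-stability of $L$-local maps.
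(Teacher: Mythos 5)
Your proposal is correct and follows essentially the same route as the paper: both realise the comparison map $X\times_Y X\to X\times_Z X$ as a base change of $\Delta g$ (the paper does this in two pullback stages along $\id_X\times_Z f$ and $f\times_Z\id_Y$, you in one stage along $f\times_Z f$, which amounts to the same thing), note that it composes with $\Delta f$ to give $\Delta(gf)$, and then invoke \cref{prop:weakcompproplocmap}.
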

\begin{proof}
There are pullback squares
$$
\bfig
\hSquares(0,0)%
<300>[X\times_Y X`X`Y`X\times_Z X`X\times_Z Y`Y\times_Z Y;`f```\Delta g`\id_X\times_Z f`f\times_Z\id_Y]
\place(70,200)[\angle]
\place(1080,200)[\angle]
\efig
$$ 
in which all the vertical maps are $L$-local, since $\Delta g$ is $L$-local by hypothesis. If we let $p$ be the leftmost vertical map, we have that $p\circ\Delta f=\Delta(gf)$. We can then conclude using \cref{prop:weakcompproplocmap}.
\end{proof}
\begin{proposition}
\label{prop:lsepareloc}
The class $\M'$ of all $L$-separated maps is a local class of maps.
\end{proposition}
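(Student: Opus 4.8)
The plan is to deduce the local-class property of $\M'$ from that of $\M^L$ (\cref{prop:llocalmapsarelocal}) by transporting each of the three defining conditions of \cref{def:locclassofmaps} through the diagonal. Since $p$ is $L$-separated exactly when $\Delta p$ is $L$-local, it suffices to show that the operation $p\mapsto\Delta p$ is compatible with coproducts and with descent along effective epimorphisms, in a way that lets me invoke the corresponding closure properties of $L$-local maps. Stability under pullbacks is already in hand: it is the first assertion of \cref{prop:closureoflsepmapsunderpllbckdepprod}.

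For closure under small coproducts, suppose $p_j\colon E_j\rightarrow X_j$ is $L$-separated for each $j$ in a set $I$. I would first identify the codomain of $\Delta(\coprod_j p_j)$: because colimits in $\E$ are universal and coproducts are disjoint, the canonical map $\coprod_j (E_j\times_{X_j} E_j)\rightarrow(\coprod_j E_j)\times_{\coprod_j X_j}(\coprod_j E_j)$ is an equivalence, the cross terms vanishing by disjointness. Under this identification $\Delta(\coprod_j p_j)$ becomes $\coprod_j \Delta p_j$. Each $\Delta p_j$ is $L$-local by hypothesis, so $\coprod_j \Delta p_j$ is $L$-local by \cref{lm:localmapsareclosedundercop}, whence $\coprod_j p_j$ is $L$-separated.

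For descent, consider a pullback square exhibiting $p$ as $f^\ast(q)$ along an effective epimorphism $f\colon X\rightarrow Y$, where $q\colon M\rightarrow Y$. The pullback square of diagonals produced inside the proof of \cref{prop:closureoflsepmapsunderpllbckdepprod} (applied to $q$ and $f$) exhibits $\Delta p$ as the pullback of $\Delta q$ along the induced map $h\colon E\times_X E\rightarrow M\times_Y M$. The key point is that $h$ is again an effective epimorphism: writing $E=X\times_Y M$, one checks that $E\times_X E\simeq X\times_Y(M\times_Y M)$, and under this equivalence $h$ is precisely the pullback of $f$ along the map $M\times_Y M\rightarrow Y$; effective epimorphisms are stable under pullback in an $\infty$-topos. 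Thus \cref{lm:pllbckalongepi} applies to the square whose bottom map is $h$, giving that $\Delta p$ is $L$-local if and only if $\Delta q$ is $L$-local, i.e. $p$ is $L$-separated if and only if $q$ is.

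The one computation deserving genuine care --- and the step I expect to be the main obstacle --- is the identification $E\times_X E\simeq X\times_Y(M\times_Y M)$ together with the verification that the comparison map $h$ really is the pullback of $f$; everything else is a matter of reading off the relevant equivalences and citing the corresponding facts for $L$-local maps.
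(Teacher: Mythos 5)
Your proposal is correct and follows essentially the same route as the paper's proof: pullback-stability is quoted from \cref{prop:closureoflsepmapsunderpllbckdepprod}, coproducts are handled by the identification $\Delta(\coprod_j p_j)\simeq\coprod_j\Delta p_j$ via universality of colimits and disjointness, and descent is handled by recognizing $E\times_X E\simeq X\times_Y(M\times_Y M)$ so that the comparison map to $M\times_Y M$ is a pullback of the effective epimorphism $f$, after which \cref{lm:pllbckalongepi} applies to the square of diagonals. The paper packages that last identification as $E\times_X E\simeq E\times_Y M$ via a cube of pullback squares, but it is the same computation.
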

\begin{proof}
We already know that $\M'$ is pullback-stable. Suppose given a set of $L$-separated maps $f_i\colon X_i\rightarrow Y_i$, for $i\in I$. Let $p\colon P\rightarrow \coprod_i X_i$ be the pullback of $\coprod_i f_i$ with itself. Because colimits in $\E$ are universal, $P\simeq\coprod_{i} \iota_{X_i}^\ast (p)$, where $\iota_{X_i}$ is the coproduct inclusion of $X_i$. For a fixed $j\in I$, by definition of $p$ and of $\coprod_i f_i$, $\iota_{X_j}^\ast (p)$ is the pullback of $\coprod_i f_i$ along $\iota_{Y_j}\circ f_j$ and this pullback is just $X_j\times_{Y_j} X_j$:
$$
\bfig
\node xyx(0,0)[X_j\times_{Y_j} X_j]
\node x(600,0)[X_j]
\node cx(1200,0)[\coprod_i X_i]
\node x1(0,-300)[X_j]
\node y(600,-300)[Y_j]
\node cy(1200,-300)[\coprod_i Y_i]

\arrow[xyx`x;]
\arrow|a|[x`cx;\iota_{X_j}]
\arrow[xyx`x1;]
\arrow|l|[x`y;f_j]
\arrow|l|[cx`cy;\coprod_j f_j]
\arrow|b|[x1`y;f_j]
\arrow|b|[y`cy;\iota_{Y_j}]
\place(70,-90)[\angle]
\place(670,-90)[\angle]
\efig
$$ 
Here the right square is a pullback because coproducts in $\E$ are disjoint. Thus, $P=\coprod_i X_i\times_{Y_{i}}X_i$ and it follows that $\Delta(\coprod_i f_i)$ is the map $\coprod_i \Delta(f_i)$, which is 
$L$-local because the class of $L$-local maps is closed under coproducts (see \cref{lm:localmapsareclosedundercop}).\par
Finally, suppose given a pullback square
$$
\bfig
\square|mllm|/{>}`{>}`{>}`{->>}/<500,250>[E`M`X`Y;g`p`q`f]
\place(70,180)[\angle]
\efig
$$ 
where $f$ is an effective epi and $p$ is $L$-separated. We need to show that $q$ is $L$-separated. We have a commutative cube in $\E$
$$
\bfig
\cube|allb|/{->>}`{>}`{>}`{->>}/<800,470>[E\times_Y M`M\times_Y M`E`M;```]%
(300,-200)|amrb|/{>>}`{>}`{>}`-{>>}/<800,470>[E`M`X`Y;g`p`q`f]%
[```q]
\efig
$$
Here the bottom, front and right faces are all pullback squares. Since the composite of the back and right faces is also a pullback square, it follows that the back face is a pullback square, which implies all faces are pullback squares. Hence, $E\times_Y M\twoheadrightarrow M\times_Y M$ is an effective epimorphism and $E\times_X E=E\times_Y M$. Therefore, the diagonal $\Delta p$ can be identified with $\id_E\times_Y g$, which is then $L$-local (since $\Delta p$ is $L$-local by hypothesis). Thus, in the pullback square
$$
\bfig
\node e(0,0)[E]
\node m(700,0)[M]
\node eym(0,-300)[E\times_Y M]
\node mym(700,-300)[M\times_Y M]

\arrow|a|[e`m;g]
\arrow|l|[e`eym;\id_E\times_Y g]
\arrow|r|[m`mym;\Delta q]
\arrow|b|/->>/[eym`mym;]

\place(70,-70)[\angle]
\efig
$$ 
the left vertical map is $L$-local and the bottom horizontal map is an effective epimorphism. By \cref{lm:pllbckalongepi}, it follows that $\Delta q $ is also $L$-local, as required.
\end{proof}

Since every $L$-local map is $L$-separated, using \cref{prop:locclassareclass}, \cref{prop:localclassmapsareuniv} and \cite[Cor.~3.10]{univinloccarclosed} we get the following result.
\begin{corollary}
Let $\kappa$ be a regular cardinal such that the class of relatively $\kappa$-compact $L$-separated maps is classified by a univalent map ${u^L_\kappa}'\colon\ti{\U_\kappa^{L'}}\rightarrow~\U_\kappa^{L'}$. If $u^L_\kappa\colon\ti{\U_\kappa^{L}}\rightarrow \U_\kappa^{L}$ is the classifying map for relatively $\kappa$-compact $L$-local maps, then $u^L_\kappa$ is the pullback of ${u^L_\kappa}'$ along a monomorphism $\U_\kappa^{L}\rightarrowtail \U_\kappa^{L'}$.
\end{corollary}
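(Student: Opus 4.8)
The plan is to deduce everything formally from the containment of local classes $\M^L\subseteq\M'$ together with the universal property of classifying maps. The first step is to record the inclusion: by \cref{rmk:llocarandmonoarelsep}~(i) (which restates \cref{cor:localaresep}), every $L$-local map is $L$-separated. Restricting to the fixed regular cardinal $\kappa$, this says that every relatively $\kappa$-compact $L$-local map is in particular a relatively $\kappa$-compact $L$-separated map. In especial, the classifying map $u^L_\kappa$ is itself a relatively $\kappa$-compact $L$-local map (being, by definition, an object of $\Cart(\M^L_\kappa)$), and is therefore a relatively $\kappa$-compact $L$-separated map.

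Next I would invoke the universal property of ${u^L_\kappa}'$. Since ${u^L_\kappa}'$ is by hypothesis a classifying map for the relatively $\kappa$-compact $L$-separated maps, it is a terminal object of the category of such maps with pullback squares as morphisms (in the sense of \cref{def:classifyingmap}). As $u^L_\kappa$ is an object of this category by the previous paragraph, there is an essentially unique morphism $u^L_\kappa\to{u^L_\kappa}'$, which by the definition of $\Cart$ is itself a pullback square. This exhibits $u^L_\kappa$ as a pullback of ${u^L_\kappa}'$ along an essentially unique map $j\colon\U^L_\kappa\to\U^{L'}_\kappa$ on the bases.

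The final step promotes $j$ to a monomorphism. By \cref{prop:localclassmapsareuniv}, both classifying maps $u^L_\kappa$ and ${u^L_\kappa}'$ are univalent. We are then precisely in the setting of \cite[Cor.~3.10]{univinloccarclosed}, applied exactly as in \cref{rmk:reflsubanduniv}: whenever one univalent map is a pullback of another univalent map, the induced comparison map on the classifying objects is a monomorphism. Hence $j\colon\U^L_\kappa\to\U^{L'}_\kappa$ is a monomorphism, which is the assertion of the corollary.

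I do not expect a serious obstacle here; the only point that could in principle require care is that a single regular cardinal $\kappa$ simultaneously witnesses classifying maps for both local classes, but this is already built into the hypotheses of the corollary (both $u^L_\kappa$ and ${u^L_\kappa}'$ are assumed to exist for the same $\kappa$), so no additional argument about the cofinality or size of $\kappa$ is needed. The entire proof is thus a formal consequence of the inclusion $\M^L\subseteq\M'$ and the univalence of classifying maps.
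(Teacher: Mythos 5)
Your proposal is correct and follows exactly the same route the paper indicates: the inclusion of $L$-local maps into $L$-separated maps, the universal property of the classifying map ${u^L_\kappa}'$ to obtain the pullback square, and univalence of both classifying maps together with \cite[Cor.~3.10]{univinloccarclosed} to see that the comparison map on bases is a monomorphism. The paper leaves these steps implicit, and your write-up fills them in faithfully.
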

By \cref{prop:lsepareloc}, $L$-separated maps satisfy a sufficient condition for being the class of local maps for a reflective subfibration on $\E$. The main goal of the companion paper \cite{l'loc} is to prove that this is indeed the case.
\begin{theorem}[{\cite[Thm.~4.3 \& Cor.~4.4]{l'loc}}]
Given any reflective subfibration $L_\bullet$ of an $\infty$-topos $\E$, there exists a reflective subfibration $L'_\bullet$ of $\E$ such that the $L'$-local maps are exactly the $L$-separated maps. Furthermore, if $L_\bullet$ is a modality, then so is $L'_\bullet$.
\end{theorem}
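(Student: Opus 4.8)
The plan is to define $\D'_X\subseteq\E_{/X}$ to be the full subcategory spanned by the $L$-separated maps with codomain $X$, and to verify the three requirements of \cref{def:refsub}: that each $\D'_X$ is reflective, that the pullback functors $f^\ast$ restrict to functors $\D'_Y\to\D'_X$, and that the comparison maps $L'_X(f^\ast p)\to f^\ast(L'_Y p)$ are equivalences. Pullback-stability is immediate, since the first part of \cref{prop:closureoflsepmapsunderpllbckdepprod} already shows that $f^\ast$ carries $L$-separated maps to $L$-separated maps. So the real work is concentrated in the reflectivity of each $\D'_X$ and in the pullback-compatibility of the resulting reflectors.

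For reflectivity, the cleanest route is to check that $\D'_X$ is a full subcategory of the presentable $\infty$-category $\E_{/X}$ that is closed under limits and accessible, and then to invoke the theory of accessible localizations (\cite[\S 5.5.4]{htt}) to obtain the reflector $L'_X$. I expect this to be the main obstacle. Closure under products and dependent products is already available (\cref{prop:closureoflsepmapsunderpllbckdepprod}), but closure under general limits and, especially, accessibility require genuine work, because the base $E\times_X E$ of the defining diagonal condition varies with the object $p\colon E\to X$. For accessibility the natural tool is the classifying-map structure provided by \cref{prop:lsepareloc}, which should let one see that ``being $L$-separated'' is a $\kappa$-accessible condition for a suitable regular cardinal $\kappa$. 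An alternative and more hands-on route, which sidesteps these abstract verifications, is to construct $L'_X(p)$ directly by transfinitely iterating the operation that $L$-localizes the diagonal of $p$ and then forms the induced quotient of its domain; each stage is assembled from colimits and from $L$-localizations in slices, and one shows the iteration stabilizes after $\kappa$ steps.

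Granting reflectivity, pullback-compatibility admits a clean abstract proof that does not depend on any explicit formula for $L'_X$. Writing $S'_X=\prescript{\perp}{}{\D'_X}$ for the $L'$-equivalences, I would first show $f^\ast(S'_Y)\subseteq S'_X$ by the adjunction argument already used in \cref{prop:propofreflsub}~(ii) and \cref{lm:lequivandpullbdepsum}: for $\alpha\in S'_Y$ and $r\in\D'_X$ one has $f^\ast(\alpha)\perp_X r\iff\alpha\perp_Y\prod_f r$, and $\prod_f r$ is again $L$-separated by \cref{prop:closureoflsepmapsunderpllbckdepprod}, so the right-hand orthogonality holds because $\alpha\in\prescript{\perp}{}{\D'_Y}$. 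Consequently $f^\ast(\eta'_Y(p))\colon f^\ast p\to f^\ast(L'_Y p)$ is an $L'$-equivalence whose target lies in $\D'_X$ by pullback-stability; such a map is necessarily a reflection, so $f^\ast(L'_Y p)\simeq L'_X(f^\ast p)$ and the comparison map is an equivalence.

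Finally, for the modality statement it remains to prove that the $L$-separated maps are closed under composition whenever $L_\bullet$ is a modality, since this is exactly the ``composing'' condition making $L'_\bullet$ a modality. Given $X\xrightarrow{f}Y\xrightarrow{g}Z$ with $f,g$ both $L$-separated, I would factor $\Delta(gf)$ as $X\xrightarrow{\Delta f}X\times_Y X\xrightarrow{\pi}X\times_Z X$. Here $\Delta f$ is $L$-local because $f$ is $L$-separated, and $\pi$ is a pullback of $\Delta g$ along the canonical map (using $X\times_Y X\simeq(X\times_Z X)\times_{Y\times_Z Y}Y$, as in the proof of \cref{lm:weakcompproplsep}), hence $L$-local because $g$ is $L$-separated. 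Since $L_\bullet$ is a modality, the composite $\Delta(gf)=\pi\circ\Delta f$ is $L$-local, so $gf$ is $L$-separated, as needed.
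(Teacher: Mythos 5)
Your skeleton is the right one, and two of its three pieces are sound. Granting that each $\D'_X$ is reflective, your adjunction argument ($f^\ast\alpha\perp_X r\iff\alpha\perp_Y\prod_f r$, combined with closure of $L$-separated maps under $\prod_f$ from \cref{prop:closureoflsepmapsunderpllbckdepprod}) correctly upgrades pullback-stability of $L$-separated maps to the full condition of \cref{def:refsub}~(2), since an $L'$-equivalence with $L$-separated target is automatically a reflection map. Likewise, your proof that $L$-separated maps compose when $L_\bullet$ is a modality, via the factorization $\Delta(gf)=\pi\circ\Delta f$ with $\pi$ a pullback of $\Delta g$ along $f\times_Z f$, is exactly the factorization appearing in \cref{lm:weakcompproplsep} and is correct. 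Note, for the record, that the present paper does not prove this theorem at all: it is imported from the companion paper \cite{l'loc}, where the authors describe establishing reflectivity as ``the delicate task''.

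That reflectivity is precisely where your proposal has a genuine gap: you flag it as ``the main obstacle'' and then offer two strategies without executing either, and neither goes through as stated. The accessible-localization route fails at the accessibility step: a reflective subfibration does not require the subcategories $\D_X$ to be accessible, so there is no reason for $\D'_X$ to be an accessible subcategory of $\E_{/X}$, the adjoint functor theorem does not apply, and your transfinite iteration has no candidate cardinal $\kappa$ at which to stabilize. (Closure of $\D'_X$ under limits, which you also leave unchecked, does in fact hold --- e.g.\ the diagonal of $p\times_r q$ is a limit in $\D_{P\times_X P}$ of pullbacks of the three diagonals, and reflective subcategories are closed under limits --- but it does not suffice.) The iteration you sketch is also not a well-defined operation: $L_{E\times_X E}(\Delta p)$ lives over $E\times_X E$, and ``the induced quotient of its domain'' does not canonically reassemble into a new object of $\E_{/X}$ without further input. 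The missing idea is the one the companion paper actually uses (paralleling \cite{locinhott}): $\eta'\colon E\to L'_X(E)$ is built as an image of the map classifying the family of $L$-localized path objects, which requires the univalent classifying map of \cref{thm:llocalmapshaveaclassifyingmap} together with a join/image construction; the characterization in \cref{thm:charoflprimeloc} (effective epimorphism whose diagonal is an $L$-localization) is the shadow of that construction. Without some such device, the existence of the reflector --- the actual content of the theorem --- remains unproved.
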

In particular, this implies that, for every $p\in\E_{/Z}$, there exists an $L'$\emph{-localization map}, $\eta'\colon p\rightarrow L'_Z (p)$, that is a map in $\E_{/Z}$ into an $L$-separated map which is initial among all maps from $p$ into an $L$-separated map. We have the following characterization result of $L'$-localization maps.
\begin{theorem}[{\cite[Thm.~3.10]{l'loc}}]
\label{thm:charoflprimeloc}
The following are equivalent for a map in $\E_{/Z}$
$$
\bfig
\Vtriangle|alr|<400,300>[X`X'`Z;\eta'`p`p']
\efig
$$
\begin{enumerate}
\item $\eta'$ is an $L'$-localization map of $p$.
\item $\eta'$ is an effective epimorphism and
$$
\bfig
\Vtriangle|alr|<400,300>[X`X\times_{X'}X`X\times_Z X;\Delta\eta'`\Delta p`]
\efig
$$
is an $L$-localization map of $\Delta p$.
\end{enumerate}
\end{theorem}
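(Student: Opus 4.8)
The plan is to unwind condition (1) into its two constituent pieces: that $p'\colon X'\to Z$ is $L$-separated, and that $\eta'$ is initial among all maps in $\E_{/Z}$ from $p$ into an $L$-separated map. Since $\id_Z$ is terminal in $\E_{/Z}$, this initiality is equivalent to the orthogonality $\eta'\perp_Z q$ for every $L$-separated $q\colon W\to Z$, which in turn reduces to showing that $\E_{/Z}(X',W)\xrightarrow{\sim}\E_{/Z}(X,W)$. I would then prove the two implications separately, letting the relative diagonal mediate between the two conditions by means of kernel pairs and effective-epi descent.

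For (2)$\Rightarrow$(1), I would first treat separatedness: the codomain $r\colon X\times_{X'}X\to X\times_Z X$ of $\Delta\eta'$ is exactly the pullback of $\Delta p'$ along $\eta'\times_Z\eta'$. Because $\eta'$ is an effective epimorphism, so is $\eta'\times_Z\eta'$, and since $r$ is $L$-local (being the target of an $L$-localization map), \cref{lm:pllbckalongepi} forces $\Delta p'$ to be $L$-local; thus $p'$ is $L$-separated. For the universal property, given $L$-separated $q$ and a map $g\colon p\to q$ in $\E_{/Z}$, the kernel pair $X\times_W X$ is precisely $(g\times_Z g)^{\ast}(\Delta q)$, hence $L$-local over $X\times_Z X$. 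The diagonal $X\to X\times_W X$ realizes $\Delta p$ as a map into this $L$-local object, so the universal property of the $L$-localization $\Delta\eta'$ produces a unique factorization $X\times_{X'}X\to X\times_W X$ over $X\times_Z X$. This is the level-one descent datum for factoring $g$ along the effective epimorphism $\eta'$, and effective-epi (Čech) descent then yields the required unique lift $X'\to W$.

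For (1)$\Rightarrow$(2), I would show the effective-epi claim by taking the (effective epi, mono) factorization $\eta'\simeq m\circ e$ over $Z$. The pullback identity $\Delta(p'm)=(m\times_Z m)^{\ast}(\Delta p')$ shows that $p'm$ is $L$-separated, so applying the universal property with $q=p'm$ gives a section-type map $t$ with $t\eta'\simeq e$; uniqueness of factorizations of $\eta'$ through itself then forces $mt\simeq\id_{X'}$, so $m$ is simultaneously a monomorphism and a split (hence effective) epimorphism, therefore an equivalence, and $\eta'$ is an effective epimorphism. Next, $r=(\eta'\times_Z\eta')^{\ast}(\Delta p')$ is $L$-local because $p'$ is $L$-separated; to see that $\Delta\eta'$ is the reflection, I would compare it with the genuine localization $\Delta p\to L_{X\times_Z X}(\Delta p)$ and conclude by uniqueness: one builds the groupoid quotient $\tilde\eta$ of the $L$-localized diagonal, which satisfies (2) by construction and is therefore an $L'$-localization by the direction already proven, and then uniqueness of $L'$-localizations identifies $\eta'\simeq\tilde\eta$ under $p$ and transports (2) back to $\eta'$.

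The hard part will be the higher coherence hidden inside the descent. Producing the level-one datum $X\times_{X'}X\to X\times_W X$ from the $L$-universal property is immediate, but promoting it to a genuine morphism of groupoid objects, equivalently to a point of the totalization of $\E_{/Z}(\check{C}(\eta')_\bullet,W)$, requires controlling all simplicial degrees; dually, assembling the $L$-localized diagonal into an actual groupoid object whose realization is $\tilde\eta$ is the technical heart of the converse. In both cases the decisive input is that the relevant targets are $L$-separated, which is exactly what should make the higher descent data contractible, and I expect this cosimplicial bookkeeping—rather than any single clever identity—to be the principal obstacle.
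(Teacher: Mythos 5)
First, a point of comparison: this paper does not actually prove \cref{thm:charoflprimeloc} --- it is imported verbatim from the companion paper \cite[Thm.~3.10]{l'loc} --- so there is no in-text proof to measure yours against. Judged on its own terms, your outline gets the architecture and the easier halves right. The deduction in (2)$\Rightarrow$(1) that $p'$ is $L$-separated, via the identification of $X\times_{X'}X\to X\times_Z X$ with $(\eta'\times_Z\eta')^{\ast}(\Delta p')$ and descent along the effective epimorphism $\eta'\times_Z\eta'$ using \cref{lm:pllbckalongepi}, is correct; so is the (effective epi, mono)-factorization argument showing that an $L'$-localization map must be an effective epimorphism, and so is the production of the level-one descent datum $X\times_{X'}X\to X\times_W X$ over $X\times_Z X$ from the universal property of $\Delta\eta'$ applied to the $L$-local kernel pair $(g\times_Z g)^{\ast}(\Delta q)$.

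However, the two steps you flag as ``the hard part'' are not coherence bookkeeping that can be waved at: they are the mathematical content of the theorem, and as written your proposal does not establish them. In (2)$\Rightarrow$(1) you must produce a point of $\lim_{\Delta}\E_{/Z}\bigl(\check{C}(\eta')_\bullet,W\bigr)$ lying over $g$, and the contractibility of the space of higher descent data has to be extracted from $L$-separatedness of $q$ by an actual argument --- for instance by proving that $\E_{/Z}(\eta',W)$ is $(-1)$-truncated (using that path objects of $W$ over $Z$ are $L$-local and that $\Delta\eta'$ is an $L$-equivalence) and separately that it is surjective on components; nothing in your text supplies this. More seriously, your (1)$\Rightarrow$(2) is circular in the absence of an independent construction: you reduce it to the existence of some $\tilde\eta$ satisfying (2), but assembling the $L$-localized \v{C}ech data into a groupoid object and realizing it as $\tilde\eta$ is precisely the existence theorem \cite[Thm.~4.3]{l'loc} that this characterization is meant to feed into, and it cannot be assumed here. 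So the proposal is a sensible plan with the two central constructions left open; it is not yet a proof.
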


\section{Consequences of the existence of $L'_\bullet$}
\label{sec:conseqofexistofl'}
We explore here a few interactions between $L'_\bullet$ and $L_\bullet$, and we discuss those reflective subfibrations for which $L_\bullet=L'_\bullet$.\par
In \cref{sec:furtherinterbetweenlandl'} we study a few consequences of the existence of $L'$-localizations. In particular, \cref{prop:l'forstablefactsyst} constructs new stable factorization systems from a given one, and shows how the theory of reflective subfibrations can be used to prove theorems that make no reference to it. We then prove that $L'_1$ is almost left exact (\cref{prop:spantol'locspanislequiv}), paralleling the equivalent statement in \cite[\S 2.4]{locinhott}.\par In \cref{sec:selfsepreflsubf}, we introduce \emph{self-separated} reflective subfibrations as those $L_\bullet$ for which $L$-separated maps are $L$-local. We show that every self-separated reflective subfibration is associated to a \emph{(quasi-)cotopological localization} of $\E$ (\cref{thm:l=l'char}). The content of this section does not appear in \cite{locinhott}.

\subsection{Further interactions between $L_\bullet$ and $L'_\bullet$}
\label{sec:furtherinterbetweenlandl'}
Let $L_{\bullet}$ be a reflective subfibration on $\E$. We gather here some results relating $L_\bullet$ and $L'_\bullet$. 

\smallskip
We begin by looking at $L'$-connected maps, which are linked to $L$-connected maps in the expected way.
\begin{proposition}
\label{prop:charofl'connmaps}
A map $p\colon E\rightarrow X$ is $L'$-connected if and only if it is an effective epimorphism and $\Delta p\colon E\rightarrow E\times_X E$ is $L$-connected.
\end{proposition}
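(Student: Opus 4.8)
The plan is to reduce the proposition to the characterization of $L'$-localization maps in \cref{thm:charoflprimeloc}, via the simple observation that, for any reflective subfibration, a map is connected exactly when it is its own localization map.

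First I would record an elementary reformulation valid for any reflective subfibration $M_\bullet$ on $\E$: for a map $g\colon A\to B$, the identity $\id_B$ is the terminal object of $\E_{/B}$ and hence $M$-local (as in the proof of \cref{lm:localmapsareclosedundercop}). By \cref{def:lconnmaps}, $g$ is $M$-connected precisely when its reflection map coincides with $g\colon g\to \id_B$; since localization maps are determined by their universal property, this is the same as saying that $g\colon g\to \id_B$ is \emph{an} $M$-localization map of $g$. Applying this with $M=L'$ gives that $p$ is $L'$-connected if and only if $p\colon p\to \id_X$ is an $L'$-localization map of $p$ (here $\id_X$ is $L$-local, hence $L$-separated by \cref{cor:localaresep}, hence $L'$-local). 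Applying it with $M=L$ gives that $\Delta p$ is $L$-connected if and only if $\Delta p\colon \Delta p\to \id_{E\times_X E}$ is an $L$-localization map of $\Delta p$.

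Next I would feed the triangle $\eta'=p$ into \cref{thm:charoflprimeloc}, taking the base to be $X$, the object to be localized to be $p\colon E\to X$, and the candidate localization to be $p'=\id_X$. The theorem then asserts that $p\colon p\to \id_X$ is an $L'$-localization map of $p$ if and only if $p$ is an effective epimorphism and the induced triangle
$$E \xrightarrow{\Delta p} E\times_X E \to E\times_X E$$
is an $L$-localization map of $\Delta p$. The crucial point is that, because in this instance the roles of $X'$ and $Z$ in \cref{thm:charoflprimeloc} are both played by $X$, the two fibre products $X\times_{X'}X$ and $X\times_Z X$ there both coincide with $E\times_X E$ and the comparison map between them is an equivalence; thus the displayed condition says exactly that $\Delta p\colon \Delta p\to \id_{E\times_X E}$ is an $L$-localization map of $\Delta p$.

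Combining the two reformulations with the theorem then yields the chain: $p$ is $L'$-connected $\iff$ $p$ is an effective epimorphism and $\Delta p\colon \Delta p\to \id_{E\times_X E}$ is an $L$-localization map of $\Delta p$ $\iff$ $p$ is an effective epimorphism and $\Delta p$ is $L$-connected, which is the claim. The only step requiring genuine care — and the main (minor) obstacle — is checking that this degenerate instance $X'=Z$ of \cref{thm:charoflprimeloc} really does collapse the comparison map $X\times_{X'}X\to X\times_Z X$ to an equivalence, so that ``being an $L$-localization map of $\Delta p$'' reduces cleanly to $L$-connectedness of $\Delta p$ rather than to a localization into some unrelated $L$-local object.
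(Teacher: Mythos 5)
Your proposal is correct and follows essentially the same route as the paper: the paper's proof likewise observes that, by \cref{def:lconnmaps}, $p$ is $L'$-connected iff $p\colon p\to\id_X$ is the $L'$-localization map of $p\in\E_{/X}$, and then invokes \cref{thm:charoflprimeloc}. Your extra care with the degenerate instance $X'=Z$ (so that $X\times_{X'}X\to X\times_Z X$ is an equivalence and the condition reduces to $L$-connectedness of $\Delta p$) is a detail the paper leaves implicit, and it checks out.
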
 
\begin{proof}
By \cref{def:lconnmaps}, $p$ is $L'$-connected if and only if $p\colon p\rightarrow~\id_X$ is the $L'$-localization map of $p\in\E_{/X}$. The claim now follows from \cref{thm:charoflprimeloc}. 
\end{proof}

We use this result to construct stable factorization systems from a given one. 
\begin{proposition}
\label{prop:l'forstablefactsyst}
Let $\ff=(\Ll,\Rr)$ be a stable factorization system on an $\infty$-topos $\E$. Let $\Ll'$ be the class of maps $f$ in $\E$ which are effective epimorphisms and such that $\Delta f\in\Ll$, and let $\Rr'$ be the class of maps $g$ in $\E$ such that $\Delta g\in\Rr$. Then $\ff'=(\Ll',\Rr')$ is a stable factorization system on $\E$.
\end{proposition}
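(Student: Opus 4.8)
The plan is to recognize $\ff'$ as the stable factorization system attached to the separated modality $L'_\bullet$, reducing everything to the dictionary of \cref{thm:stablefactsystandmod}. First I would apply \cref{thm:stablefactsystandmod}~(1) to convert $\ff=(\Ll,\Rr)$ into a modality $L_\bullet:=L^\ff_\bullet$ on $\E$ whose $L$-local maps are exactly the maps in $\Rr$. By \cref{cor:stabtomodandmodtostabbij}, the assignments $\ff\mapsto L^\ff_\bullet$ and $L_\bullet\mapsto\ff_L$ are mutually inverse, so $\ff_{L^\ff_\bullet}=\ff$; in particular the $L$-connected maps are precisely the maps in $\Ll$.

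Next, since $L_\bullet$ is a modality, \cite[Thm.~4.3 \& Cor.~4.4]{l'loc} provides a modality $L'_\bullet$ on $\E$ whose $L'$-local maps are exactly the $L$-separated maps. Applying \cref{thm:stablefactsystandmod}~(2) to $L'_\bullet$ produces a stable factorization system $\ff_{L'}$ whose left class is the class of $L'$-connected maps and whose right class is the class of $L'$-local maps. It then suffices to check that $\ff_{L'}=\ff'$, after which the conclusion is immediate, since $\ff_{L'}$ is stable by construction.

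To identify the right classes, recall from \cref{def:lseparatedmaps} that a map $g$ is $L$-separated exactly when $\Delta g$ is $L$-local, that is, exactly when $\Delta g\in\Rr$; hence the $L'$-local maps form precisely the class $\Rr'$. To identify the left classes, \cref{prop:charofl'connmaps} says that $p$ is $L'$-connected exactly when $p$ is an effective epimorphism and $\Delta p$ is $L$-connected, that is, when $\Delta p\in\Ll$; hence the $L'$-connected maps form precisely the class $\Ll'$. This gives $\ff'=\ff_{L'}$, as needed. The genuine difficulty has all been absorbed into the results being cited --- the correspondence of \cref{thm:stablefactsystandmod}, the characterization of $L'$-connected maps, and the existence of $L'_\bullet$ as a modality --- so the only remaining task is this bookkeeping. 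Were one to avoid $L'_\bullet$ and argue directly, the main obstacle would be constructing the $(\Ll',\Rr')$-factorization of an arbitrary map and verifying that $\Ll'$ is pullback-stable; routing through the modality $L'_\bullet$ is precisely what makes both of these automatic.
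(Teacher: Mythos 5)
Your proposal is correct and follows essentially the same route as the paper: pass from $\ff$ to the modality $L^\ff_\bullet$, invoke the existence of the modality $L'_\bullet$ of $L$-separated maps from the companion paper, extract the stable factorization system $\ff_{L'}$ via \cref{thm:stablefactsystandmod}, and identify its two classes with $\Ll'$ and $\Rr'$ using \cref{def:lseparatedmaps} and \cref{prop:charofl'connmaps}. The only difference is that your write-up makes the bookkeeping (in particular the appeal to \cref{cor:stabtomodandmodtostabbij} to identify the $L^\ff$-connected maps with $\Ll$) more explicit than the paper's terse version.
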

\begin{proof}
By \cref{thm:stablefactsystandmod}, there is a modality $L_\bullet=L^\ff_\bullet$ on $\E$ associated to $\ff$ and the modality $L'_\bullet$ of $L$-separated maps gives rise to a stable factorization system $\ff'=\ff_{L'}$. Since the $L^\ff_\bullet$-connected maps are the maps in $\Ll$, we conclude by \cref{prop:charofl'connmaps}.
\end{proof}

Recall from \cref{prop:existofsloc} the reflective subfibration $L^S_\bullet$ associated to a set $S$ of maps in $\E$. For these kinds of reflective subfibrations, the description of $L'_\bullet$ is particularly simple.

\begin{proposition}
\label{prop:l'forfloc}
Let $S$ be a set of maps in $\E$. Then $(L^S)'=L^{\Sigma S}$, where $\Sigma S$ is the set of maps given by the suspensions of the maps in $\E$.
\end{proposition}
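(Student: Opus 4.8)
The plan is to show that the reflective subfibration $(L^S)'$ produced by the companion paper \cite{l'loc} and the reflective subfibration $L^{\Sigma S}$ produced by \cref{prop:existofsloc} have the same local maps; since a reflective subfibration is determined by the reflective subcategories $\D_X\subseteq\E_{/X}$ it assigns, and each $\D_X$ is determined by its objects (the local maps with codomain $X$), this forces $(L^S)'=L^{\Sigma S}$. By \cref{def:lseparatedmaps}, a map $p\colon E\to X$ is $(L^S)'$-local exactly when $\Delta p$ is $L^S$-local, i.e.\ when $\Delta p$ is $f_i$-local for every $i\in I$; by \cref{prop:existofsloc} and \cref{def:flocalmap}, $p$ is $L^{\Sigma S}$-local exactly when $p^{(\Sigma f_i\times X)}$ is an equivalence for every $i$. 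Intersecting over $i$, the whole statement reduces to a single map: for each $f\colon A\to B$ and each $p\colon E\to X$, I must show that $\Delta p$ is $f$-local in $\E_{/E\times_X E}$ if and only if $p$ is $\Sigma f$-local.

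For this core lemma I would first compute $p^{(\Sigma f\times X)}$ explicitly. Because colimits in $\E$ are universal, the projection $\pr_{\Sigma A}\colon\Sigma A\times X\to X$ is the fiberwise suspension of $\pr_A$ in $\E_{/X}$; that is, $\pr_{\Sigma A}\simeq\id_X\sqcup_{\pr_A}\id_X$, the pushout in $\E_{/X}$ of the two terminal maps $\pr_A\to\id_X$, and $\Sigma f\times X$ is the induced map between such pushouts. Since the internal hom $p^{(-)}$ sends colimits in the exponent to limits, and $p^{\id_X}\simeq p$, applying it gives
\[
p^{(\Sigma f\times X)}\colon\ p\times_{p^{\pr_B}}p\ \longrightarrow\ p\times_{p^{\pr_A}}p,
\]
the map induced by $p^{(f\times X)}\colon p^{\pr_B}\to p^{\pr_A}$ together with the ``constant'' maps $p\to p^{\pr_A}$ and $p\to p^{\pr_B}$.

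The heart of the argument — and the step I expect to be the main obstacle — is to identify these pullbacks with internal homs of the diagonal over $B_0:=E\times_X E$. The two projections exhibit $p\times_{p^{\pr_A}}p$ as an object of $\E_{/B_0}$ via $(\E_{/X})_{/(p\times_X p)}\simeq\E_{/B_0}$ (see \cref{rmk:localcharacterofreflsubf}), and I claim that, so regarded, $p\times_{p^{\pr_A}}p\simeq(\Delta p)^{\pr_A}$ naturally in $A$. I would prove this by a Yoneda argument: for $S\in\E_{/B_0}$ with associated pair of ``points'' $s_1,s_2\colon S\to E$, both $\E_{/B_0}(S,\,p\times_{p^{\pr_A}}p)$ and $\E_{/B_0}(S,(\Delta p)^{\pr_A})\simeq\E_{/B_0}(A\times S,\Delta p)$ compute the space of homotopies between $s_1\circ\pr_S$ and $s_2\circ\pr_S$ in $\E_{/X}(A\times S,p)$. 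Under this identification $p^{(\Sigma f\times X)}$ becomes $(\Delta p)^{(f\times B_0)}$. Since the forgetful functors $\E_{/X}\to\E$ and $\E_{/B_0}\to\E$ are conservative, $p^{(\Sigma f\times X)}$ is an equivalence if and only if $(\Delta p)^{(f\times B_0)}$ is, i.e.\ if and only if $\Delta p$ is $f$-local; this is precisely the core lemma, completing the proof. The delicate points to get right are the naturality of the identification in $A$ (so that the comparison map is genuinely $(\Delta p)^{(f\times B_0)}$ and not merely abstractly equivalent to it) and the careful bookkeeping of the base change from $\E_{/X}$ to $\E_{/B_0}$.
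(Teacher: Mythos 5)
Your proposal is correct and follows essentially the same route as the paper: both arguments reduce to a single map $f\colon A\to B$ and hinge on the identity $p^{\pr_{\Sigma A}}\simeq p\times_{p^{\pr_A}}p\simeq(\Delta p)^{\pr_A}$ coming from $\Sigma A\simeq 1\sqcup_A 1$ and the fact that internal homs turn pushouts in the exponent into pullbacks. The only cosmetic differences are that the paper reduces to objects (deferring the case of maps to a slice topos) and establishes the key identification by an explicit pasting of pullback squares exhibiting $(\Delta X)^{\pr_A}$ as $X\times_{X^A}X\simeq X^{\Sigma A}$ over $X^2$, whereas you work with maps directly and verify the identification over $E\times_X E$ by a Yoneda computation of homotopies between the two projections.
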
 
\begin{proof}
We show that the $L^S$-separated maps are the $L^{\Sigma S}$-local maps. We prove this for objects, the proof for maps being essentially the same, upon replacing $\E$ with a slice $\infty$-topos $\E_{/Z}$. We can reduce to the case where $S$ consists of a single map $f\colon A\rightarrow B$. Let $X\in\E$. We want to show that $X^{\Sigma f}\colon X^{\Sigma B}\rightarrow X^{\Sigma A}$ is an equivalence if and only if $(\Delta X)^{f\times X^2}\colon\Delta X^{\pr_B}\rightarrow\Delta X^{\pr_A} $ is an equivalence, where $f\times X^2\colon \pr_A\rightarrow \pr_B$ is a map in $\E_{/X^2}$, for $\pr_A\colon A\times X^2\rightarrow X^2$ the projection map, and similarly for $\pr_B$. Note that, since $\Sigma A$ comes with basepoints $S,N\colon 1\rightarrow \Sigma A$, $X^{\Sigma A}$ is an object over $X^2$. Similarly, $X^{\Sigma f}$ is a map over $X^2$ and it is an equivalence as such if and only if it is an equivalence as a map in $\E$, since the forgetful functor $\E_{/X}\rightarrow \E$ is conservative. We now show that $X^{\Sigma f}$ is $(\Delta X)^{f\times X}$.\par
Let $[A\times X^2,X^2]$ be the domain of $(\Delta X)^{\pr_A}$, when seen as a map in $\E$. By \cite[Lemma 2.5.5]{goodwillie}, there is a pullback square
$$
\bfig
\node axx(0,0)[{[}A\times X^2,X^2{]}]
\node xa(600,0)[X^A]
\node xx(0,-250)[X^2]
\node xxa(600,-250)[(X^2)^A]

\arrow[axx`xa;]
\arrow|l|[axx`xx;(\Delta X)^{\pr_A}]
\arrow|r|[xa`xxa;(\Delta X)^A]
\arrow|b|[xx`xxa;]

\place(70,-90)[\angle]
\efig
$$ 
where the bottom map is induced by $A\rightarrow 1$. If we paste this square with 
$$
\bfig
\node xa(0,0)[X^A]
\node xa1(600,0)[X^A]
\node xxa(0,-250)[(X^2)^A]
\node xaxa(600,-250)[(X^A)^2]

\arrow|a|[xa`xa1;\id]
\arrow|l|[xa`xxa;(\Delta X)^A]
\arrow|r|[xa1`xaxa;\Delta(X^A)]
\arrow|b|[xxa`xaxa;\simeq]

\place(70,-90)[\angle]
\efig
$$
we get that $(\Delta X)^{\pr_A}$ is also the pullback of $\Delta(X^A)$ along $c^2\colon X^2\rightarrow (X^A)^2$, where $c\colon X\rightarrow X^A$ is induced by $A\rightarrow 1$. But $(c^2)^\ast(\Delta (X^A))=(X\times_{X^A}X\rightarrow X^2)$ and $X\times_{X^A}X\simeq X^{\Sigma A}$, because $\Sigma A=1\coprod_A 1$. Therefore, $(\Delta X)^{\pr_A}=(X^{\Sigma A}\rightarrow X^2)$. Similarly, $(\Delta X)^{\pr_B}=(X^{\Sigma B}\rightarrow X^2)$ and $(\Delta X)^{f\times X}$ is $X^{\Sigma f}$.
\end{proof}

Let $L^0_\bullet$ be the $0$-truncated modality, as in \cref{ex:ntruncfactsyst}.  Consider the circle $S^1$ in the $\infty$-topos $\E=\infty\Gpd$ and fix a point in it. Then $\Omega S^1$ is $0$-truncated. (Recall that, for $X\in\E$ and $x\colon 1\rightarrow X$ a global element of $X$, $\Omega (X,x):= 1\times_X 1$.)  Therefore, $L^0 (\Omega S^1)\simeq\Omega S^1$. On the other hand, $L^0(S^1)$  is a point, because $S^1$ is $0$-connected. This simple observation shows that, in general, $L$-localization does not commute with taking loop objects. $L'$-localization can be used to fix this misbehaviour.
\begin{corollary}
\label{cor:locandloop}
Let $L_\bullet$ be a reflective subfibration on $\E$. Let $X\in\E$ be an object of $\E$ with a global element $x\colon 1\rightarrow X$. Set $\Omega X:=\Omega(X,x)$. Then $L(\Omega X)\simeq\Omega (L'X)$, where the loop object of $L'X$ is taken with respect to the global element $1\xrightarrow{x}X\xrightarrow{\eta'(X)}~L'X$. 
\end{corollary}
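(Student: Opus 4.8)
The plan is to realize both $L(\Omega X)$ and $\Omega(L'X)$ as pullbacks, along the point $(x,x)\colon 1\to X\times X$, of two objects of $\E_{/X\times X}$ that are related by $L$-localization, and then to invoke the reflective-subfibration compatibility of \cref{def:refsub}~(2).

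First I would apply \cref{thm:charoflprimeloc} to the object $p=(X\to 1)\in\E_{/1}$ together with its $L'$-localization map $\eta'(X)\colon X\to L'X$. Since $\eta'(X)$ is by definition an $L'$-localization map, the theorem tells us that the induced factorization $X\xrightarrow{\Delta\eta'(X)} X\times_{L'X}X\to X\times X$ is an $L$-localization map of $\Delta X\colon X\to X\times X$ in $\E_{/X\times X}$. In particular, reading off the \emph{codomain} of this factorization (rather than its domain), the structure map $X\times_{L'X}X\to X\times X$ is equivalent to $L_{X\times X}(\Delta X)$.

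Next I would identify the two homotopy fibers over $(x,x)$. Pulling $\Delta X$ back along $(x,x)\colon 1\to X\times X$ yields the fiber of the diagonal, which is $\Omega(X,x)=\Omega X$. Pulling $X\times_{L'X}X\to X\times X$ back along $(x,x)$ yields, by the pasting lemma for pullbacks, the object $1\times_{L'X\times L'X}L'X$ taken at the point $(y,y)$ with $y=\eta'(X)\circ x$; that is, the fiber of $\Delta(L'X)$ over $(y,y)$, namely $\Omega(L'X)$ with exactly the prescribed basepoint. Thus $(x,x)^\ast(\Delta X)\simeq\Omega X$ and $(x,x)^\ast\bigl(L_{X\times X}(\Delta X)\bigr)\simeq\Omega(L'X)$.

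Finally, I would apply the reflective-subfibration property to $g=(x,x)\colon 1\to X\times X$ and $\Delta X\in\E_{/X\times X}$, obtaining a natural equivalence $L_1\bigl(g^\ast(\Delta X)\bigr)\simeq g^\ast\bigl(L_{X\times X}(\Delta X)\bigr)$. Combining this with the two fiber identifications above gives $L(\Omega X)\simeq\Omega(L'X)$, as desired. The main obstacle is purely bookkeeping: one must keep straight which pullback functor is in play, correctly extract from \cref{thm:charoflprimeloc} that it is the codomain of the diagonal factorization that computes $L_{X\times X}(\Delta X)$, and check that both pullbacks along $(x,x)$ genuinely recover the loop objects at the intended basepoints — all routine once the relevant pasting diagrams are in place.
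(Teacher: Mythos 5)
Your proposal is correct and follows essentially the same route as the paper's proof: identify $\Omega X$ as $(x,x)^\ast(\Delta X)$, use \cref{thm:charoflprimeloc} to recognize $X\times_{L'X}X\to X\times X$ as $L_{X\times X}(\Delta X)$, and invoke the pullback-compatibility of $L_\bullet$ together with the pasting lemma to identify the fiber over $(x,x)$ with $\Omega(L'X)$ at the basepoint $\eta'(X)\circ x$. The only difference is expository: you spell out the pasting-lemma bookkeeping that the paper leaves implicit.
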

\begin{proof}
Since $\Omega X$ is the pullback along $(x,x)\colon 1\rightarrow X^2$ of $\Delta X$, the localization $L(\Omega X)$ is the pullback along $(x,x)$ of the $L$-localization of $\Delta X$ in $\E_{/X^2}$. By \cref{thm:charoflprimeloc}, the $L$-localization of $\Delta X$ is the map $X\times_{L'X}X\rightarrow X^2$, which can be obtained as the pullback of $\Delta (L'X)$ along $(\eta'(X))^2$. The claim then follows. 
\end{proof}

The following result relates the pullback of a cospan of objects in $\E$ to the pullback of the $L'$-localized span, and will be used in the next section.

\begin{proposition}[{\cite[Prop.~ 2.28]{locinhott}}]
\label{prop:spantol'locspanislequiv}
Let $Y\xrightarrow{g} X\xleftarrow{f} Z$ be maps in $\E$ and let $L'Y\xrightarrow{L'g} L'X\xleftarrow{L'f} L'Z$ be the associated cospan of $L'$-local objects. Then, the natural map $\psi\colon P\rightarrow Q$ induced on pullbacks is an $L_1$-equivalence.
\end{proposition}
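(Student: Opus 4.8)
The plan is to factor $\psi$ as a composite of three maps, each localizing a single vertex of the cospan, and to check that every factor lies in $S_1$; since $S_1=\prescript{\perp}{}{\D}$ is closed under composition, this will finish the proof. Concretely I would write
\[
P=Y\times_X Z\xrightarrow{\psi_1}Y\times_{L'X}Z\xrightarrow{\psi_2}L'Y\times_{L'X}Z\xrightarrow{\psi_3}L'Y\times_{L'X}L'Z=Q,
\]
where $\psi_1$ localizes the apex $X$ and $\psi_2,\psi_3$ localize the legs $Y,Z$; the naturality squares for $\eta'$ turn $\eta'(Y),\eta'(Z)$ into maps over $L'X$, so these factorizations make sense and compose to $\psi$.

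For the apex I would realize $\psi_1$ as a pullback of a diagonal. Using $Y\times_X Z\simeq(Y\times Z)\times_{X\times X}X$ and $Y\times_{L'X}Z\simeq(Y\times Z)\times_{X\times X}(X\times_{L'X}X)$, the map $\psi_1$ is $\Sigma_{Y\times Z}$ applied to $(g\times f)^\ast(\delta)$, where $\delta\colon X\to X\times_{L'X}X$ is $\Delta\eta'(X)$ regarded as a map in $\E_{/X\times X}$. By \cref{thm:charoflprimeloc}, $\delta=\Delta\eta'(X)$ is an $L$-localization map of $\Delta_X$, hence an $L_{X\times X}$-equivalence. Pulling back preserves $L$-equivalences (\cref{lm:lequivandpullbdepsum}(i)), so $(g\times f)^\ast(\delta)\in S_{Y\times Z}$, and then $\psi_1=\Sigma_{Y\times Z}\big((g\times f)^\ast\delta\big)\in S_1$ by \cref{lm:lequivandpullbdepsum}(ii).

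For the legs the key sub-lemma I would isolate is that \emph{for every $W\in\E$ the unit $\eta'(W)\colon W\to L'W$ is an $L$-connected map}. Granting this, each of $\psi_2,\psi_3$ is a pullback of such a unit (for instance $\psi_2$ is the pullback of $\eta'(Y)$ along $L'Y\times_{L'X}Z\to L'Y$), hence $L$-connected by \cref{rmk:lconnmapsclosedunderpullbacks}, and therefore lies in $S_1$ by \cref{def:lconnmaps}. To prove the sub-lemma I would argue as follows. By \cref{thm:charoflprimeloc}, $\eta'(W)$ is an effective epimorphism and $\Delta\eta'(W)\colon W\to W\times_{L'W}W$ is an $L$-localization map of $\Delta_W$, so it is an $L_{W\times W}$-equivalence. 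Pushing forward along the second projection $W\times W\to W$ via \cref{lm:lequivandpullbdepsum}(ii), it becomes an $L_W$-equivalence; concretely it is the canonical section $\id_W\to(W\times_{L'W}W\to W)$ in $\E_{/W}$, which is split by the projection $W\times_{L'W}W\to W$, so two-out-of-three shows that the projection $W\times_{L'W}W\to W$ is $L_W$-connected. Finally, this projection is $\eta'(W)^\ast$ of the object $(W\xrightarrow{\eta'(W)}L'W)\in\E_{/L'W}$, so by the reflective subfibration property of $L_\bullet$ (\cref{def:refsub}(2)) its $L_{L'W}$-reflection pulls back to $\id_W$; since pullback along the effective epimorphism $\eta'(W)$ is conservative (descent), the $L_{L'W}$-reflection of $(W\xrightarrow{\eta'(W)}L'W)$ is $\id_{L'W}$, i.e.\ $\eta'(W)$ is $L$-connected.

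I expect the sub-lemma that $\eta'$ is $L$-connected to be the main obstacle. Since $L_\bullet$ need not be a modality, neither the unit-connectedness of \cref{lm:unitlconnforamodality} nor any closure of $L$-separated maps under composition is available, so the connectedness of $\eta'(W)$ cannot be read off directly; instead one must extract it from the diagonal characterization in \cref{thm:charoflprimeloc}, trading a statement about $\eta'(W)$ for one about its diagonal and then descending along the effective epimorphism $\eta'(W)$. By contrast, the apex step is a routine pullback-and-pushforward computation once the comparison map is recognized as a pullback of $\Delta\eta'(X)$, and the final assembly is immediate from closure of $S_1$ under composition.
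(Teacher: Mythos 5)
Your proof is correct, and its skeleton is the same as the paper's: first localize the apex by pulling back the $L$-localization of the diagonal $\Delta X$ (identified via \cref{thm:charoflprimeloc} with $X\to X\times_{L'X}X$), landing in the intermediate object $Y\times_{L'X}Z$, which is exactly the paper's $(g\times f)^\ast R$; then observe that the remaining map to $Q$ is a pullback of units $\eta'$, hence $L$-connected, hence an $L_1$-equivalence. The only structural difference is that you localize the two legs one at a time where the paper does both at once via $\eta'(Y)\times\eta'(Z)$, which is immaterial. The one genuine point of divergence is your sub-lemma that $\eta'(W)$ is $L$-connected: the paper simply imports this from the companion paper (\cite[Lemma~3.4]{l'loc}), whereas you derive it in place, and your derivation is sound --- $\Sigma_{\pr_2}$ of the $L_{W^2}$-equivalence $\Delta\eta'(W)$ is a section of the projection $W\times_{L'W}W\to W$ in $\E_{/W}$, two-out-of-three in $S_W$ makes that projection $L$-connected, the pullback-compatibility of \cref{def:refsub}~(2) identifies it with $\eta'(W)^\ast\bigl(L_{L'W}(\eta'(W))\bigr)$, and descent along the effective epimorphism $\eta'(W)$ (as in \cref{lm:pllbckalongepi}) forces $L_{L'W}(\eta'(W))\simeq\id_{L'W}$. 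This makes your write-up self-contained where the paper is not, at the cost of a slightly longer argument; you correctly identify that \cref{lm:unitlconnforamodality} is unavailable here since $L_\bullet$ need not be a modality.
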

\begin{proof}
The situation can be described by the diagram
$$
\bfig
\node yx(0,0)[Y\times X]
\node zz(800,0)[Z^2]
\node l'z2(1100,-300)[L'Z^2]
\node p(0,600)[P]
\node q(300,400)[Q]
\node z(800,600)[Z]
\node l'z(1100,400)[L'Z]
\node l'yl'x(300,-300)[L'Y\times L'X]

\arrow[q`l'z;]
\arrow||/@{->}_<>(0.4){q}/[q`l'yl'x;]
\arrow[p`z;]
\arrow|r|[z`l'z;\eta'(Z)]
\arrow|b|/{ -->}/[p`q;\psi]
\arrow|m|[zz`l'z2;\eta'(Z)^2]
\arrow|r|[l'z`l'z2;\Delta(L'Z)]
\arrow||/@{->}_<>(0.2){\eta'(Y)\times\eta'(X)}/[yx`l'yl'x;]
\arrow|b|[l'yl'x`l'z2;L'g\times L'f]
\arrow||/@{->}|!{(300,400);(300,-300)}\hole|(0.6){g\times f}/[yx`zz;]
\arrow|l|[p`yx;p]
\arrow||/@{->}|!{(300,400);(1100,400)}\hole|(0.6){\Delta Z}/[z`zz;]
\efig
$$
where the front and back squares are pullbacks. If we let $\eta \colon\Delta Z\rightarrow r_Z$ be the $L$-reflection map of $\Delta Z$ into $\D_{Z^2}$, we can expand the back and the right faces above as in the following diagram
$$
\bfig
\node p(0,0)[P]
\node z(900,0)[Z]
\node gfr(-500,-300)[(g\times f)^\ast R_Z]
\node r(400,-300)[R_Z]
\node zlzz(1400,-300)[Z\times_{L'Z}Z]
\node lz(2200,-300)[L'Z]
\node yx(0,-600)[Y\times X]
\node zz(900,-600)[Z^2]
\node lzlz(1700,-600)[L'Z^2]

\arrow[p`z;]
\arrow|m|[gfr`r;]
\arrow||/@{->}|!{(-500,-300);(400,-300)}\hole_<>(.3)p/[p`yx;]
\arrow|b|[yx`zz;g\times f]
\arrow/{@{->}@/^20pt/}/[z`lz;\eta'(Z)]
\arrow|l|[p`gfr;\bar{\eta}]
\arrow|m|[gfr`yx;(g\times f)^\ast r_Z]

\arrow|l|[z`r;\eta]
\arrow|r|[z`zlzz;\Delta(\eta'(Z))]
\arrow|l|[r`zz;r_Z]
\arrow[zlzz`zz;]
\arrow||/@{->}_<>(0.2){\Delta Z}/[z`zz;]
\arrow||/@{-->}|!{(900,0);(900,-600)}\hole^<>(.8){\exists !\phi}_<>(.8)\simeq/[r`zlzz;]
\arrow[zlzz`lz;]
\arrow|r|[lz`lzlz;\Delta(L'Z)]
\arrow|b|[zz`lzlz;\eta'(Z)^2]
\efig
$$
where the equivalence $\phi$ is given by \cref{thm:charoflprimeloc} (applied on the base $\infty$-topos $\E$). Note that $\li{\eta}$, being the reflection map of $p$ into $\D_{Y\times X}$, is an $(L_{Y\times X})$-equivalence. The bottom half of the diagram above gives a composite pullback square. So we see that $(g\times f)^\ast(r_z)$ is the pullback of $\Delta(L'Z)$ along $(\eta'(Z)^2)(g\times f)=(L'g\times L'f)(\eta'(Y)\times\eta'(X)).$ Therefore, the composite pullback square factors through $q$ as
$$
\bfig
\node gfr(0,0)[(g\times f)^\ast (R_Z)]
\node q(900,0)[Q]
\node lz(1800,0)[L'Z]
\node yx(0,-350)[Y\times X]
\node lylx(900,-350)[L'Y\times L'X]
\node lzlz(1800,-350)[L'Z^2]

\arrow[gfr`q;t]
\arrow[q`lz;]
\arrow|l|[gfr`yx;(g\times f)^\ast(r_Z)]
\arrow|m|[q`lylx;q]
\arrow|r|[lz`lzlz;\Delta(L'Z)]
\arrow|b|[yx`lylx;\eta'(Y)\times\eta'(X)]
\arrow|b|[lylx`lzlz;L'g\times L'f]
\efig
$$
since the right square is a pullback by definition. Therefore, the left square is also a pullback and the map $t$ is $L$-connected, because it is a pullback of the $L$-connected map $\eta'(Y)\times\eta'(X)$ (see \cite[Lemma 3.4]{l'loc}). It follows from these considerations that the map $\psi\colon P\rightarrow Q$ is given by the composite $t\li{\eta}$, where both $t$ and $\li{\eta}$ are $L_1$-equivalences (see also  \cref{lm:lequivandpullbdepsum} and \cref{def:lconnmaps}). Hence, $\psi$ is also an $L_1$-equivalence.
\end{proof}

\subsection{Self-separated reflective subfibrations}
\label{sec:selfsepreflsubf}
We study here the reflective subfibrations $L_\bullet$ on $\E$ for which $L_\bullet =L'_\bullet$ and show that they correspond to some left exact reflective subcategories of $\E$, the \emph{quasi-cotopological localizations} of $\E$. 
\begin{definition}
A reflective subfibration $L_\bullet$ on an $\infty$-topos $\E$ is \emph{self-separated} if every $L$-separated map is $L$-local.
\end{definition}
The existence of self-separated reflective subfibrations on $\E$ is related to a property of an $\infty$-topos called \emph{hypercompleteness}, which is discussed in detail in \cite[\S 6.5.2]{htt}. We gather here the main aspects of hypercompleteness that we need.

\begin{definition}
\label{def:hypercomplete}
Let $\E$ be an $\infty$-topos.
\begin{enumerate}
\item A map $f$ in $\E$ is called $\infty$\emph{-connected} if it is $n$-connected, for every $n\geq (-2)$. In particular, equivalences are $\infty$-connected.
\item An object $X$ in $\E$ is called \emph{hypercomplete} if, for every $\infty$-connected map $f\colon A\rightarrow B$, $X^f\colon X^B\rightarrow X^A$ is an equivalence in $\E$ or, equivalently,\break $\E(f,X)\colon \E(B,X)\rightarrow\E(A,X)$ is an equivalence in $\infty\Gpd$. In particular, $n$-truncated objects are hypercomplete. A map $p\colon E\rightarrow X$ is \emph{hypercomplete} if it is a hypercomplete object of $\E_{/X}$.

\item $\E$ is a \emph{hypercomplete} $\infty$-topos if every object in $\E$ is hypercomplete. Equivalently, $\E$ is hypercomplete if every $\infty$-connected map in $\E$ is an equivalence.  
\end{enumerate}
\end{definition}
Not every $\infty$-topos is hypercomplete. However, we have the following result.

\begin{proposition}
\label{prop:factsabouthypercompletion}
Let $\E$ be an $\infty$-topos and let $\E^{\wedge}$ be the full subcategory of $\E$ spanned by the hypercomplete objects. 
\begin{enumerate}
\item $\E^{\wedge}$ is an accessible, left exact, reflective subcategory of $\E$. In particular, it is an $\infty$-topos. As such, it is hypercomplete.
\item A map in $\E$ is hypercomplete if and only if it is right orthogonal to every $\infty$-connected map. For $X\in\E$, a map $\alpha$ in the $\infty$-topos $\E_{/X}$ is $\infty$-connected if and only if $\Sigma_X(\alpha)$ is $\infty$-connected in $\E$.  
\item There exists a modality $L^{\wedge}_\bullet$ on $\E$ for which the $L^{\wedge}$-equivalences are the $\infty$-connected maps, and the $L^{\wedge}$-local maps are the hypercomplete maps. We call this modality the \emph{hypercompletion modality} on $\E$. 
\end{enumerate}
\end{proposition}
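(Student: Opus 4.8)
The plan is to handle the three parts in order: part (1) essentially by citation, part (2) by a factorization-system argument on slices together with a fiber computation, and part (3) by feeding the outcome of (1) into \cref{prop:leftexactreflsubmod}. For (1) I would cite \cite[\S6.5.2]{htt}: there it is shown that the hypercomplete objects form an accessible, left exact, reflective localization $\E^{\wedge}$ of $\E$, that $\E^{\wedge}$ is again an $\infty$-topos and is itself hypercomplete, and that $\E^{\wedge}$ is realized as the localization of $\E$ at the $\infty$-connected morphisms. There is nothing to add beyond recording this reference and the identification of $\E^{\wedge}$ as localization at the $\infty$-connected maps, which parts (2) and (3) will use.

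For part (2) I would prove the second assertion first, since the first follows from it. Fix $X\in\E$ and a natural number $n$. By \cref{ex:ntruncfactsyst} the pair of $n$-connected and $n$-truncated maps is a stable factorization system $\ff$ on $\E$, so by \cref{lemma:factsystonslices} the pair $\ff_X=(\Ll_X,\Rr_X)$ is a factorization system on $\E_{/X}$, where $\Ll_X$ (resp.\ $\Rr_X$) consists of the maps $\alpha$ with $\Sigma_X(\alpha)$ $n$-connected (resp.\ $n$-truncated) in $\E$. Since $n$-truncatedness of a morphism is built from pullbacks and diagonals, and the forgetful functor $\Sigma_X\colon\E_{/X}\to\E$ preserves and reflects pullbacks and reflects equivalences, a morphism is $n$-truncated in the topos $\E_{/X}$ if and only if its image is $n$-truncated in $\E$; hence $\Rr_X$ is exactly the class of $n$-truncated morphisms of $\E_{/X}$. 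As a factorization system is determined by its right class, $\Ll_X$ must be the class of $n$-connected morphisms of the topos $\E_{/X}$. Thus $\alpha$ is $n$-connected in $\E_{/X}$ if and only if $\Sigma_X(\alpha)$ is $n$-connected in $\E$, and intersecting over all $n$ gives the assertion for $\infty$-connected maps.

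Given this, the first assertion of (2) is a fiberwise translation. Unwinding \cref{def:hypercomplete}, a map $p\colon E\to X$ is hypercomplete precisely when $(\E_{/X})(\alpha,p)$ is an equivalence for every $\infty$-connected $\alpha$ in $\E_{/X}$. For an $\infty$-connected map $f\colon A\to B$ in $\E$ and a map $k\colon B\to X$, the fiber over $k$ of the comparison square testing $f\perp p$ is the map $(\E_{/X})(\li{f},p)$, where $\li{f}$ is $f$ regarded as a morphism $kf\to k$ in $\E_{/X}$; since $\Sigma_X(\li{f})=f$, the second assertion makes $\li{f}$ $\infty$-connected in $\E_{/X}$, so this fiber map is an equivalence exactly when $p$ is hypercomplete. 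Conversely, any $\infty$-connected $\alpha$ in $\E_{/X}$ has $\Sigma_X(\alpha)$ $\infty$-connected in $\E$, and $(\E_{/X})(\alpha,p)$ is recovered as the map on fibers of the pullback square witnessing $\Sigma_X(\alpha)\perp p$. Hence hypercompleteness of $p$ is equivalent to $p$ being right orthogonal to every $\infty$-connected map of $\E$.

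For part (3) I would apply \cref{prop:leftexactreflsubmod} to the left exact reflective subcategory $\E^{\wedge}$ of part (1), with reflector the hypercompletion functor; this yields a modality $L^{\wedge}_\bullet$ whose restriction $L^{\wedge}_1$ is hypercompletion, and whose associated stable factorization system has the $L^{\wedge}_1$-equivalences on the left and their right orthogonal complement on the right. By part (2) the hypercomplete maps are exactly the maps right orthogonal to every $\infty$-connected map, so it only remains to identify the $L^{\wedge}$-equivalences with the $\infty$-connected maps; for this I would invoke the characterization from \cite[\S6.5.2]{htt} that hypercompletion inverts exactly the $\infty$-connected morphisms. Matching the two classes then produces the stated modality. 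I expect the main obstacle to be the second assertion of part (2): correctly transporting the $n$-connected/$n$-truncated factorization system to the slice and recognizing $\Ll_X$ and $\Rr_X$ as the intrinsic connected and truncated morphisms of $\E_{/X}$. Once that is secured, the orthogonality reformulation in (2) and the identification of classes in (3) are formal, modulo the input from \cite{htt} that hypercompletion is precisely localization at $\infty$-connected maps.
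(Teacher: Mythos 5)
Your proposal is correct, and for parts (1) and (3) it follows exactly the paper's route: part (1) is handled by citation to \cite[\S 6.5.2]{htt} (the paper cites Prop.~6.5.2.8 and Lemma~6.5.2.12 specifically), and part (3) is obtained by feeding the left exact localization $\E\to\E^{\wedge}$ into \cref{prop:leftexactreflsubmod} and matching the two classes of maps, just as the paper does. The only divergence is part (2): the paper disposes of it with a single citation to \cite[Rmk.~6.5.2.21]{htt}, whereas you supply an actual argument --- transporting the $n$-connected/$n$-truncated stable factorization system to $\E_{/X}$ via \cref{lemma:factsystonslices}, observing that $\Sigma_X$ preserves and reflects $n$-truncatedness so that the right class of $\ff_X$ is intrinsic to the slice topos, concluding that the left class must likewise be the intrinsic $n$-connected maps, and then deriving the orthogonality characterization of hypercomplete maps by the fiberwise computation of mapping spaces that this paper uses elsewhere (e.g.\ in \cref{prop:weakcompproplocmap}). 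That argument is sound and self-contained; it buys you independence from one external remark at the cost of a page of routine verification, while the paper's version is purely bibliographic. One small point to keep explicit in part (3): the identification of the $L^{\wedge}$-equivalences with the $\infty$-connected maps does need the converse direction (every map inverted by hypercompletion is $\infty$-connected), which you correctly flag as an input from \cite{htt} rather than something formal.
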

\begin{proof}
The first part follows from \cite[Prop.~6.5.2.8]{htt} (see the discussion right after it) and from \cite[Lemma 6.5.2.12]{htt}. The second part is \cite[Rmk.~6.5.2.21]{htt}. For the last part, we can apply the results of \cref{prop:leftexactreflsubmod} to the left adjoint $\E\rightarrow\E^{\wedge}$ of the inclusion $\E^{\wedge}\subseteq ~\E$. In this way, we obtain a modality $L^{\wedge}_\bullet$ on $\E$ with the desired $L^{\wedge}$-equivalences and $L^{\wedge}$-local maps.
\end{proof}
We are now ready to study self-separated reflective subfibrations.
\begin{lemma}
\label{lemma:selfseplocarehyper}
Let $L_\bullet$ be a self-separated reflective subfibration on $\E$. Then every $L$-equivalence is an $\infty$-connected map and every hypercomplete map is $L$-local. In particular, if $\E$ is hypercomplete, $L_\bullet$ is the trivial reflective subfibration for which the $L$-equivalences are exactly the equivalences in $\E$, and every map is $L$-local.
\end{lemma}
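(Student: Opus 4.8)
The plan is to reduce the whole statement to the first assertion --- that every $L$-equivalence (i.e.\ every $\alpha\in S_1$, and more generally every $\alpha\in S_X$) is $\infty$-connected --- and to prove that assertion by first establishing that \emph{every truncated map is $L$-local}. The essential point to keep in mind is that membership in $S_1=\prescript{\perp}{}{\D_1}$ only provides orthogonality to $L$-local \emph{objects}, not to all $L$-local \emph{maps}; so I cannot simply feed truncated maps into an orthogonality computation against $f$. Instead I will bridge to $\infty$-connectedness through the truncation functors $\tau_{\le n}$.

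First I would prove, by induction on $n\geq -2$, that every $n$-truncated map is $L$-local. For $n=-2$ a $(-2)$-truncated map is an equivalence, hence lies in $\D_X$, since $\D_X$ contains the terminal object $\id_X$ and is closed under equivalences. For the inductive step, if $p$ is $n$-truncated then $\Delta p$ is $(n-1)$-truncated, so $\Delta p$ is $L$-local by the inductive hypothesis; thus $p$ is $L$-separated by \cref{def:lseparatedmaps}, and therefore $L$-local because $L_\bullet$ is self-separated (compare \cref{rmk:llocarandmonoarelsep}, which records the $(-1)$-truncated case). In particular every truncated object is an $L$-local object, so $\D_1$ contains all truncated objects and hence $S_1=\prescript{\perp}{}{\D_1}$ is contained in the class of maps that are left orthogonal to every truncated object.

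Now if $f\in S_1$, then $\E(f,Y)$ is an equivalence for every truncated object $Y$, which is to say that $\tau_{\le n}(f)$ is an equivalence for every $n$. Passing to homotopy sheaves (see \cite[\S 6.5.1]{htt}), a map inverted by every $\tau_{\le n}$ has fibres with vanishing homotopy sheaves, hence is $n$-connected for all $n$, i.e.\ $\infty$-connected; this proves the first assertion. For the statement about hypercomplete maps, let $p\colon E\to X$ be hypercomplete and let $\alpha\in S_X$. By \cref{lm:lequivandpullbdepsum}~(ii) the map $\Sigma_X(\alpha)$ lies in $S_1$, so it is $\infty$-connected by what was just proved, and then $\alpha$ is $\infty$-connected in $\E_{/X}$ by \cref{prop:factsabouthypercompletion}~(2) (this also shows the first assertion for every $L_X$-equivalence). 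Since $p$ is a hypercomplete object of $\E_{/X}$, the same proposition gives $\alpha\perp_X p$; as $\alpha\in S_X$ was arbitrary, \cref{prop:propofreflsub}~(i) yields $p\in\D_X$, i.e.\ $p$ is $L$-local.

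Finally, if $\E$ is hypercomplete then $\infty$-connected maps are equivalences, so the first assertion forces every $L$-equivalence to be an equivalence. Applied to the reflection map $\eta_X(p)\in S_X$ of an arbitrary $p\in\E_{/X}$, this gives, via \cref{lm:lequivandpullbdepsum}~(ii), that $\Sigma_X\eta_X(p)\in S_1$ is an equivalence; since the forgetful functor $\Sigma_X\colon\E_{/X}\to\E$ is conservative, $\eta_X(p)$ is an equivalence, whence $p\simeq L_X(p)\in\D_X$ and $L_\bullet$ is trivial. The main obstacle is the identification in the third paragraph of the maps inverted by all truncations with the $\infty$-connected maps: this is standard but genuinely requires homotopy sheaves rather than convergence of Postnikov towers (which need not hold in a general $\infty$-topos), and it is precisely the step that lets one pass from orthogonality against $L$-local objects to the desired conclusion about $L$-equivalences.
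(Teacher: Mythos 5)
Your proof is correct and follows essentially the same route as the paper's: truncated maps are shown to be $L$-local by induction on diagonals using self-separatedness, $L$-equivalences are then $\infty$-connected because they are left orthogonal to all truncated objects, and hypercomplete maps are right orthogonal to $\infty$-connected maps, hence $L$-local by \cref{prop:propofreflsub}~(i). The only difference is that you make explicit, via homotopy sheaves, why a map inverted by every $\tau_{\le n}$ is $\infty$-connected --- a point the paper's proof asserts in a single sentence --- and your added care there (including the observation that $S_X$ a priori only gives orthogonality against $L$-local \emph{objects}) is warranted.
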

\begin{proof}
Since $L_\bullet$ is self-separated, for every map $p$ in $\E$, if $\Delta p$ is $L$-local, then $p$ is $L$-local. Since equivalences are $L$-local, this implies that every monomorphism is $L$-local. It follows that every $n$-truncated map is $L$-local, due to the recursive characterization of $n$-truncated maps in terms of their diagonals (see \cite[Lemma~5.5.6.15]{htt}). Since, for every $X\in\E$, $L_X$-equivalences are left orthogonal to every map in $\D_{X}$ (see \cref{not:stuffaboutreflsub}), we get that every $L$-equivalence is $\infty$-connected. A hypercomplete map $p\in\E_{/X}$ is local with respect to \emph{all} $\infty$-connected map in $\E_{/X}$, so every hypercomplete map is $L$-local.
\end{proof}
When $\E$ is not hypercomplete, we can find non-trivial examples of self-separated reflective subfibrations.

\begin{definition}
\label{def:cotoploc}
Suppose $i\colon\D\hookrightarrow \E$ is a reflective subcategory of $\E$, with reflector $a\colon \E\rightarrow \D$. We say that $L:=ia$ is a \emph{quasi-cotopological localization} of $\E$ if it is left exact and, for every map $f$ in $\E$, if $Lf$ is an equivalence, then $f$ is $\infty$-connected.
\end{definition}
Note that the hypercompletion $L^{\wedge}\colon\E\rightarrow \E^{\wedge}$ is a quasi-cotopological localization.
\begin{remark}
In \cite[Def.~6.5.2.17]{htt}, Lurie calls a localization \emph{cotopological} if it is quasi-cotopological and accessible. 
\end{remark}
\begin{proposition}
\label{prop:cotlocareselfsep}
Let $L_\bullet$ be the modality associated to a quasi-cotopological localization $L\colon \E\rightarrow \E$ (see \cref{prop:leftexactreflsubmod}). Then $L_\bullet$ is self-separated.
\end{proposition}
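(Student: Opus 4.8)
The plan is to reduce everything to the explicit pullback characterization \eqref{eq:conditionforleftexactloc} of $L$-local maps. Fix an $L$-separated map $p\colon E\to X$ and form the naturality square of $\eta$ at $p$. Writing $Q:=X\times_{LX}LE$ for the pullback of $Lp$ along $\eta(X)$, we obtain the gap map $g:=(p,\eta(E))\colon E\to Q$ and the projection $\pi\colon Q\to X$ with $\pi g=p$. By \eqref{eq:conditionforleftexactloc}, the map $p$ is $L$-local precisely when $g$ is an equivalence. Thus the proposition reduces to proving that $g$ is an equivalence, and I would do this by showing that $g$ is simultaneously an effective epimorphism and a monomorphism, since such a map is automatically an equivalence in the $\infty$-topos $\E$.

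First I would check that $g$ is an effective epimorphism. Because $L$ is left exact, applying $L$ to $Q=X\times_{LX}LE$ gives $LQ\simeq LX\times_{LX}LE\simeq LE$, and under this identification $Lg\colon LE\to LQ$ is the identity; hence $Lg$ is an equivalence. Since $L$ is a \emph{quasi-cotopological} localization (\cref{def:cotoploc}), any map whose image under $L$ is an equivalence is $\infty$-connected (\cref{def:hypercomplete}). In particular $g$ is $(-1)$-connected, i.e.\ an effective epimorphism.

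The heart of the argument is showing that $g$ is a monomorphism, i.e.\ that $\Delta g\colon E\to E\times_Q E$ is an equivalence, and this is where the separatedness hypothesis enters. The key step is the natural identification
$$
E\times_Q E\;\simeq\;(E\times_X E)\times_{L(E\times_X E)}LE,
$$
where the right-hand pullback is formed along $\eta(E\times_X E)$ and $L(\Delta p)$. Concretely, a point of $E\times_Q E$ is a pair $(e_1,e_2)$ with $p(e_1)\simeq p(e_2)$ and $\eta(E)(e_1)\simeq\eta(E)(e_2)$: the first condition cuts out $E\times_X E$, while the second is exactly the condition of lying over the diagonal $L(\Delta p)$ once we use left exactness of $L$ to rewrite $L(E\times_X E)\simeq LE\times_{LX}LE$ and $L(\Delta p)\simeq\Delta(Lp)$. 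Under this identification $\Delta g$ corresponds to the gap map of the naturality square of $\eta$ at $\Delta p$. Since $p$ is $L$-separated, \cref{def:lseparatedmaps} says that $\Delta p$ is $L$-local, so \eqref{eq:conditionforleftexactloc} forces that gap map to be an equivalence; hence $\Delta g$ is an equivalence and $g$ is a monomorphism.

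Combining the two steps, $g$ is a monomorphism and an effective epimorphism, hence an equivalence, and therefore $p$ is $L$-local by \eqref{eq:conditionforleftexactloc}; this is exactly self-separatedness. I expect the main obstacle to be the bookkeeping in the displayed identification of $E\times_Q E$: one must verify not only that the two pullbacks agree as objects, but that the canonical map $\Delta g$ is carried to the comparison map of the $L$-local square for $\Delta p$, so that the hypothesis can be invoked. This is a careful but routine diagram chase relying only on left exactness of $L$ and the naturality of $\eta$.
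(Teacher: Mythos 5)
Your proof is correct and follows essentially the same strategy as the paper's: show that the relevant comparison map is both a monomorphism (using that $\Delta p$ is $L$-local together with the pullback characterization \eqref{eq:conditionforleftexactloc}) and an effective epimorphism (using quasi-cotopologicality), hence an equivalence. The only difference is organizational: the paper proves the claim for objects, where the comparison map is just $\eta(X)$, and disposes of general maps by passing to slice categories, whereas you treat a general map $p$ directly via the gap map $g\colon E\to X\times_{LX}LE$, at the cost of the extra (but correct) identification of $E\times_Q E$ with $(E\times_X E)\times_{L(E\times_X E)}LE$.
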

\begin{proof}
By the construction of $L_\bullet$ given in \cref{prop:leftexactreflsubmod}, a map in $\E_{/Z}$ is an $L$-equivalence if and only if it is an $(L_1=L)$-equivalence in $\E$. Since $L$ is a quasi-cotopological localization, it follows that, for any $Z\in\E$ and any $p\in\E_{/Z}$, all reflection maps $\eta_Z(p)\colon p\rightarrow L_Z(p)$ are $\infty$-connected. In particular, they are effective epi. We now show that every $L$-separated object is $L$-local, the proof for maps being the same, but done in an appropriate slice category. Suppose that $X\in\E$ is such that $\Delta X$ is $L$-local and let $\eta\colon X\rightarrow LX$ be the reflection map of $X$. Using the definition of $L$-local maps from \cref{prop:leftexactreflsubmod}, since $L(X^2)\simeq (LX)^2$, the hypothesis that $\Delta X$ is an $L$-local map means that there is a pullback square
$$
\bfig
\node x(0,0)[X]
\node lx(500,0)[LX]
\node xx(0,-300)[X^2]
\node lxlx(500,-300)[LX^2]

\arrow[x`lx;\eta]
\arrow[x`xx;\Delta X]
\arrow|r|[lx`lxlx;\Delta (LX)]
\arrow|b|[xx`lxlx;\eta^2]

\place(70,-70)[\angle]
\efig
$$
Hence, the diagonal of $\eta$ is an equivalence, i.e., $\eta$ is a monomorphism. Since $\eta$ is also an effective epimorphism, it is an equivalence and, then, $X$ is $L$-local, as needed.
\end{proof}
It turns out that the quasi-cotopological localizations completely characterize self-separated reflective subfibrations on $\E$.
\begin{theorem}
\label{thm:l=l'char}
The following are equivalent, for a reflective subfibration $L_\bullet$ on $\E$.
\begin{enumerate}
\item $L_\bullet$ is self-separated.
\item $L_\bullet$ is the modality associated to a quasi-cotopological localization of $\E$.
\end{enumerate}
In this case, hypercomplete maps are $L$-local.
\end{theorem}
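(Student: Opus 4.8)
The plan is to prove the two implications separately, disposing of $(2)\Rightarrow(1)$ together with the final clause quickly and concentrating the work on $(1)\Rightarrow(2)$. For $(2)\Rightarrow(1)$ I would simply invoke \cref{prop:cotlocareselfsep}. Once self-separatedness is available — whether assumed, as in $(1)$, or derived, as just indicated — the closing assertion that hypercomplete maps are $L$-local is exactly part of \cref{lemma:selfseplocarehyper}. So both the remaining implication and the last sentence reduce to the hard direction.

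Assume then that $L_\bullet$ is self-separated. First, \cref{lemma:selfseplocarehyper} gives that every $L$-equivalence is $\infty$-connected, which is the connectivity half of \cref{def:cotoploc}. The real content is that $L_1$ is left exact. Since $L_\bullet$ is self-separated, the $L$-separated objects coincide with the $L$-local objects, so $L'$-localization agrees with $L$-localization on objects and \cref{prop:spantol'locspanislequiv} applies with $L'$ replaced by $L$: for any cospan $Y\to X\leftarrow Z$ the comparison map $Y\times_X Z\to LY\times_{LX}LZ$ is an $L_1$-equivalence. Its target is a pullback of a diagram in the reflective subcategory $\D_1$, hence lies in $\D_1$ (reflective subcategories are closed under limits), so it is the $L_1$-localization of $Y\times_X Z$. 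Thus $L_1$ preserves pullbacks, and since it preserves the terminal object, it is left exact. Together with the connectivity observation, $L_1$ is a quasi-cotopological localization, and \cref{prop:leftexactreflsubmod} produces the associated modality $M_\bullet$ with $M_1=L_1$.

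It remains to prove $L_\bullet=M_\bullet$. A reflective subfibration is determined by its classes of local equivalences $S_X$ via \cref{prop:propofreflsub}(i), so it suffices to show $S_X=S^M_X$ for every $X$. Because $M_\bullet$ is the lex modality attached to $L_1$, the remark following \cref{prop:leftexactreflsubmod} characterizes its slice equivalences by $\alpha\in S^M_X\iff\Sigma_X\alpha\in S_1$, and I would establish the same characterization for $L_\bullet$. The forward implication is \cref{lm:lequivandpullbdepsum}(ii). For the converse, suppose $\alpha\colon p\to q$ in $\E_{/X}$ satisfies $\Sigma_X\alpha\in S_1$. Using left exactness of $L_1$ to commute $L_1$ past the fibres of $p$ and $q$, I would show that every fibre $\hofib_x\alpha$ is again an $L_1$-equivalence; since each fibre $\hofib_x r$ of an $r\in\D_X$ is an $L$-local object by \cref{cor:reflsubfinspaces}, \cref{prop:propofreflsub}(v) forces $(\hofib_x r)^{\hofib_x\alpha}$ to be an equivalence, so the internal hom $r^\alpha$ is a fibrewise equivalence and $\alpha\in S_X$. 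This yields $S_X=S^M_X$, hence $\D_X=\D^M_X$, so $L_\bullet=M_\bullet$ is the modality of the quasi-cotopological localization $L_1$.

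The main obstacle is this final converse: reconciling the slicewise equivalences $S_X$ with the base equivalences $S_1$, that is, proving that a self-separated $L_\bullet$ is a \emph{lex} modality. Left exactness of $L_1$ is precisely what makes the fibrewise reduction valid, and it is here that \cref{prop:spantol'locspanislequiv} is indispensable. For a general $\infty$-topos the fibrewise argument over points must be replaced by pullback along a set of $\kappa$-compact generators $A\to X$, as in the remark after \cref{cor:reflsubfinspaces}; left exactness again transports $\Sigma_X\alpha\in S_1$ to $\Sigma_A(g^\ast\alpha)\in S_1$ for each $g\colon A\to X$, reducing the verification that $r^\alpha$ is an equivalence to these generators.
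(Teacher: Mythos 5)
Your treatment of $(2)\Rightarrow(1)$, of the final clause, and of the first half of $(1)\Rightarrow(2)$ matches the paper: quasi-cotopological localizations are self-separated by \cref{prop:cotlocareselfsep}, hypercompleteness and $\infty$-connectedness of $L$-equivalences come from \cref{lemma:selfseplocarehyper}, and left exactness of $L_1$ follows from \cref{prop:spantol'locspanislequiv} once $L'_\bullet=L_\bullet$ is observed (your appeal to closure of the reflective subcategory $\D$ under limits, in place of the paper's remark that self-separatedness forces $L_\bullet$ to be a modality, is fine). The problem is the last step, identifying $L_\bullet$ with the lex modality $M_\bullet$ attached to $L_1$. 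You reduce this to showing $\Sigma_X\alpha\in S_1\Rightarrow\alpha\in S_X$, and your argument for that implication — pass to fibers $\hofib_x\alpha$ over points $x\colon 1\to X$, use left exactness to see each fiber is an $L_1$-equivalence, and use \cref{cor:reflsubfinspaces} plus \cref{prop:propofreflsub}(v) fiberwise — is genuinely only valid for $\E=\infty\Gpd$, where \cref{cor:reflsubfinspaces} holds and equivalences are detected on fibers over global points. Your proposed repair for a general $\infty$-topos is circular: pulling back along a generator $g\colon A\to X$ reduces the claim to showing $g^\ast\alpha\in S_A$ (or that $(g^\ast r)^{g^\ast\alpha}$ is an equivalence in $\E_{/A}$) from $\Sigma_A(g^\ast\alpha)\in S_1$, which is a statement of exactly the same form over the base $A$; since $A$ is not terminal, no induction closes. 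So as written the hard inclusion is not established beyond spaces.

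For comparison, the paper closes this gap differently: it shows directly that every $L$-local map $f\colon X\to Y$ fits into a pullback square against $\eta(X)$, $\eta(Y)$, $Lf$ — i.e., satisfies the defining condition for $M$-locality from \cref{prop:leftexactreflsubmod} — by invoking the pullback characterization of $L$-local maps for an arbitrary reflective subfibration from the companion paper (\cite[Prop.~3.6]{l'loc}), and then checking that the comparison map $L_{LY}(X)\to LX$ is an $L_1$-equivalence between $L$-local objects, hence an equivalence. The converse inclusion ($M$-local implies $L$-local) is immediate since such an $f$ is a pullback of a map between $L$-local objects. If you want to keep your $S_X$-based strategy, you would need some external input of this kind (or a genuinely fibration-level argument) rather than a pointwise/generator-wise reduction.
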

\begin{proof}
\cref{prop:cotlocareselfsep} is the statement that $(2)\implies (1)$. Thus, we need to show that $(1)\implies (2)$. For any reflective subfibration $L_\bullet$, given composable maps $f$ and $g$, if $f$ is $L$-separated and $g$ is $L$-local, then $f\circ g$ is $L$-separated, by \cite[Lemma 3.1]{l'loc}. Thus, if every $L$-separated map is $L$-local, $L_\bullet$ is a modality. By \cref{prop:spantol'locspanislequiv}, given maps $X\rightarrow Z\leftarrow Y$ in $\E$, the canonical map $X\times_Z Y \rightarrow LX\times_{LZ} LY$ is an $L$-equivalence and $LX\times_{LZ} LY$ is an $L$-local object, because $L_\bullet$ is a modality. This means that $ LX\times_{LZ} LY\simeq L(X\times_Z Y)$, so $L=L_1\colon\E\rightarrow\D\hookrightarrow \E$ is left exact. On the other hand,  if $L_\bullet$ is self-separated, \cref{lemma:selfseplocarehyper} says that every $L$-equivalence is $\infty$-connected. Therefore, $L\colon\E\rightarrow\E$ is a quasi-cotopological localization of $\E$. To conclude, we show that a map $f\colon X\rightarrow Y$ is $L$-local if and only if there is a pullback square
\begin{equation}
\label{eq:auxsqforl=l'}
\tag{$\bigtriangleup$}
\bfig
\square<400,250>[X`LX`Y`LY;\eta(X)`f`L(f)`\eta (Y)]
\efig
\end{equation}
If this is the case, then the local maps of $L_\bullet$ are the same as the local maps of the reflective subfibration associated to $L_1$, by \cref{prop:leftexactreflsubmod}, and therefore the two reflective subfibrations are the same. Suppose that $f\colon X\rightarrow Y$ is $L$-local. By \cite[Prop.~3.6]{l'loc}, there is a pullback square
$$
\bfig
\square<800,350>[X`L_{LY}(X)`Y`LY;\eta_{LY}(\eta(Y)f)`f`L_{LY}(\eta(Y)f)`\eta (Y)]
\place(70,230)[\angle]
\efig
$$
Since $Lf\in\D_{LY}$, there is a unique $\phi\colon L_{LY}(\eta(Y)f)\rightarrow Lf$ with $\phi\eta_{LY}(\eta(Y)f)=\eta(X)$ and $(Lf)\phi=L_{LY}(\eta(Y)f)$. Since $\eta_{LY}(\eta(Y)f)$ and $\eta(X)$ are $L_1$-e\-quiv\-a\-lences, so is $\phi$. Since $L_\bullet$ is a modality, $LX$ and $L_{LY}(X)$ are $L$-local objects, and then $\phi\colon L_{LY}(X)\rightarrow LX$ is an equivalence. It follows that \eqref{eq:auxsqforl=l'} is a pullback square.
\end{proof}

\bibliographystyle{amsalpha2} 
\bibliography{Locinaninftytopos.bib}

\end{document}